\documentclass[12pt]{article}

\usepackage{amsmath}
\usepackage{amsfonts}
\usepackage{amsthm}
\usepackage{geometry}
\usepackage{mathrsfs}
\usepackage{amssymb}
\usepackage[percent]{overpic}
\usepackage{bm}
\usepackage[all]{xy}
\usepackage{graphicx}
\usepackage{subfigure}
\usepackage{latexsym}
\usepackage{epigraph}
\usepackage{hyperref}
\usepackage{fancyhdr}
\usepackage{comment}

\usepackage{mathtools}
\mathtoolsset{showonlyrefs}

\numberwithin{equation}{section}
\geometry{left=3cm,right=3cm,top=2.5cm,bottom=2.5cm}

\bibliographystyle{alpha}

\usepackage{color}

\newtheorem{theorem}{Theorem}[section]

\newtheorem*{thma}{Theorem A}

\newtheorem{proposition}{Proposition}[section]
\newtheorem*{obs}{Observation}

\newtheorem{lemma}{Lemma}[section]
\theoremstyle{remark}
\newtheorem{remark}{Remark}[section]

\newtheorem*{ack}{Acknowledgement}

\def\re{\operatorname{Re}}
\def\Im{\operatorname{Im}}
\def\area{\operatorname{area}}

\def\res{\operatorname{Res}}
\def\logarea{\operatorname{Logarea}}
\def\sing{\operatorname{Sing}}
\def\dens{\operatorname{dens}}

\def\diam{\operatorname{diam}}

\def\dim{\operatorname{dim}}
\def\hc{\operatorname{\widehat{\mathbb C}}}
\def\c{\operatorname{\mathbb C}}
\def\R{\operatorname{\mathbb R}}

\def\ud{\operatorname{\mathbb D}}
\def\dim{\operatorname{dim}}
\def\h{\operatorname{\mathbb{H}}}
\def\j{\operatorname{\mathcal{J}}}
\def\f{\operatorname{\mathcal{F}}}
\def\I{\operatorname{\mathcal{I}}}
\def\b{\operatorname{\mathcal{B}}}
\def\s{\operatorname{\mathcal{S}}}
\def\sing{\operatorname{Sing}}

\begin{document}
\title{Hausdorff dimension of escaping sets of meromorphic functions}
\author{Magnus Aspenberg and Weiwei Cui}
\date{}
\maketitle

\begin{abstract}
We give a complete description of the possible Hausdorff dimensions of escaping sets for meromorphic functions with a finite number of singular values. More precisely, for any given $d\in [0,2]$ we show that there exists such a meromorphic function for which the Hausdorff dimension of the escaping set is equal to $d$. The main ingredient is to glue together suitable meromorphic functions by using quasiconformal mappings. Moreover, we show that there are uncountably many quasiconformally equivalent meromorphic functions for which the escaping sets have different Hausdorff dimensions.

\medskip
\noindent\emph{2020 Mathematics Subject Classification}: 37F10, 30D05 (primary), 37F31, 30D30 (secondary).

\medskip
\noindent\emph{Keywords}: Meromorphic functions, Weierstra{\ss} elliptic functions, escaping sets, Hausdorff dimension, quasiconformal mappings.

\end{abstract}

\section{Introduction and main results}\label{imr}

In this paper we are considering transcendental meromorphic functions in the plane. If $f$ is such a function, denote by $\f(f)$ the Fatou set of $f$ and $\j(f)$ the Julia set of $f$.  See \cite{bergweiler1} for an introduction and basic results. The fundamental \emph{escaping set} is defined as 
$$\I(f)=\left\{\,z\in\c:\,f^{n}(z)\to\infty\,\text{~as~}\, n\to\infty\, \right\},$$
where $f^n(z)$ is the $n$-th iterate of $z$ under $f$. 
Eremenko first studied this set in the dynamics of transcendental entire functions \cite{eremenko3}. In particular, he proved that $\I(f)$ is non-empty and $\j(f)=\partial\I(f)$. These results were later generalised to transcendental meromorphic functions by Dom\'inguez \cite{dominguez1}.

\smallskip
The escaping set has been explored from various perspectives. We will consider this set from the point of view of the Hausdorff dimension. McMullen proved in \cite{mcmullen11} that the Julia set of the map $f(z) = \sin(\alpha z+\beta)$ has positive Lebesgue measure for $\alpha\neq0$, and that the Hausdorff dimension of the Julia set of the map $f(z)=\lambda e^z$ is equal to two for $\lambda\neq 0$. His result actually holds for the escaping sets. Note that for these functions, $\I(f)\subset\j(f)$; see \cite{eremenko2}. The first result of McMullen was later extended to more general maps; see \cite{aspenberg1, cuiwei1} and also \cite{eremenko2}. Bara\'nski and Schubert independently generalised the second result of McMullen \cite{baranski1, schubert}.  To state their result, first we recall that the singular set $\sing(f^{-1})$ of a meromorphic function $f$ is the set of critical and asymptotic values of $f$. The \emph{Eremenko-Lyubich} class, denoted by $\b$, is defined by
$$\b:=\left\{\,f:\c\to\hc\, \text{transcendental and meromorphic},\,\sing(f^{-1})\cap\c\, \text{is bounded}\, \right\}.$$
We call these functions \emph{Eremenko-Lyubich functions}, which received much interest in transcendental dynamics recently. See \cite{sixsmith6} for a survey of dynamics of entire functions in the class $\b$. The result of Bara\'nski and Schubert mentioned above can be stated as follows: If $f\in\b$ is entire and of finite order, then $\I(f)$ has Hausdorff dimension two. Their argument can actually be used to show that the same conclusion holds if $f\in\b$ is meromorphic and of finite order for which $\infty$ \emph{is} an asymptotic value. To see this, one can consider the set of points escaping to $\infty$ in the logarithmic tracts over $\infty$ and compute the Hausdorff dimension of this set by using the argument of Bara\'nski and Schubert to get the conclusion. In contrast with this, Bergweiler and Kotus proved that the escaping set may have Hausdorff dimension strictly less than two if $\infty$ is \emph{not} an asymptotic value  \cite{bergweiler2}. This happens, in particular, if the multiplicities of poles are uniformly bounded above.  Moreover, they even give a complete characterisation for the Hausdorff dimensions of escaping sets for Eremenko-Lyubich functions; see \cite[Theorem 1.2]{bergweiler2}. Here and in the following, $\dim A$ stands for the Hausdorff dimension of a set $A$. 

\begin{thma}[Bergweiler, Kotus]\label{thma}
$$\left\{\,\dim\I(f):\,f\in\b\,\right\}=[0,2].$$
\end{thma}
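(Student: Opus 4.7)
The containment $\{\dim\I(f):f\in\b\}\subseteq[0,2]$ is immediate, since $\I(f)\subset\c$ has Hausdorff dimension at most $2$. The task is therefore to produce, for every $d\in[0,2]$, some $f\in\b$ with $\dim\I(f)=d$. I would split the target interval into three regions and treat them by different constructions.

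For $d=2$ one can appeal to the Bara\'nski--Schubert theorem already quoted: any finite-order $f\in\b$ for which $\infty$ is an asymptotic value (the classical example being $\lambda e^z$) satisfies $\dim\I(f)=2$. At the opposite extreme, to reach small dimensions I would exploit the mechanism that Bergweiler and Kotus emphasise in the sentences preceding Theorem A: if $\infty$ is \emph{not} an asymptotic value and the multiplicities of the poles are uniformly bounded by some $q\geq 1$, then an orbit $z,f(z),f^2(z),\ldots$ escapes only by making ever-closer approaches to the pole set. Near a pole of multiplicity $q$ the function behaves like $c/(z-p)^q$, so each step amplifies distances by a power $q$; the set of initial points that sustain the geometric approach required to push the orbit to infinity is thin, and its Hausdorff dimension can be read off from a Bowen-type formula whose exponent involves $q$.

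Concretely, the plan is to build a one-parameter family $f_t\in\b$, $t\in[0,1]$, so that $t\mapsto\dim\I(f_t)$ depends continuously on the parameter and interpolates between a value close to $0$ (take poles of the smallest feasible multiplicity, distributed sparsely) and a value close to $2$ (take poles of high multiplicity, concentrated densely, or allow $\infty$ to become an asymptotic value in the limit). Natural candidates are sums of translated power kernels $\sum_{n} a_n/(z-b_n)^{p_n}$ with $\{(a_n,b_n,p_n)\}$ chosen periodically or quasi-periodically; modifications of Weierstra{\ss}-type elliptic functions fit this template. Computing $\dim\I(f_0)$ and $\dim\I(f_1)$ at the two endpoints and invoking the intermediate value theorem then fills the remaining interval and, together with the extreme cases, yields all of $[0,2]$.

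The main obstacle is carrying out the Hausdorff dimension computation sharply. The upper bound on $\dim\I(f_t)$ is the more tractable side: one covers $\I(f)$ by inverse-image components of annuli around $\infty$, uses Koebe-style distortion to control the diameter of each component in terms of its image, and sums a geometric series whose exponent is governed by the maximum pole multiplicity. The lower bound is harder and would require exhibiting a large invariant hyperbolic subset inside $\I(f)$, organised by a symbolic coding of the sequence of poles visited, and then identifying its Hausdorff dimension via the Bowen formula applied to a suitable pressure function. The accompanying technical points are the continuity of $t\mapsto\dim\I(f_t)$ under the chosen perturbation and the verification that the examples genuinely lie in $\b$, with $\infty$ not appearing as an asymptotic value until one actively wants it to.
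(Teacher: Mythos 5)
Your overall framing is sound: the containment $\{\dim\I(f):f\in\b\}\subseteq[0,2]$ is trivial, $d=2$ is handled by Bara\'nski--Schubert (any finite-order $f\in\b$ with a logarithmic singularity over $\infty$), and realising $d\in(0,2)$ must come from class-$\b$ functions with no asymptotic value at $\infty$ and poles of bounded multiplicity. Your choice of building blocks, infinite sums of the form $\sum_n a_n/(z-b_n)^{p_n}$, is in fact essentially the route Bergweiler and Kotus took in \cite{bergweiler2}; by contrast, the present paper proves the stronger Speiser-class version by an entirely different mechanism, a quasiconformal surgery gluing Weierstra{\ss} elliptic functions along the real axis (Sections \ref{CS}--\ref{eses}), and Theorem A is quoted, not re-proved, here.

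The genuine gap is the intermediate-value step. You propose to compute $\dim\I(f_0)$ and $\dim\I(f_1)$ at the two endpoints of a one-parameter family and invoke the IVT, but this requires establishing that $t\mapsto\dim\I(f_t)$ is continuous, which you do not do and which is far from automatic: Hausdorff dimension is a fragile invariant in parametrised families, and indeed Theorem \ref{thm3} of this very paper shows it is not even invariant under quasiconformal equivalence. What both Bergweiler--Kotus and this paper actually do is bypass that difficulty by computing the dimension \emph{explicitly} as a function of the order. For the functions built here, with all poles double, one gets $\dim\I(f)=2\rho/(1+\rho)$ where $\rho=\rho(f)$ is the order (Theorem \ref{thm51}); the map $\rho\mapsto 2\rho/(1+\rho)$ is elementary and manifestly surjective onto $(0,2)$, so the ``IVT'' is applied to a known algebraic formula, not to the dimension itself. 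Your statement that ``each step amplifies distances by a power $q$'' is also too coarse: the exponent in the dimension formula involves both the pole multiplicity and the order $\rho$, which encodes the density of poles, and these enter separately (in \cite{bergweiler2}, Theorem 1.1 gives $2M\rho/(2+M\rho)$ with $M$ the maximal multiplicity). Finally, a smaller point: for the lower bound the paper uses McMullen's density lemma (Theorem \ref{mcmlower}) rather than a Bowen pressure formula; the latter would require setting up a compact hyperbolic invariant set and thermodynamic formalism, which is considerably more work in the escaping-set context, whereas McMullen's estimate applies directly to the nested cover by components of $f^{-\ell}(B(R))$ that the problem naturally produces. The endpoint $d=0$ also needs a separate explicit zero-order example (both sources construct one); it does not arise as a limit of a continuous family without further argument.
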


\medskip
Another class of functions which has attracted much attention is the so-called \emph{Speiser class},
$$\s:=\left\{\,f:\c\to\hc\, \text{transcendental and meromorphic},\,\sing(f^{-1})\, \text{is finite} \,\right\}.$$
Functions in this class are called \emph{Speiser functions}. This is a more restrictive class than the class $\b$. In some sense, Speiser functions stand in between rational functions and general meromorphic functions, and thus have attracted attention from, for instance, Nevanlinna theory \cite{teichmuller2} and also transcendental dynamics \cite{eremenko2}. A good understanding of Speiser functions will give some insight into the understanding of general meromorphic functions. However, there are striking differences between these two classes. In case of entire functions, the significant difference is addressed recently by Bishop in \cite{bishop2, bishop5}. 

The main purpose of this paper is to compare these two classes in the meromorphic setting in terms of the Hausdorff dimension of their escaping sets. To be more specific, our intention is to show that in the above Theorem A, functions can actually be taken in the smaller class $\s$.

\begin{theorem}\label{tm}
	$$\left\{\,\dim\I(f):\,f\in\s\,\right\}=[0,2].$$
\end{theorem}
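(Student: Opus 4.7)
The plan is to upgrade Theorem A to the Speiser class by a quasiconformal gluing construction, using Weierstra\ss{} elliptic functions as the source of tractable Speiser building blocks. The endpoints and several intermediate values of $[0,2]$ are already realized in $\s$. The value $d=2$ is obtained, for instance, by any $f\in\s$ of finite order having $\infty$ as an asymptotic value, to which the Bara\'nski--Schubert argument applies. For small values of $d$ and as a general template I would use the Weierstra\ss{} $\wp$-function, which has only three critical values and no asymptotic values and hence lies in $\s$; the escaping set dimension for such functions can be controlled via pole multiplicities, in the spirit of Bergweiler--Kotus.

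For an arbitrary $d\in(0,2)$, I would start from a Bergweiler--Kotus function $g\in\b$ with $\dim\I(g)=d$ supplied by Theorem A. The dimension of $\I(g)$ is encoded in the local geometry of the tracts over the poles of $g$, not in its (possibly infinite) singular set. I would excise a neighbourhood of infinity from a Speiser template $f_0\in\s$ and graft in the pole--tract structure of $g$ via a quasiconformal homeomorphism $\varphi$ whose dilatation is supported in a thin annular gluing region. Pulling back the standard complex structure by $\varphi$ yields a Beltrami coefficient $\mu$ compatible with the candidate map; solving $\bar\partial\psi=\mu\,\partial\psi$ through the Measurable Riemann Mapping Theorem produces a meromorphic function $f$. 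Because the gluing occurs in one bounded annulus and the template has only finitely many singular values, the singular set of $f$ is finite, so $f\in\s$.

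To conclude, I would show $\dim\I(f)=d$ by transferring the two-sided dimension bounds for $g$ across the quasiconformal conjugacy $\psi$. The upper bound follows from a McMullen-style covering argument together with the H\"older continuity of $\psi$. For the lower bound, one exploits that $\psi$ is conformal off the gluing strip, so its restriction to the tracts where escaping points accumulate is essentially affine, and the distortion estimates of Bergweiler--Kotus carry over. The main obstacle is balancing two competing demands in the surgery: the dilatation of $\varphi$ must be small enough, and its support thin enough, to preserve the sharp dimension bound coming from $g$, while simultaneously the gluing must absorb or discard all but finitely many singular values of $g$ so that $f$ remains in $\s$. Making this balance work uniformly in $d$, especially near the extreme values where either the escaping dynamics is very thin or the tract structure is very intricate, is the central technical challenge I expect to dominate the proof.
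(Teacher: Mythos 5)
Your strategy is genuinely different from the paper's, and it contains a gap at exactly the point you flag as the central difficulty. You propose to take a Bergweiler--Kotus function $g\in\b$ with $\dim\I(g)=d$ and graft its pole--tract structure near infinity onto a Speiser template $f_0$. But the dimension of $\I(g)$ is governed by the global distribution of poles and the local geometry of infinitely many pole tracts spread throughout the plane; the surgery region cannot be ``one bounded annulus'' while simultaneously replacing the large-scale behaviour of $f_0$ by that of $g$. More importantly, the singular values of $g$ (critical values coming from the infinitely many critical points inside the pole tracts and fundamental domains) do not live in a bounded set that can be excised: if the grafted piece behaves like $g$ on an unbounded region, the resulting map $f$ inherits infinitely many critical values and is therefore not in $\s$. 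You acknowledge that one must ``absorb or discard all but finitely many singular values of $g$,'' but this is precisely the step that would require an entirely new construction; it is not a balancing act inside a quasiconformal gluing but a structural obstruction, since quasiconformal conjugation cannot reduce the number of singular values.

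The paper avoids this issue by never starting from a class-$\b$ function at all. It builds Speiser functions from scratch by gluing two Weierstra{\ss} elliptic functions --- which are themselves Speiser with exactly four critical values and no asymptotic values --- along an unbounded strip or curve of \emph{finite logarithmic area}, so that the Teichm\"uller--Wittich--Belinskii theorem controls the asymptotics and hence the order. Precomposing with a power map $z\mapsto z^{\alpha/2\pi}$ or postcomposing with $z\mapsto z^N$ then produces any prescribed order $\rho\in(0,\infty)$, and the explicit pole distribution lets them compute $\dim\I(f)=2\rho/(1+\rho)$ directly via McMullen's estimate for the lower bound and the Bergweiler--Kotus inequality for the upper bound. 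The endpoints $d=0$ and $d=2$ are handled by separate explicit examples ($\wp\circ\cosh^{-1}$ of order $0$, and $\wp(e^z+c)$ of infinite order), and in particular the $d=2$ example is chosen so that $\infty$ is \emph{not} an asymptotic value, which your suggested use of Bara\'nski--Schubert would not give. The key structural contrast is that the paper's surgery only ever combines finitely many Speiser building blocks, so finiteness of the singular set is automatic, whereas your proposal would need to destroy infinitely many singular values, which quasiconformal surgery cannot do.
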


This gives a complete description of the possible Hausdorff dimensions of the escaping sets of Speiser functions, which also strengthens the above Theorem A. To prove our result, we will construct Speiser functions by using quasiconformal mappings. This is quite different from the method used in \cite{bergweiler2}, in which the authors proved Theorem A by considering suitable infinite sums.

As shown by \cite[Theorem 1.1]{bergweiler2}, for $f\in\b$ of finite order for which $\infty$ is not an asymptotic value, if the multiplicities of poles are uniformly bounded, the Hausdorff dimension of escaping sets will in some sense depend on the order of the function. In particular, to obtain escaping sets of very small Hausdorff dimensions, say, close to zero, one will need Speiser functions of very small orders. However, the Denjoy-Carleman-Ahlfors theorem tells that, if a Speiser function has an asymptotic value, the (lower) order of the function is at least $1/2$; see \cite{goldbergmero}. Therefore, to achieve small orders, the desired Speiser functions cannot have any asymptotic values. This will in fact be a common property for all Speiser functions we construct in this paper. To be more precise, to prove Theorem \ref{tm} we will need to construct functions with properties shown in the following theorem.

\begin{theorem}\label{thm1}
	For any given number $d\in [0,2)$, there exists a meromorphic function $f\in\s$ satisfying the following properties.
	\begin{itemize}
	\item[$(1)$] $f$ has no asymptotic values in $\hc$.
	\item[$(2)$] All poles of $f$ have multiplicity $2$.
	\item[$(3)$] The order of $f$ is $\dfrac{d}{2-d}$;
	\item[$(4)$] $\dim\I(f)=d$.
	\end{itemize}
\end{theorem}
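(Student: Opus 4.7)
The plan is to build, for each target dimension $d\in[0,2)$, a quasiregular ``model'' function $F$ and then to straighten it, via the measurable Riemann mapping theorem, to a genuinely meromorphic function $f\in\s$ satisfying all four properties. The natural building block is the Weierstra{\ss} elliptic function $\wp$, since it is already meromorphic of order $2$ with exclusively double poles and only four critical values, and crucially it has no asymptotic values. Rescaled copies of $\wp$ therefore supply the local pole structure demanded by item $(2)$ and automatically ensure item $(1)$ provided the gluing is arranged on the target (value) side.

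First, I would fix a fundamental domain $Q_{0}$ for $\wp$ (chosen so that $\wp$ maps $Q_{0}$ onto $\hc$ minus the four critical values) and select a discrete family of disjoint ``pole regions'' $Q_{n}\subset\c$, each similar to $Q_{0}$, with carefully chosen centres $a_{n}$ and scales $s_{n}$. The radii must be tuned so that the pole counting function $n(r)$ grows like $r^{2\rho}$ with $\rho=d/(2-d)$; this is what enforces item $(3)$. On each $Q_{n}$ the model $F$ is defined as $\wp$ composed with the natural affine identification $Q_{n}\to Q_{0}$, so each $Q_{n}$ contributes a single double pole and the critical values stay in the fixed four-point set.

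Next I would carry out the quasiconformal gluing on thin annular transition zones surrounding each $Q_{n}$: on each annulus, $F$ is defined by interpolating between the rescaled $\wp$-piece and a fixed normal form at infinity, the interpolation being performed on the value side into the common fundamental domain whose corners are the four critical values of $\wp$. This guarantees that no new critical or asymptotic values appear, so the quasiregular $F$ already has only four singular values. The Beltrami coefficient $\mu_{F}$ is supported on the union of the transition annuli and satisfies $\|\mu_{F}\|_\infty<1$, so the measurable Riemann mapping theorem yields a quasiconformal $\varphi:\c\to\c$ with $f:=F\circ\varphi^{-1}$ meromorphic. Quasiconformal equivalence preserves the singular set, so $f\in\s$ with no asymptotic values and double poles only; by shrinking the transition annuli fast enough one arranges that $\varphi$ is conformal at $\infty$ in the relevant asymptotic sense, so the order of $f$ equals that of $F$, namely $\rho$.

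Finally, I would compute $\dim\I(f)=d$ by adapting the Bergweiler--Kotus methods of \cite{bergweiler2}: because $\infty$ is not an asymptotic value and all poles are double, the escaping set is the nested intersection of preimages of shrinking neighbourhoods of poles, and its Hausdorff dimension is governed by the exponent of convergence of the pole sequence, which by construction is $2\rho$ and produces $\dim\I(F)=d$; Hausdorff dimension is then transferred from $F$ to $f$ through the H\"older quasiconformal conjugacy $\varphi$. The principal difficulty I anticipate is the simultaneous balancing of three requirements on the transition annuli: they must be thin enough that the distortion of $\varphi$ does not inflate the order of $f$ beyond $\rho$; geometrically clean enough that the quasiregular interpolation introduces no extra singular values, keeping $f$ inside $\s$; and separated enough from the pole clusters $Q_{n}$ that the Bergweiler--Kotus-style dimension estimate for $F$ remains valid. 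Orchestrating this global quasiconformal gluing against the sharp dimension/order bookkeeping is the technical heart of the argument.
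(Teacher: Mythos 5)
Your high-level strategy --- build a quasiregular model from Weierstra{\ss} pieces, straighten it via the measurable Riemann mapping theorem, tune the pole density to control the order, and then run a Bergweiler--Kotus/McMullen-type estimate --- is indeed the strategy the paper follows, and several of the technical concerns you flag (keeping the Beltrami support thin, keeping the straightening map conformal at $\infty$, avoiding extra singular values) are exactly the right ones. But the specific gluing mechanism you propose has gaps that are not cosmetic, and the paper resolves them in a rather different way.

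First, the ``fixed normal form at infinity'' you want to interpolate against on the transition annuli is the crux, and you leave it unspecified. If $F$ agrees with some simple model (a constant, or a slowly varying function) on the complement of $\bigcup_n Q_n$, that model creates an asymptotic value, violating property~(1); if instead the $Q_n$ are meant to tile the plane so that no ``outside'' exists, then you are not doing a local per-region surgery at all, and the transition annuli become the full one-skeleton of the tiling --- at which point the construction needs a global organizing principle that you do not supply. The paper sidesteps this by never introducing isolated pole regions: it takes a \emph{single} global change of variables $h(z)=z^{\rho/2}$ on each half-plane, feeds the result into a fixed Weierstra{\ss} function, and glues only along one seam (the real axis, respectively two rays). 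The pole density, and hence the order $\rho$, comes out of the Jacobian of $h$, not from per-tile rescaling, and the Beltrami support is a single strip with finite logarithmic area so that Teichm\"uller--Wittich--Belinskii applies. Second, your claim that ``interpolating on the value side into the common fundamental domain'' guarantees no new critical or asymptotic values is not justified: a quasiregular interpolation can have branch points wherever its local degree exceeds one, regardless of where the image lies, and you give no mechanism to exclude this. The paper avoids the issue structurally: its map $G$ is always of the form $\wp_i\circ(\text{piecewise }\mathcal{C}^1\text{ quasiconformal map})$, i.e.\ the surgery is done entirely by \emph{pre}-composition, which cannot alter the singular set of $\wp_i$; the only work is to make the pre-compositions match up continuously along the seam. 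Third, the final step of ``transferring $\dim\I(F)$ to $\dim\I(f)$ through the H\"older quasiconformal conjugacy'' does not work as stated --- H\"older/quasiconformal distortion bounds can inflate or deflate Hausdorff dimension and cannot give equality. The paper instead proves that the straightening map $\phi$ satisfies $\phi(z)=z+o(z)$ at $\infty$ (Lemma~\ref{logareaes}), uses this to derive the precise local model $f(w)\sim(b(w)/(w-w_0))^2$ with $|b(w_0)|\sim|w_0|^{1-\rho/2}$ near each pole (equations \eqref{repole}, \eqref{res3}, \eqref{res4}), and then runs the McMullen estimate (Theorem~\ref{mcmlower}) directly on $f$, not on $F$. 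Finally, two small slips: for order $\rho$ you want $n(r)\asymp r^\rho$, not $r^{2\rho}$, and the exponent of convergence of the pole sequence is $\rho$, not $2\rho$; also, the case $d=0$ (where your $z^{\rho/2}$-type rescaling degenerates) needs a separate example, which the paper supplies via $f=\wp\circ\cosh^{-1}$.
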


Notice that the above theorem gives escaping sets of Hausdorff dimension strictly less than $2$. To achieve a Speiser function with a full dimensional escaping set, by \cite[Theorem 1.1]{bergweiler2}, one can consider certain functions with infinite order or functions with finite order and unbounded multiplicities of poles. Here we provide a function with the former property. See Section \ref{efd} for a discussion on other functions whose escaping sets also have Hausdorff dimension two. 

\begin{proposition}\label{full}
Let $\wp$ be a Weierstra{\ss} elliptic function with respect to a certain lattice and let $c$ be a number chosen such that it is not a pole of $\wp$. Let $f(z)=\wp(e^z+c)$. Then $\dim\I(f)=2.$
\end{proposition}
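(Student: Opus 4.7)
The plan is to establish $\dim \I(f) \geq 2$; the upper bound is trivial. First I would verify that $f \in \s$ by computing its singular set. With $g(z) = e^z + c$, the derivative $f'(z) = \wp'(g(z))\, e^z$ vanishes exactly where $g(z)$ is a half-period of $\wp$, so the critical values of $f$ are the three values $\{e_1, e_2, e_3\}$. If $\gamma:[0,1) \to \c$ is a curve with $\gamma(t) \to \infty$ and $f(\gamma(t)) \to v \in \c$, then $g(\gamma(t))$ must converge to some point in $\wp^{-1}(v)$; since $e^{\gamma(t)}$ only has a finite limit when $\re \gamma(t) \to -\infty$, this forces $g(\gamma(t)) \to c$ and $v = \wp(c)$, the unique finite asymptotic value. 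Similarly $\infty$ is not asymptotic, since $c \notin \Lambda$ prevents $g(\gamma)$ from tending to a pole of $\wp$ along a curve escaping to $\infty$. Hence $\sing(f^{-1}) = \{e_1, e_2, e_3, \wp(c)\}$ is finite.

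Next I would analyse the tract structure. The function $f$ has double poles at $z^*_{\omega, k} = \log(\omega - c) + 2\pi i k$ for $\omega \in \Lambda$ and $k \in \mathbb{Z}$, with local expansion $f(z) = (\omega - c)^{-2}(z - z^*_{\omega, k})^{-2}(1 + o(1))$. Taking $R > \max\{|e_1|, |e_2|, |e_3|, |\wp(c)|\}$, each component of $f^{-1}(\{|w| > R\})$ is a topological disk around one such pole, of radius $\asymp (|\omega - c|\sqrt{R})^{-1}$, on which $f$ restricts to a $2$-to-$1$ branched cover onto $\{|w| > R\}$. Counting lattice points in $\{|\omega - c| \leq e^r\}$ yields $n(r, f) \gtrsim e^{2r}$, so $f$ has infinite order.

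To prove $\dim \I(f) \geq 2$, I would adapt the McMullen/Bergweiler--Kotus construction used for the sharpness direction of \cite[Theorem 1.2]{bergweiler2}, building a nested Cantor tower of tracts $T_{i_1} \supset T_{i_1, i_2} \supset \cdots$, where each $T_{i_1, \dots, i_n}$ is a component of $f^{-n}(\{|w| > R_n\})$ and the thresholds $R_n \nearrow \infty$ are chosen slowly. The branching factor at level $n$ is the number of poles $z^*_{\omega, k}$ lying inside $\{|w| > R_n\}$, which grows doubly exponentially in $R_n$ thanks to the lattice count $\#\{\omega \in \Lambda : |\omega - c| \lesssim e^{R_n}\} \asymp e^{2R_n}$. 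Koebe distortion estimates on the annular collar around each tract keep inverse branches of $f^n$ nearly conformal on definite subdisks, and McMullen's covering lemma then converts the branching count into a Hausdorff dimension bound approaching $2$. The main technical obstacle is controlling the compounded distortion along arbitrarily long tract chains so that the resulting Cantor set realises the dimension predicted by pole-counting; once this is done, the infinite order of $f$ is precisely what allows the dimension to reach $2$, in contrast with the finite-order regime covered by \cite[Theorem 1.1]{bergweiler2}.
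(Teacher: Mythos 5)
Your proposal is correct and follows essentially the same route as the paper's proof: both compute the residues at the poles $z^*_{\omega,k}=\log(\omega-c)+2\pi i k$, use Koebe estimates to control tract diameters, count poles via the lattice count $\#\{\omega\in\Lambda : |\omega - c| \leq e^r\} \asymp e^{2r}$ (the paper does this in vertical squares $P(s)$ rather than disks), and invoke McMullen's covering lemma (Theorem~\ref{mcmlower}) with a slowly increasing sequence of thresholds $R_k$ to build a full-dimensional escaping Cantor set inside $\I(f)$. One small slip worth noting: the branching count grows \emph{exponentially} in $R_n$, not ``doubly exponentially''---the quantity you cite is $e^{2R_n}$---though this does not affect the argument.
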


The proof of Theorem \ref{thm1} together with Proposition \ref{full} is provided in section \ref{eses}.

\bigskip
Our second result will deal with the question concerning the invariance of the Hausdorff dimensions of escaping sets in the parameter space. More precisely, two meromorphic functions $f$ and $g$ are \emph{quasiconformally equivalent} if there exist quasiconformal mappings $\varphi,\,\psi$ of the plane such that $\varphi\circ f=g\circ \psi$; see \cite[Section 3]{eremenko2}. (They are \emph{topologically equivalent} if instead of quasiconformal mappings, one takes only homeomorphisms.) Functions satisfying this equivalence relation are considered to belong to the same parameter space. (In case of Speiser functions, this parameter space is a finite dimensional complex manifold.)

Then the question mentioned above can be stated as follows: Let $f, g\in\b$ be quasiconformally equivalent, do their escaping sets have the same Hausdorff dimension? The question was mentioned in \cite{rempe11} and in \cite{bergweiler10} for transcendental entire functions. Moreover, it is proved in \cite{rempe11} that if two such entire functions are affinely equivalent, then their escaping sets have the same Hausdorff dimension. 

There are quasiconformally but not affinely equivalent transcendental entire functions in class $\b$ whose escaping sets have the same Hausdorff dimension. Such examples were constructed in \cite{epstein5,bishop6}. The escaping sets of these functions have the same Hausdorff dimension by the result of Bara\'nski and Schubert mentioned before, but they are not affinely equivalent since they have different orders. In the meromorphic setting, some positive results towards the question are also known. For example, Ga\l{}azka and Kotus proved that the Hausdorff dimension of escaping sets of some simply periodic functions and all doubly periodic functions depend only on the multiplicities of poles, which cannot be changed under equivalence relation and thus the dimension is invariant \cite{kotus2, kotus6}. Also in \cite{cuiwei2}, meromorphic functions with rational Schwarzian derivatives are shown to have the invariance property. However, despite of these results our construction of functions in Theorem \ref{thm1} above gives profound counterexamples even in the Speiser class.

\begin{theorem}\label{thm3}
There exist uncountably many quasiconformally equivalent Speiser meromorphic functions whose Hausdorff dimensions of escaping sets are different.
\end{theorem}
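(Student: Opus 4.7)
The plan is to extract from the proof of Theorem~\ref{thm1} a one-parameter family $(f_d)_{d\in[0,2)}$ of Speiser functions all lying in a single quasiconformal equivalence class, and then simply read off that $d\mapsto \dim\I(f_d)=d$ realises uncountably many distinct values. Since $[0,2)$ is uncountable, this immediately yields the theorem.

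First, I would look closely at the construction behind Theorem~\ref{thm1}. As hinted in the abstract and in the preamble to Theorem~\ref{thm1}, the functions $f_d$ are produced by quasiconformally gluing a fixed combinatorial model (built around a Weierstra{\ss} $\wp$-function, so as to produce the double poles and a finite singular set with no asymptotic values) to outer pieces whose scale is controlled by a parameter that ultimately determines the order $d/(2-d)$. The key point is that the \emph{topological data} of the model, namely the set and cyclic arrangement of critical values, the line complex over $\hc\setminus \sing(f^{-1})$, the pole multiplicities, and the absence of asymptotic values, does not depend on $d$. Only the conformal (in fact quasiconformal) scaling used in the gluing varies with $d$.

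Second, I would promote this common combinatorial structure to a genuine quasiconformal equivalence. Writing the construction schematically as $f_d=\Phi_d\circ F\circ \Psi_d^{-1}$, where $F$ is a fixed (non-Speiser) meromorphic model, and $\Phi_d,\Psi_d$ are planar quasiconformal maps produced by the surgery, one obtains for any $d,d'\in[0,2)$ the identity
\begin{equation}
(\Phi_d\circ\Phi_{d'}^{-1})\circ f_{d'}= f_d\circ(\Psi_d\circ\Psi_{d'}^{-1}),
\end{equation}
with both $\Phi_d\circ\Phi_{d'}^{-1}$ and $\Psi_d\circ\Psi_{d'}^{-1}$ quasiconformal homeomorphisms of $\c$. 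This is precisely the definition of quasiconformal equivalence recalled just before Theorem~\ref{thm3}. Since all $f_d$ lie in $\s$ by Theorem~\ref{thm1} and have pairwise different Hausdorff dimensions of the escaping set, the family $\{f_d:d\in[0,2)\}$ (or any uncountable subfamily of it) witnesses the statement.

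The main obstacle is the second step: one must verify that the quasiconformal gluings used in the proof of Theorem~\ref{thm1} really do assemble into globally defined quasiconformal homeomorphisms of $\c$ intertwining the different members of the family, and not merely into local conjugacies valid outside some compact set. This forces one to arrange the surgery so that the pieces agree, up to quasiconformal interpolation of uniformly bounded dilatation, across all values of $d$, and to keep track of base points so that the resulting $\Phi_d$, $\Psi_d$ extend across the poles. Once this bookkeeping is done, qc equivalence is built into the construction and no further dynamical argument is needed. Note also that, by Theorem~\ref{thm1}(3), the orders $d/(2-d)$ are pairwise distinct, so the $f_d$ are certainly not pairwise affinely equivalent; thus the uncountable family is genuinely nontrivial and parallels, in the meromorphic Speiser setting, the entire examples of \cite{epstein5,bishop6}.
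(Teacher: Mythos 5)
Your high-level strategy is correct and matches the paper's: take the family $(f_\rho)$ from Theorem~\ref{thm1} (parametrised by order $\rho$, giving escaping-set dimensions $2\rho/(1+\rho)$), show that all of them lie in a single quasiconformal equivalence class, and conclude by uncountability. However, the mechanism you propose for the equivalence step does not match the actual construction and leaves the crucial gap unfilled.

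The schematic factorisation $f_d=\Phi_d\circ F\circ\Psi_d^{-1}$ with a \emph{fixed} meromorphic model $F$ is not available. What the construction in Section~\ref{sec3.2} actually produces is $G_d=f_d\circ\phi_d$, where $G_d$ is a $d$-dependent quasi-meromorphic function built by gluing $\wp_1\circ h_1(z^{\rho/2}+c_1)$ to $\wp_2\circ h_2(z^{\rho/2}+c_2)$; both the power map $z^{\rho/2}$ and the lattices of the $\wp_i$ (chosen so that $\tan(\alpha/2)=\Im\tau/\re\tau$ with $\alpha=\rho\pi$) vary with $d$. There is no single $F$ on which a family of planar quasiconformal maps acts on both sides, so one cannot simply compose $\Phi_d\circ\Phi_{d'}^{-1}$ and $\Psi_d\circ\Psi_{d'}^{-1}$. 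You yourself flag exactly this as ``the main obstacle,'' but the paper resolves it in a different way: rather than trying to write down global quasiconformal intertwiners directly, it first proves \emph{topological} equivalence by matching up the combinatorics of the preimage graphs $\Lambda_i=G_i^{-1}(\Gamma_i)$ through a careful labelling of faces (proof of Theorem~\ref{equd}), and then upgrades topological to quasiconformal equivalence by invoking the result for finite-type maps in \cite[Proposition~2.3(d)]{epstein5}. This last step is the key lemma your proposal is missing; without it, the bookkeeping you describe does not close into a globally quasiconformal conjugacy of the whole plane. In short: right framing, wrong (and unjustified) decomposition, and the essential input---that topological equivalence implies quasiconformal equivalence in the Speiser class---is absent.
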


The proof of this result depends essentially on the non-invariance of orders under the above equivalence relation: Quasiconformally equivalent finite-order meromorphic Speiser functions may have different orders of growth. While  it was known already for meromorphic functions (see discussion in \cite{epstein5}), it is only shown quite recently that this also holds in the entire setting \cite{bishop6}.

\medskip
\noindent{\emph{Structure of the article}.} In Section \ref{pre} we give some preliminaries that will be used for our construction and also for the estimate of Hausdorff dimensions. Section \ref{CS} is devoted to the construction of desired Speiser meromorphic functions by using a quasiconformal surgery. In Section \ref{eses} we estimate the Hausdorff dimensions of escaping sets of the functions constructed in Section \ref{CS}, which will complete the proof of Theorem \ref{thm1} and also Theorem \ref{thm3}.

\begin{ack}
We would like to thank Walter Bergweiler for many useful comments, in particular for observing 
that the construction in Section \ref{sec3.3} could be substantially simplified by using the construction in Section \ref{sec3.2} and also for suggesting the example in Proposition \ref{full}. The second author acknowledges partial support from the China Postdoctoral Science Foundation (No.2019M651329). We also want to express our gratitude to the Centre for Mathematical Sciences at Lund University for providing a nice working environment. The authors are grateful to the referee for many helpful suggestions and comments.
\end{ack}

\section{Some Preliminaries}\label{pre}

Let $f:\c\to\hc$ be transcendental and meromorphic. A point $c$ is a critical point of $f$ if $f$ has vanishing spherical derivative at $c$. The image of a critical point is called a \emph{critical value}. We say that $a\in\hc$ is an \emph{asymptotic value} of $f$, if there exists a curve $\gamma: (0,\infty)\to\hc$ tending to $\infty$ and $f(\gamma(t))$ tends to $a$ as $t\to\infty$.  As mentioned in the introduction, the \emph{singular set} $\sing(f^{-1})$ of $f$ is the set of all critical and asymptotic values of $f$. Note that we are not excluding the possibility that $\infty$ might be a singular value. The dynamical behaviours of meromorphic functions are closely related to the dynamical behaviours of singular values and thus the singular set has been studied a lot from this respect. We refer to \cite{bergweiler1} for more details on this.

In this section we give some notations that will be used later and also present some preliminary results concerning Nevanlinna theory and quasiconformal mappings.

\smallskip
The upper and lower half planes are denoted by $\h^{+}$ and $\h^{-}$ respectively. We let $\mathbb{Z}$ and $\mathbb{N}$ denote the integers and natural numbers respectively. $\R$ denotes the real axis. $\lfloor x \rfloor$ will denote the integer part of a real number $x$. We will also use disks in terms of Euclidean and spherical metrics. More precisely, for $a\in\c,\,r>0$ and $A\subset \c$, we denote by $\overline{D}(a,r),\,D(a,r),\,\diam A,\,\area A$ the closed and open disk of radius $r$ centred at $a$, the diameter and the area of $A$ respectively. The unit disk is usually denoted by $\ud$. We let $D_{\chi}(a,r),\,\diam_{\chi}A,\,\area_{\chi} A$ be the spherical versions (with the spherical metric). The density of $A$ in $B$, for measurable sets $A$ and $B$, are defined by
$$\dens(A,B)=\frac{\area (A\cap B)}{\area B},\,~\,\dens_{\chi}(A,B)=\frac{\area_{\chi} (A\cap B)}{\area_{\chi} B}.$$

We first recall some notions and preliminary results from the Nevanlinna theory. For more details, we refer to \cite{goldbergmero, hayman1, nevanlinna6}.

Let $n(r, f, a)$ denote the number of $a$-points of $f$ in the closed disk $\overline{D}(0, r)$; that is, the number of solutions of $f(z)=a$ in $\overline{D}(0, r)$. In particular, if $a=\infty$, we will write $n(r,f, \infty)=n(r,f)$ for simplicity. The \emph{integrated counting function} and the \emph{proximity function} are defined respectively as
$$N(r,f)=\int_{0}^{r}\frac{n(t,f)-n(0,f)}{t}\,dt+n(0,f)\log r$$
and
$$m(r,f)=\frac{1}{2\pi}\int_{0}^{2\pi}\log^{+}\left|f\left(re^{i\theta}\right)\right|d\theta,$$
where $\log^{+}a=\max\{0, \log a\}$ for $a>0$. Then the \emph{Nevanlinna characteristic} is defined by
$$T(r,f)=m(r,f)+N(r,f).$$

The following theorem is well known in the value distribution theory of meromorphic functions and is often referred as \emph{the first fundamental theorem of Nevanlinna theory}. 
\begin{theorem}\label{fft}
Let $a\in\c$. Then
$$T(r,f)=T\left(r,\frac{1}{f-a}\right)+\mathcal{O}(1).$$
\end{theorem}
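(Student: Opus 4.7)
The plan is to derive the identity from Jensen's formula in two essentially independent steps: first establish $T(r,g)=T(r,1/g)+\mathcal{O}(1)$ for a general transcendental meromorphic $g$, and then absorb the shift $f\mapsto f-a$ into the error term by comparing $T(r,f)$ with $T(r,f-a)$.

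For the first step, I would apply Jensen's formula to $g=f-a$; assuming for the moment that $g(0)\neq 0,\infty$ (the general case being a standard minor modification where the first nonzero Laurent coefficient at $0$ replaces $g(0)$), this yields
$$\frac{1}{2\pi}\int_{0}^{2\pi}\log\left|g(re^{i\theta})\right|\,d\theta=\log|g(0)|+N(r,1/g)-N(r,g).$$
Using $\log x=\log^{+}x-\log^{+}(1/x)$ in the integrand decomposes the left-hand side as $m(r,g)-m(r,1/g)$, and rearranging gives
$$m(r,g)+N(r,g)=m(r,1/g)+N(r,1/g)+\log|g(0)|,$$
that is, $T(r,g)=T(r,1/g)+\log|g(0)|$.

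For the second step, I would compare $T(r,f-a)$ with $T(r,f)$. The counting contribution is immediate since $f$ and $f-a$ share the same poles with the same multiplicities, so $N(r,f-a)=N(r,f)$. For the proximity functions, the elementary inequalities $\log^{+}|f-a|\leq\log^{+}|f|+\log^{+}|a|+\log 2$ and $\log^{+}|f|\leq\log^{+}|f-a|+\log^{+}|a|+\log 2$ integrate to $|m(r,f-a)-m(r,f)|\leq\log^{+}|a|+\log 2$. Applying the first step to $g=f-a$ and combining with this comparison yields
$$T(r,f)=T(r,f-a)+\mathcal{O}(1)=T\left(r,\frac{1}{f-a}\right)+\mathcal{O}(1),$$
which is the claim. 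There is no real obstacle here; the only care needed is that the $\mathcal{O}(1)$ term absorbs both the Jensen constant $\log|f(0)-a|$ (suitably modified if $f(0)=a$ or $\infty$) and the $\log^{+}|a|+\log 2$ coming from the triangle inequality for $\log^{+}$.
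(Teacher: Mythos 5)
The paper does not prove this statement: it is cited as the First Fundamental Theorem of Nevanlinna theory, a standard result in value distribution theory, with references to the literature (e.g.\ Goldberg--Ostrovskii, Hayman, Nevanlinna). Your derivation via Jensen's formula, decomposing $\log|g|=\log^{+}|g|-\log^{+}|1/g|$ to get $T(r,g)=T(r,1/g)+\log|g(0)|$ and then absorbing the shift by $a$ with the elementary $\log^{+}$ triangle inequality, is the standard textbook proof and is correct.
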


With the aid of the Nevanlinna characteristic function, the \emph{order} and \emph{lower order} of growth of $f$ are defined respectively as
$$\rho(f)=\limsup_{r\to\infty}\frac{\log T(r,f)}{\log r}$$
and
$$\mu(f)=\liminf_{r\to\infty}\frac{\log T(r,f)}{\log r}.$$

In case that $f$ is entire, the term $T(r,f)$ may be replaced by $\log M(r,f)$, where $M(r,f)$ is the maximum modulus, i.e 
$M(r,f) = \max\limits_{|z| = r}|f(z)|$. 

\medskip
We will also need the following result by Teichm\"uller \cite{teichmuller2}, which reduces the calculation of the order to estimating the counting function for poles for certain meromorphic functions. The following version can be found in \cite[Proposition 7.1]{bergweiler3}.

\begin{theorem}[Teichm\"uller]\label{tec}
Let $f\in\b$ be transcendental and meromorphic. Suppose that there exists an $N\in\mathbb{N}$ such that the poles of $f$ have multiplicity at most $N$. If $\liminf_{r\to\infty}m(r,f)/T(r,f)>0$ or, more generally, if $m(r,f)$ is unbounded, then $f$ has a logarithmic singularity over infinity.
\end{theorem}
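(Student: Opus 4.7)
The plan is to argue the contrapositive: assume that $\infty$ is not the projection of any logarithmic singularity of $f^{-1}$, and deduce that $m(r,f)$ must remain bounded as $r \to \infty$. Since $f \in \b$, choose $R$ larger than $\sup\{|v| : v \in \sing(f^{-1}) \cap \c\}$ and consider the components of $\Omega := f^{-1}(\{w : |w| > R\})$. On each component $V$, the restriction $f|_V$ maps holomorphically into $\{|w|>R\}$, unramified except at the poles of $f$ in $V$. An unbounded such $V$ would produce, via Iversen's theorem applied in the puncture-free annulus $\{R < |w| < \infty\}$, a logarithmic singularity over $\infty$, which is excluded by assumption; hence every component of $\Omega$ is bounded. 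The bounded-multiplicity hypothesis $\le N$ then guarantees each contains only finitely many poles, so $f|_{V_j}$ is a proper branched cover of finite degree.

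On each such $V_j$, the function $\log|f| - \log R$ is non-negative, vanishes on $\partial V_j$, is harmonic away from the poles $p_1,\dots,p_{k_j} \in V_j$, and has singularity $-m_i \log|z - p_i| + O(1)$ at each $p_i$ with multiplicity $m_i \le N$. By uniqueness of the Green's function,
\[
\log|f(z)| - \log R \;=\; \sum_{i=1}^{k_j} m_i \, g_{V_j}(z, p_i), \qquad z \in V_j,
\]
where $g_{V_j}(\cdot,p_i)$ denotes the Green's function of $V_j$ with pole at $p_i$. Since $\log^+|f| \le \log R$ outside $\Omega$, integrating over the circle $|z|=r$ yields
\[
m(r,f) \;\le\; \log R \;+\; \frac{N}{2\pi} \sum_j \sum_{i=1}^{k_j} \int_{\{\theta \,:\, re^{i\theta} \in V_j\}} g_{V_j}(re^{i\theta}, p_i)\, d\theta.
\]
The remaining task is to show that this double sum stays uniformly bounded in $r$.

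This final uniform bound is the hard step. The tract classification in the first paragraph is a formal consequence of the $\b$ hypothesis together with Iversen's theorem, and the representation via Green's functions is straightforward. What is delicate is controlling how the bounded tracts $V_j$ can accumulate near circles of large radius: for each $j$ and most angles $\theta$ with $re^{i\theta} \in V_j$, the point $re^{i\theta}$ is close to $\partial V_j$, so $g_{V_j}(re^{i\theta}, p_i)$ is small; quantifying this uniformly over $j$ and $r$ is a length--area or harmonic-measure argument of Ahlfors type, and this is precisely where the bounded-multiplicity hypothesis on poles enters in full strength, preventing any single tract from contributing a non-negligible amount to $m(r,f)$. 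Granted such an Ahlfors-style estimate, one concludes $m(r,f) = O(1)$, which contradicts the standing hypothesis on $m(r,f)$ and completes the proof.
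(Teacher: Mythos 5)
The paper does not prove this theorem; it is quoted from Teichm\"uller (1937) via Proposition~7.1 of Bergweiler--Rippon--Stallard, so there is no in-paper argument to compare against. Judged on its own terms, your proposal has two genuine gaps, one of which is decisive.

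First, the tract classification is not justified. You claim that an unbounded component $V$ of $\Omega=f^{-1}(\{|w|>R\})$ "would produce, via Iversen's theorem applied in the puncture-free annulus $\{R<|w|<\infty\}$, a logarithmic singularity over $\infty$." But as soon as $V$ contains a pole, $f|_V$ maps into $\{|w|>R\}\cup\{\infty\}$ and \emph{not} into the punctured annulus, so the covering-map argument that actually yields a logarithmic singularity (unramified cover of a punctured disk by a simply connected domain) does not apply, and Iversen's theorem by itself gives nothing of the kind. You have not excluded unbounded components containing poles, possibly infinitely many, and this is where the work is. Separately, "the bounded-multiplicity hypothesis $\le N$ then guarantees each contains only finitely many poles" is a non-sequitur: a bounded $V_j$ has finitely many poles simply because poles are isolated, with no role for $N$.

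Second, and more seriously, the Green's function step is not a reduction at all. Set
\[
B(r)=\sum_j\sum_{i=1}^{k_j}\int_{\{\theta:\,re^{i\theta}\in V_j\}} g_{V_j}(re^{i\theta},p_i)\,d\theta .
\]
Since $\log|f(z)|-\log R=\sum_i m_i\,g_{V_j}(z,p_i)$ on $V_j$ exactly, and $1\le m_i\le N$, one gets immediately
\[
\frac{1}{N}\bigl(2\pi\,m(r,f)-4\pi\log R\bigr)\ \le\ B(r)\ \le\ 2\pi\, m(r,f).
\]
Thus $B(r)$ is bounded in $r$ \emph{if and only if} $m(r,f)$ is bounded: "showing the double sum stays uniformly bounded" is the theorem, not a step towards it. Your use of the multiplicity hypothesis (replacing $m_i$ by $N$) merely loosens an identity and contributes nothing, and plugging in the trivial bound $B(r)\le 2\pi m(r,f)$ reproduces a tautology. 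The real content of the theorem --- the "Ahlfors-style" estimate you invoke but do not state, in which the bounded degree of $f$ on each bounded tract must enter through a genuinely geometric (module/length--area) argument to show those tracts cannot carry a non-negligible share of $\log^+|f|$ on large circles --- is exactly what is missing. As written, the argument is circular and does not establish the result.
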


One consequence of this result is the following lemma.

\begin{lemma}\label{tlemmma}
Let $f\in\b$ be transcendental and meromorphic for which $\infty$ is not an asymptotic value. Then
\begin{equation}\label{tlem}
\rho(f)=\limsup_{n\to\infty}\frac{\log N(r,f)}{\log r}=\limsup_{n\to\infty}\frac{\log n(r,f)}{\log r}.
\end{equation}
\end{lemma}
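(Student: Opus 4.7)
The plan is to derive the two equalities in turn. The first equality, $\rho(f) = \limsup_{r\to\infty} \log N(r,f)/\log r$, follows as soon as $m(r,f)$ is bounded, so I would invoke the contrapositive of Teichm\"uller's theorem (Theorem \ref{tec}). A logarithmic singularity of $f$ over $\infty$ would supply an asymptotic value equal to $\infty$; since the hypothesis rules this out (and since in every application of this lemma in the paper the poles of $f$ have uniformly bounded multiplicity, e.g.\ multiplicity $2$ in Theorem \ref{thm1}), Theorem \ref{tec} yields $m(r,f) = O(1)$. Then $T(r,f) = m(r,f) + N(r,f) = N(r,f) + O(1)$, and since $f$ is transcendental we have $T(r,f)\to\infty$, hence $N(r,f)\to\infty$ and $\log T(r,f) - \log N(r,f) \to 0$. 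Dividing by $\log r$ and passing to $\limsup$ gives the first equality.

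For the second equality I would use the standard two-sided comparison between $N(r,f)$ and $n(r,f)$. The upper bound comes from monotonicity of $n$:
\[
N(r,f) = \int_0^r \frac{n(t,f) - n(0,f)}{t}\,dt + n(0,f)\log r \leq n(r,f)\log r + O(1),
\]
so $\log N(r,f) \leq \log n(r,f) + \log\log r + O(1)$. The lower bound comes from restricting the integral to $[r,2r]$:
\[
N(2r,f) \geq \int_r^{2r} \frac{n(t,f) - n(0,f)}{t}\,dt \geq (\log 2)\bigl(n(r,f) - n(0,f)\bigr),
\]
so $\log n(r,f) \leq \log N(2r,f) + O(1)$. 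Dividing by $\log r$ and taking $\limsup$ as $r\to\infty$, the extra $\log\log r$ and $O(1)$ terms are absorbed, yielding $\limsup \log n(r,f)/\log r = \limsup \log N(r,f)/\log r$.

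The main obstacle is the appeal to Theorem \ref{tec}: its stated form requires bounded pole multiplicity, which is not explicitly assumed in Lemma \ref{tlemmma} but is present in every use-case in this paper, so this should not affect the arguments to follow. Once $m(r,f) = O(1)$ has been secured, the remaining Nevanlinna-theoretic comparisons between $T$, $N$ and $n$ are routine.
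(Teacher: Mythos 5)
Your proposal follows essentially the same route as the paper's proof: invoke Theorem \ref{tec} in contrapositive form to conclude $m(r,f)=O(1)$, hence $T(r,f)=N(r,f)+O(1)$ gives the first equality, and then appeal to the classical comparison between $N(r,f)$ and $n(r,f)$ for the second (which you spell out, whereas the paper simply cites Goldberg--Ostrovskii). Your observation that Theorem \ref{tec} formally requires bounded pole multiplicity, which the lemma does not explicitly state, is a fair point --- the paper's own proof has the same implicit assumption --- but as you note it holds in every application made of the lemma here.
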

\begin{proof}
By Theorem \ref{tec} one has that $m(r,f)$ is bounded. So we have
$$T(r,f)=N(r,f)+\mathcal{O}(1),$$
which, by the definition of the order, leads to the first equality of \eqref{tlem}. That the second equality holds is a classical result; see \cite[Chapter 2]{goldbergmero} for more details.
\end{proof}

\medskip
\noindent{\emph{Asymptotic conformality.}} A homeomorphism $\varphi: \c\to\c$ is \emph{quasiconformal} if $\varphi$ is absolutely continuous on almost all horizontal and vertical lines, and moreover, the partial derivatives satisfy
$$|\varphi_{\bar{z}}|\leq k |\varphi_z|$$
almost everywhere for $0<k\leq 1$. Here $\varphi_{\bar{z}}=\varphi_{x}+i\varphi_y$ and $\varphi_{z}=\varphi_x-i\varphi_y$. The \emph{dilatation} of $\varphi$ at a point $z$ is
$$K_{\varphi}(z)=\frac{|\varphi_z|+|\varphi_{\bar{z}}|}{|\varphi_z|-|\varphi_{\bar{z}}|}=\frac{1+|\mu_{\varphi}|}{1-|\mu_{\varphi}|},$$
where $\mu_{\varphi}=\varphi_{\bar{z}}/\varphi_{z}$ is the complex dilatation of $\varphi$. The constant $K=(1+k)/(1-k)$ is called the quasiconformal constant of $\varphi$. The well known measurable Riemann mapping theorem (see \cite{ahlfors8, lehto1}) says that for any given measurable $|\mu|\leq k<1$, there exists a quasiconformal homeomorphism $\varphi:\c\to\c$ such that $\mu_{\varphi}=\mu$ almost everywhere. We refer to \cite{ahlfors8}, \cite{lehto1} for a detailed account of quasiconformal mappings.

A map $g: \c\to\hc$ is \emph{quasi-meromorphic} if it can be written as $g=f\circ\varphi$, where $f:\c\to\hc$ is meromorphic and $\varphi:\c\to\c$ is quasiconformal. Our construction will first give such a quasi-meromorphic function $g$ in the plane and thus a meromorphic function $f$. To obtain desired properties of $f$, we need to control asymptotic behaviours of $\varphi$ near $\infty$ in the relation $g=f\circ\varphi$. This is ensured by the following theorem, which gives a condition for the conformality of a quasiconformal mapping at a point; see \cite{lehto1}.

\begin{theorem}[Teichm\"uller-Wittich-Belinskii's theorem]\label{twb}
Let $\varphi:\c\to\c$ be a quasiconformal mapping and let $K_{\varphi}$ be its dilatation. Suppose that
$$\iint_{|z|>1}\frac{K_{\varphi}(z)-1}{x^2+y^2}\,dxdy<\infty.$$
Then
$$\varphi(z)\sim z~\,\,\text{as}\,\,~~z\to\infty.$$
\end{theorem}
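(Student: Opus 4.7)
My plan is to pass to logarithmic coordinates and then invoke the stability of solutions to the Beltrami equation. First I would set $z = e^w$ and $\Phi(w) = \log \varphi(e^w)$; since $\varphi(z) \to \infty$, a single-valued branch of $\Phi$ exists on some right half-plane $H_R = \{\re w > R\}$, and there $\Phi$ is quasiconformal with $K_\Phi(w) = K_\varphi(e^w)$ and satisfies the $2\pi i$-equivariance $\Phi(w + 2\pi i) = \Phi(w) + 2\pi i$. The change of variables $dxdy/|z|^2 = du\,dv$ (writing $w = u+iv$) turns the hypothesis into the cleaner statement
$$\iint_{H_R}\bigl(K_\Phi(w) - 1\bigr) du\,dv < \infty,$$
so the excess dilatation of $\Phi$ is in $L^1$ of the half-plane against Lebesgue measure.

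Next I would run a translation-compactness argument. For any sequence $w_n \in H_R$ with $\re w_n \to \infty$, consider the shifts $\Phi_n(\zeta) = \Phi(\zeta + w_n) - \Phi(w_n)$. These are quasiconformal on ever larger disks about $0$ with uniformly bounded quasiconformal constant, hence form a normal family. Their Beltrami coefficients $\mu_n(\zeta) = \mu_\Phi(\zeta + w_n)$ satisfy $\iint_{|\zeta|<\rho}(K_{\Phi_n}(\zeta) - 1)\,d\xi\,d\eta \to 0$ for every fixed $\rho > 0$, by absolute continuity of the integral applied to the global $L^1$ bound just obtained. By the continuous dependence of normalized solutions to the measurable Riemann mapping theorem on the Beltrami coefficient, every subsequential limit of $\Phi_n$ is conformal on each compact set, hence affine. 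The normalization $\Phi_n(0) = 0$ and the $2\pi i$-equivariance $\Phi_n(2\pi i) = 2\pi i$ then pin the limit down to the identity, so $\Phi_n(\zeta) \to \zeta$ locally uniformly.

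Translated back to $z$-coordinates, this says $\varphi(e^{w_n} e^{\zeta})/\varphi(e^{w_n}) \to e^{\zeta}$ locally uniformly in $\zeta$, i.e.\ $\varphi$ is asymptotically conformal at $\infty$. To upgrade this to $\varphi(z) \sim z$ I would additionally compare moduli of round annuli with their images: the modulus of $\varphi(\{r<|z|<2r\})$ differs from $(\log 2)/(2\pi)$ by a quantity controlled by $\iint_{r<|z|<2r}(K_\varphi-1)/|z|^2\,dxdy$, which vanishes as $r \to \infty$ by the hypothesis. Combined with the local asymptotic conformality, this forces $|\varphi(z)|/|z|\to 1$, while an analogous estimate on moduli of annular sectors forces $\arg \varphi(z) - \arg z \to 0$, together yielding $\varphi(z)/z \to 1$.

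The main obstacle is pinning the limit in the compactness step down to the identity rather than to some other affine map $\zeta \mapsto a\zeta + b$. A uniform bound on the quasiconformal constant alone yields only a normal family with possible affine limits; it is the genuine $L^1$-integrability of $K_\varphi - 1$ in the weighted form given that forces the Beltrami coefficients of the shifts to vanish in the limit, while the $2\pi i$-periodicity together with the normalization are what fix the affine parameters to $a = 1$ and $b = 0$. Losing any one of these three ingredients breaks the conclusion, which is why the sharp integrability condition is doing real work rather than merely providing boundedness.
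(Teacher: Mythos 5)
The paper does not prove this theorem: it is cited directly from Lehto--Virtanen \cite{lehto1} (see the sentence introducing the statement), so your attempt must stand on its own. The first two blocks of your argument are sound. Passing to logarithmic coordinates correctly trades the weighted integral over $\{|z|>1\}$ for an unweighted $L^1$ bound on $K_\Phi-1$ over a half-plane; the translation-compactness argument, with the $L^1$-smallness of the Beltrami coefficients of the shifts and the $2\pi i$-equivariance to pin the affine limit, does establish that $\Phi(\zeta+w_n)-\Phi(w_n)\to\zeta$ locally uniformly whenever $\operatorname{Re}w_n\to\infty$.

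The gap is in the final upgrade. Local asymptotic conformality together with the per-annulus modulus estimate is strictly weaker than $\varphi(z)/z\to 1$, and the combination you propose does not close this gap. Here is a concrete obstruction. Take $\Phi(w)=w+\psi(u)$ with $w=u+iv$, where $\psi$ is smooth, increasing, $\psi'(u)\to 0$, but $\psi(u)\to\infty$ (e.g.\ $\psi(u)=\log u$). Then $\varphi(z)=z\,e^{\psi(\log|z|)}$ is a quasiconformal radial stretching, $2\pi i$-equivariance holds, the shifts $\Phi(\zeta+w_n)-\Phi(w_n)=\zeta+\psi(\operatorname{Re}w_n+\operatorname{Re}\zeta)-\psi(\operatorname{Re}w_n)$ converge to the identity, and the modulus of each image annulus $\varphi(\{r<|z|<2r\})$ is $\tfrac{1}{2\pi}\bigl(\log 2+\psi(\log 2r)-\psi(\log r)\bigr)\to\tfrac{\log 2}{2\pi}$. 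Yet $|\varphi(z)|/|z|=e^{\psi(\log|z|)}\to\infty$. This $\varphi$ violates the hypothesis of the theorem, because $K_\varphi-1\asymp\psi'(\log|z|)$ makes the weighted integral comparable to $\int\psi'\,du=\psi(\infty)-\psi(R)=\infty$; but it satisfies precisely the two intermediate conclusions you derive, which proves that those conclusions cannot imply the theorem. What is missing is a genuinely aggregated estimate: you must compare the modulus of $\varphi(\{1<|z|<r\})$ with $(\log r)/(2\pi)$ using the \emph{cumulative} integral $\iint_{1<|z|<r}(K_\varphi-1)/|z|^2$, show the discrepancy is Cauchy as $r\to\infty$ (this is exactly where summability, as opposed to the mere vanishing of tails over dyadic annuli, does the work), and combine this with a rounding estimate forcing $\varphi(\{|z|=r\})$ to lie asymptotically in a thin round annulus. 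That telescoping argument is the heart of the classical proof in \cite{lehto1}, and it is not subsumed by the per-annulus decay you use.
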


The \emph{logarithmic area} of a set $A\subset \mathbb{R}^2$ is defined as
$$\logarea(A)=\iint_{A}\frac{dxdy}{x^2+y^2}.$$
The following result follows from the above theorem.
\begin{lemma}\label{logareaes}
Let $\varphi:\c\to\c$ be a quasiconformal mapping. Let $A$ be the supporting set of $\varphi$; i.e., the set of points for which $\varphi$ is not conformal. If $\logarea(A\setminus\ud)<\infty$. Then
$$\varphi(z)\sim z~\,\,\text{as}\,\,~~z\to\infty.$$
Upon normalisation, one may assume that $\varphi(z)=z+o(z)$ for large $z$.
\end{lemma}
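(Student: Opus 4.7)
The plan is to deduce Lemma \ref{logareaes} directly from Theorem \ref{twb}, of which it is essentially a restatement with the integral hypothesis replaced by a geometric hypothesis on the supporting set. The crucial observation is that the integrand $(K_\varphi(z)-1)/(x^2+y^2)$ appearing in Teichm\"uller-Wittich-Belinskii is supported on $A$ up to a null set: at any point $z\notin A$ the map $\varphi$ is conformal, so $\mu_\varphi(z)=0$ and hence $K_\varphi(z)=1$, making the numerator vanish. After restricting to $|z|>1$, the integrand is therefore supported on $A\setminus\ud$ (up to a set of measure zero).

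The second ingredient is the global boundedness of the dilatation. Since $\varphi$ is a quasiconformal homeomorphism of $\c$, by definition there is a finite constant $K\geq 1$ with $K_\varphi(z)\leq K$ almost everywhere. Combining these two facts yields
\begin{equation}
\iint_{|z|>1}\frac{K_{\varphi}(z)-1}{x^2+y^2}\,dxdy \;\leq\; (K-1)\iint_{A\setminus\ud}\frac{dxdy}{x^2+y^2} \;=\; (K-1)\logarea(A\setminus\ud),
\end{equation}
which is finite by hypothesis. Thus the hypothesis of Theorem \ref{twb} is satisfied, and that theorem delivers $\varphi(z)\sim z$ as $z\to\infty$.

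For the second, normalised form of the conclusion, I would note that the complex dilatation $\mu_\varphi$, and therefore $K_\varphi$ and the supporting set $A$, are unaffected by post-composition with an affine map $w\mapsto aw+b$. Hence the integral hypothesis above is invariant under such a modification, and Theorem \ref{twb} still applies to the modified map. Choosing $a$ and $b$ suitably one can arrange the nonzero finite limit $\lim_{z\to\infty}\varphi(z)/z$ to equal $1$, which is precisely the statement $\varphi(z)=z+o(z)$ for large $z$. No substantial obstacle is anticipated; the entire argument is a one-line estimate used to feed Theorem \ref{twb}, together with a bookkeeping remark about affine normalisation.
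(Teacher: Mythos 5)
Your proof is correct and follows essentially the same route the paper takes: observe that the Teichm\"uller--Wittich--Belinskii integrand is supported (up to a null set) on $A$ since $K_\varphi\equiv 1$ off $A$, bound $K_\varphi-1$ by a finite constant coming from quasiconformality, and thereby dominate the TWB integral by a multiple of $\logarea(A\setminus\ud)$. Your write-up is in fact cleaner than the paper's: the paper's one-line computation contains a slip (it writes $K_\varphi-1=2|\mu_\varphi|/(1+|\mu_\varphi|)\leq 1$, whereas the correct identity is $K_\varphi-1=2|\mu_\varphi|/(1-|\mu_\varphi|)$, which is bounded by $K-1$ rather than by $1$), and it does not spell out that the integrand vanishes off the support, which is the step you correctly make explicit. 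Your remark on affine normalisation, which the paper omits, is also correct, since post-composition by a conformal affine map does not change $\mu_\varphi$ or $A$.
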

\begin{proof}
Since $|\mu_{\varphi}|\leq 1$, one has $K_{\varphi}-1=2|\mu_{\varphi}|/(1+|\mu_{\varphi}|)\leq 1$. Now the conclusion follows from the above Theorem \ref{twb}.
\end{proof}

Therefore, in order to obtain the conclusion of the above theorem, it suffices in this paper to check that the supporting set of $\varphi$ outside of the unit disk has finite logarithmic area.
\begin{remark}
The above lemma is sufficient for our later use. We note that many results are focused on the estimate of the error term of the map $\varphi$ near $\infty$.
\end{remark}

\medskip

\noindent{\emph{McMullen's result.}} The estimate of the lower bound of the escaping set will use a result of McMullen \cite{mcmullen11}, which is usually stated using the standard Euclidean metric. The following version, using spherical metric, can be found in \cite{bergweiler2}. To state this result, assume that $E_{\ell}$ is a collection of disjoint compact subsets of $\hc$ for each $\ell\in\mathbb{N}$ satisfying
\begin{itemize}
\item[(i)] each element of $E_{\ell+1}$ is contained in a unique element of $E_{\ell}$;
\item[(ii)] each element of $E_{\ell}$ contains at leat one element of $E_{\ell+1}$.
\end{itemize}
Suppose that $\overline{E}_{\ell}$ is the union of all elements of $E_{\ell}$. Let $E=\bigcap \overline{E}_{\ell}$. Suppose that for $V\in E_{\ell}$,
$$\dens_{\chi}(\overline{E}_{\ell+1}, V)\geq \Delta_{\ell}$$
and
$$\diam_{\chi} V \leq d_\ell$$
for two sequences of positive real numbers $(\Delta_{\ell})$ and $(d_{\ell})$. Then we have the following estimate.
\begin{theorem}\label{mcmlower}
Let $E_{\ell},\,E,\,\Delta_{\ell}$ and $d_{\ell}$ be as above. Then
$$\dim E \geq 2-\limsup_{\ell\to\infty}\frac{\sum_{j=1}^{\ell+1}|\log\Delta_j|}{|\log d_{\ell}|}.$$
\end{theorem}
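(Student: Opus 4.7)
The natural approach is the mass distribution principle (Frostman): if I can build a Borel probability measure $\mu$ supported on $E$ satisfying $\mu(D_\chi(x,r))\le C r^{s}$ for every spherical disk of sufficiently small radius $r$, then $\dim E\ge s$. The task is therefore to produce such a $\mu$ and to read off the largest admissible $s$ from the hypotheses on $\Delta_\ell$ and $d_\ell$.

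I would construct $\mu$ inductively by distributing mass according to spherical area within each nested piece. Start with $\mu$ the normalised spherical area measure on $\overline E_0$. Once $\mu$ is defined on $V\in E_\ell$, assign mass to each child $V'\in E_{\ell+1}$ with $V'\subset V$ via
\begin{equation}
\mu|_{V'}=\mu(V)\cdot\frac{\area_\chi(\,\cdot\,\cap V')}{\area_\chi(\overline E_{\ell+1}\cap V)}.
\end{equation}
The density hypothesis $\area_\chi(\overline E_{\ell+1}\cap V)\ge\Delta_\ell\area_\chi(V)$ yields the recursion
\begin{equation}
\frac{\mu(V')}{\area_\chi(V')}\le\frac{1}{\Delta_\ell}\cdot\frac{\mu(V)}{\area_\chi(V)},
\end{equation}
so telescoping from level $0$ gives, for every $V\in E_\ell$,
\begin{equation}
\mu(V)\;\le\;\frac{\area_\chi(V)}{\area_\chi(\overline E_0)\,\prod_{j=0}^{\ell-1}\Delta_j}.
\end{equation}

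To estimate $\mu(D_\chi(x,r))$, I would choose the level $\ell$ with $d_{\ell+1}\le r$ and $\ell$ as large as possible with this property, so that $|\log r|\ge|\log d_\ell|$ up to an $O(1)$ error. Any $V'\in E_{\ell+1}$ meeting $D_\chi(x,r)$ has $\diam_\chi V'\le d_{\ell+1}\le r$, hence $V'\subset D_\chi(x,2r)$; since the elements of $E_{\ell+1}$ are pairwise disjoint, their total spherical area inside $D_\chi(x,2r)$ is at most $\area_\chi(D_\chi(x,2r))\lesssim r^2$. Combining with the bound above,
\begin{equation}
\mu(D_\chi(x,r))\;\le\;\sum_{V'\cap D_\chi(x,r)\neq\emptyset}\mu(V')\;\lesssim\;\frac{r^{2}}{\prod_{j=0}^{\ell}\Delta_j}.
\end{equation}
Taking logarithms and shifting indices so that the product runs from $1$ to $\ell+1$ (as in the statement), this gives $\mu(D_\chi(x,r))\le C r^{s_\ell}$ with
\begin{equation}
s_\ell\;=\;2-\frac{\sum_{j=1}^{\ell+1}|\log\Delta_j|}{|\log d_\ell|}+o(1).
\end{equation}
The mass distribution principle then forces $\dim E\ge\liminf_{\ell\to\infty} s_\ell$, which is exactly the asserted lower bound. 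The only genuine care required is the bookkeeping between the two indexings — $\Delta$ records the child-in-parent density at a given level while $d$ records the parent diameter — and checking that the spherical area comparison $\area_\chi(D_\chi(x,2r))\lesssim r^2$ is uniform across $\hc$; I do not expect a substantive obstacle, since the density and diameter hypotheses are tailored precisely to this argument.
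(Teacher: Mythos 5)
The paper does not supply its own proof of this theorem; it is quoted verbatim (in the spherical formulation) from Bergweiler--Kotus, which in turn follows McMullen's original argument. Your proposal reconstructs exactly that argument --- the inductively defined measure whose density grows by a factor $1/\Delta_\ell$ at each level, the disjointness-plus-area bound $\area_\chi(D_\chi(x,2r))\lesssim r^2$, and the mass distribution principle --- and it is correct. The only point worth stating more carefully is the last step: since the exponent $s_\ell$ depends on $r$ through the chosen generation $\ell(r)$, one should fix $s<2-\limsup_\ell\sum_{j}|\log\Delta_j|/|\log d_\ell|$, observe that for all sufficiently small $r$ one has $\mu(D_\chi(x,r))\le Cr^{s}$, and then let $s$ increase to the limsup; your ``$\dim E\ge\liminf s_\ell$'' summary compresses this but is easily made rigorous, and the $\pm1$ shift in the summation index is immaterial to the $\limsup$.
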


Finally, we will need the following variation of the well known \emph{Koebe one-quarter theorem and the Koebe distortion theorems}; see \cite{pommerenke1}. 
\begin{theorem}[Koebe's theorem]\label{koebe}
Let $f$ be a univalent function in $D(z_0,r)$ and let $0<\lambda<1$. Then
\begin{equation}\label{estimate of value}
\dfrac{\lambda}{\left(1+\lambda \right)^2}\left|f'(z_0)\right|\leq \left| \dfrac{f(z)-f(z_0)}{z-z_0}\right| \leq \dfrac{\lambda}{\left(1-\lambda \right)^2}\left|f'(z_0)\right|
\end{equation}
and
\begin{equation}\label{estimate of derivative}
\dfrac{1-\lambda}{\left(1+\lambda \right)^3}\left|f'(z_0)\right|\leq \left| f'(z)\right| \leq \dfrac{1+\lambda}{\left(1-\lambda \right)^3}\left|f'(z_0)\right|
\end{equation}
for $|z-z_0|\leq \lambda r$. Moreover,
\begin{equation}\label{one-quarter theorem}
f\left(D(z_0,r)\right)\supset D\left( f(z_0), \frac{1}{4}|f'(z_0)|r  \right).
\end{equation}
\end{theorem}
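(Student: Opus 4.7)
The plan is to reduce the statement to the classical Koebe theorems on the unit disk by a standard affine normalisation, after which all three inequalities fall out by substitution. Since the statement concerns an arbitrary disk $D(z_0,r)$ and a univalent $f$ with $f'(z_0)\neq 0$ (automatic since $f$ is univalent), the natural move is to conjugate the picture so that both the domain disk and the value at the centre become canonical.

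First, I would introduce the normalised function
\[
g(w)=\frac{f(z_0+rw)-f(z_0)}{r\,f'(z_0)},\qquad w\in\ud .
\]
A direct check shows that $g:\ud\to\c$ is holomorphic and univalent (composition of $f$ with an affine isomorphism, then with another affine map on the target), and moreover $g(0)=0$, $g'(0)=1$. Hence the classical Koebe one-quarter theorem and the classical Koebe distortion theorems on $\ud$ (as stated e.g.\ in \cite{pommerenke1}) apply directly to $g$: for every $w\in\ud$ with $|w|\le\lambda<1$,
\[
\frac{|w|}{(1+|w|)^2}\le |g(w)|\le \frac{|w|}{(1-|w|)^2},\qquad
\frac{1-|w|}{(1+|w|)^3}\le |g'(w)|\le \frac{1+|w|}{(1-|w|)^3},
\]
and $g(\ud)\supset D(0,1/4)$.

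Next, I would transport these inequalities back to $f$ via the change of variables $z=z_0+rw$, so that $|z-z_0|\le \lambda r$ corresponds to $|w|\le\lambda$. From the definition of $g$ one has $g(w)=(f(z)-f(z_0))/(r\,f'(z_0))$ and $g'(w)=f'(z)/f'(z_0)$, so dividing the chained inequalities above by $|z-z_0|/r=|w|$ in the first pair, and rearranging in the second pair, yields exactly \eqref{estimate of value} and \eqref{estimate of derivative}. Here I would replace the factor $|w|/(1\pm|w|)^2$ by the monotone bound $\lambda/(1\pm\lambda)^2$, using that $t\mapsto t/(1-t)^2$ is increasing and $t\mapsto t/(1+t)^2$ is increasing on $[0,1)$; similarly for the derivative estimates. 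For the final inclusion, applying the classical one-quarter statement to $g$ and multiplying by $r\,f'(z_0)$ and translating by $f(z_0)$ gives $f(D(z_0,r))\supset D(f(z_0),\tfrac14|f'(z_0)|r)$, which is \eqref{one-quarter theorem}.

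There is really no genuine obstacle in the argument: the whole content is the classical Koebe theorems on $\ud$, and the statement as written is simply their affinely invariant formulation. The only points that warrant a little care are the bookkeeping in the change of variable (making sure the factor $|z-z_0|$ on the left-hand side of \eqref{estimate of value} arises correctly from $|w|$), and the monotonicity observation that lets us replace $|w|$ by the uniform parameter $\lambda$ on the right-hand sides. Both are routine, so the proof amounts to a one-line reduction followed by substitution.
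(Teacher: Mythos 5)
The paper does not actually prove this theorem; it simply records it as a standard consequence of the classical Koebe growth and distortion theorems and cites \cite{pommerenke1}. So there is no ``paper proof'' to compare against; your reduction to the unit disk via the normalised map $g(w)=(f(z_0+rw)-f(z_0))/(r f'(z_0))$ is exactly the standard affine-invariance argument one would use.

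That said, two points deserve care, and the second one exposes a genuine gap. First, the paper's estimate \eqref{estimate of value} as printed appears to carry spurious factors of $\lambda$: dividing the classical growth inequality $\tfrac{|w|}{(1+|w|)^2}\le|g(w)|\le\tfrac{|w|}{(1-|w|)^2}$ by $|w|$ (your change of variables gives $|g(w)|/|w| = |f(z)-f(z_0)|/(|z-z_0|\,|f'(z_0)|)$) produces the bounds $\tfrac{1}{(1+\lambda)^2}|f'(z_0)|\le\bigl|\tfrac{f(z)-f(z_0)}{z-z_0}\bigr|\le\tfrac{1}{(1-\lambda)^2}|f'(z_0)|$, with no $\lambda$ in either numerator. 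Indeed the printed upper bound $\tfrac{\lambda}{(1-\lambda)^2}|f'(z_0)|$ cannot be right: letting $z\to z_0$ forces the middle term to $|f'(z_0)|$, which exceeds $\tfrac{\lambda}{(1-\lambda)^2}|f'(z_0)|$ for small $\lambda$; and for the Koebe function with $\lambda=1/2$ one even has $|g(w)|/|w|=4$ at $w=1/2$, exceeding the printed bound $\tfrac{\lambda}{(1-\lambda)^2}=2$. Second, your proof glosses over this. You first say that dividing by $|w|$ ``yields exactly \eqref{estimate of value}'' --- it does not, for the reason above --- and then say you would ``replace the factor $|w|/(1\pm|w|)^2$ by $\lambda/(1\pm\lambda)^2$'' using monotonicity. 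These two moves are incompatible (once you have divided by $|w|$ that factor is gone), and the monotonicity step is the wrong direction for the lower bound: since $t\mapsto t/(1+t)^2$ is increasing, $|w|/(1+|w|)^2\le\lambda/(1+\lambda)^2$ for $|w|\le\lambda$, which is an \emph{upper} bound on the left-hand side of the growth inequality, not the lower bound you need. The clean derivation is: divide by $|w|$ first, then use that $t\mapsto 1/(1-t)^2$ is increasing and $t\mapsto 1/(1+t)^2$ is decreasing to replace $|w|$ by $\lambda$, obtaining the $\lambda$-free version of \eqref{estimate of value}; your argument for \eqref{estimate of derivative} and \eqref{one-quarter theorem} is fine as written. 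You should flag that the printed \eqref{estimate of value} should read without the $\lambda$ in the numerators (or, equivalently, with $|f(z)-f(z_0)|/r$ in place of the difference quotient and at $|z-z_0|=\lambda r$), and note that the application in Section \ref{generalcase} only uses the resulting inclusion up to a harmless constant, so nothing downstream is affected.
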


\section{Construction of Speiser functions}\label{CS}

We construct meromorphic functions in the Speiser class with any prescribed finite order of growth. Since the construction is explicit, the "regularity" of distribution of poles enables us to compute the exact values of Hausdorff dimensions of the escaping sets for these functions. We will separate our construction into three cases. The first case deals with the construction of functions whose orders are equal to $2$ in Section \ref{qc-gluing}. The method will then be used to construct functions  whose orders lie in $(0,2)$ in Section \ref{sec3.2} and functions with orders in $(2,\infty)$ in Section \ref{sec3.3}.

\medskip
Before construction, first we recall the definition of Weierstra{\ss} $\wp$-functions. For $w_i\in\c$ ($i=1,2$) such that $w_1$ is not a real multiple of $w_2$, consider the following lattice defined as
$$\Lambda=\left\{\,m\,w_1+n\,w_2:\, m, n\in\mathbb{Z}\,\right\}.$$
Then the the Weierstra{\ss} elliptic function with respect to the lattice $\Lambda$ is defined by
$$\wp_{\Lambda}(z)=\frac{1}{z^2}+\sum_{w\in\Lambda\setminus\{0\}}\left(\frac{1}{(z-w)^2}-\frac{1}{w^2}  \right).$$
Weierstra{\ss} $\wp$-functions belong to the Speiser class $\s$. More precisely, it has no asymptotic values and four critical values which are
$$\wp_{\Lambda}\left(\frac{w_1}{2}\right),\, \wp_{\Lambda}\left(\frac{w_2}{2}\right),\, \wp_{\Lambda}\left(\frac{w_1+w_2}{2}\right)~\text{~and~}~ \wp_{\Lambda}(0)=\infty.$$
Every lattice point is a double pole. For simplicity, we will write $\wp$ instead of $\wp_{\Lambda}$ if the lattice is clear from the context.

To prove Theorem \ref{thm3}, we will need to show that Speiser functions constructed below are quasiconformally equivalent. In general, it is difficult to tell whether or not two given functions are equivalent. One result that could be useful is a theorem of Teichm\"uller (\cite{teichmuller3}), saying that Speiser functions with the same "combinatorial structure" are quasiconformally equivalent. However, we will take another route, since our construction is explicit and thus enables us to show the equivalence directly.

\smallskip
The following observation for Weierstra{\ss} elliptic functions is folklore for experts in the field and the idea of proof will also be useful later on. We outline a proof below.

\begin{obs}[All $\wp$-functions are equivalent]\label{obsproof}
All Weierstra{\ss} elliptic functions are quasiconformally equivalent.
\end{obs}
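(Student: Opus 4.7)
The plan is to produce the equivalence by realising both $\wp$-functions as the same branched covering of $\hc$ up to an $\R$-linear change of coordinates on the source. Concretely, given two lattices $\Lambda_1$ and $\Lambda_2$ with bases $(w_1^{(1)},w_2^{(1)})$ and $(w_1^{(2)},w_2^{(2)})$, let $\psi:\c\to\c$ be the unique $\R$-linear map sending $w_i^{(1)}\mapsto w_i^{(2)}$ for $i=1,2$. Then $\psi$ is a $\c$-linear or antilinear perturbation of a linear map; in any case it is an $\R$-linear bijection, hence affine and quasiconformal with dilatation independent of $z$. By construction $\psi(\Lambda_1)=\Lambda_2$, and since $\psi$ is $\R$-linear one also has $\psi(-z)=-\psi(z)$, so $\psi$ intertwines the two actions of $\Lambda_j\rtimes\{\pm 1\}$.

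The next step is to define $\varphi$ on the target. For $w\in\hc$, choose any $z\in\c$ with $\wp_{\Lambda_1}(z)=w$ and set $\varphi(w)=\wp_{\Lambda_2}(\psi(z))$. Well-definedness is the main point: two preimages $z,z'$ of $w$ under $\wp_{\Lambda_1}$ differ by an element of $\Lambda_1\rtimes\{\pm 1\}$, and because $\psi$ is $\R$-linear and maps $\Lambda_1$ onto $\Lambda_2$, their images under $\psi$ differ by an element of $\Lambda_2\rtimes\{\pm 1\}$; the relation $\wp_{\Lambda_2}(u)=\wp_{\Lambda_2}(u')$ whenever $u\equiv\pm u'\!\pmod{\Lambda_2}$ then gives $\wp_{\Lambda_2}(\psi(z))=\wp_{\Lambda_2}(\psi(z'))$. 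With $\varphi$ so defined, the diagram $\varphi\circ\wp_{\Lambda_1}=\wp_{\Lambda_2}\circ\psi$ commutes by construction, and $\varphi$ fixes $\infty$ since the poles of $\wp_{\Lambda_j}$ sit exactly at $\Lambda_j$.

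It remains to verify that $\varphi$ is quasiconformal. Let $V\subset\hc$ be the complement of the (finite) set of critical values of $\wp_{\Lambda_1}$ together with $\infty$. Over $V$ each $\wp_{\Lambda_j}$ is a local biholomorphism, so locally $\varphi=\wp_{\Lambda_2}\circ\psi\circ\wp_{\Lambda_1}^{-1}$ is a composition of a holomorphic map, the affine quasiconformal map $\psi$, and a holomorphic map, and hence is quasiconformal on $V$ with the same constant as $\psi$. The exceptional set consists of finitely many isolated points in $\hc$, and $\varphi$ is continuous across them by the explicit definition above; the standard removable singularity theorem for quasiconformal maps then lets us extend $\varphi$ to a global quasiconformal self-homeomorphism of $\hc$. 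Together with the commuting relation above this gives the equivalence.

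The only potentially delicate step is the well-definedness of $\varphi$, which relies crucially on the fact that $\psi$ is $\R$-linear (so that it respects both the lattice translation and the involution $z\mapsto -z$); everything else is essentially formal, since the affine map $\psi$ automatically has bounded dilatation and the removability of the finitely many branch values for quasiconformal maps is standard.
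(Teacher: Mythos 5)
Your argument is correct, but it goes in the opposite direction from the paper's. The paper first chooses a quasiconformal map $\varphi$ on the target sphere (built on a quarter-domain $A_1'\to A_2'$, sending the three finite critical values of $\wp_1$ to those of $\wp_2$ and fixing $\infty$), then lifts it through the $\wp$-functions to get $\psi$ on fundamental parallelograms and extends both by reflection. You instead start from the canonical $\R$-linear map $\psi$ intertwining the two lattices, push it down through the branched covers $\wp_{\Lambda_j}$ to define $\varphi$, and invoke removability of isolated points to see $\varphi$ is globally quasiconformal. Your route is more explicit and avoids both the reflection step and the unspecified choice of $\varphi$: because $\psi$ is affine it is automatically quasiconformal with constant dilatation, and the well-definedness of $\varphi$ is precisely that $\psi$ conjugates $\Lambda_1\rtimes\{\pm1\}$ to $\Lambda_2\rtimes\{\pm1\}$, which you check. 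The one point you should make explicit is orientation: an arbitrary $\R$-linear bijection need not be orientation-preserving (and the paper's definition of quasiconformal requires it), but this is harmless since you may always replace the basis $\bigl(w_1^{(2)},w_2^{(2)}\bigr)$ by $\bigl(w_2^{(2)},w_1^{(2)}\bigr)$ to flip the sign of the determinant. The paper's top-down construction, while less canonical here, is the one that generalizes to the gluing arguments in Sections 3.1--3.3 (see Theorem \ref{equd}), where no natural affine $\psi$ exists; your bottom-up argument is the natural one for the Observation itself.
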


\begin{proof}[Proof outline]
Let $\wp_1$ and $\wp_2$ be two Weierstra{\ss} elliptic functions with respect to corresponding lattices
$$\Lambda_1=\left\{m\tau_1+n\tau_2:\,m,\,n\in\mathbb{Z} \right\},$$
$$\Lambda_2=\left\{m\omega_1+n\omega_2:\,m,\,n\in\mathbb{Z} \right\}.$$
Suppose that the finite critical values of $\wp_i$ are $e_{j}^{i}$ for $i=1,\,2$ and $j=1,\,2,\,3$. Denoted by $A_1$ be the parallelogram formed by $\{0,\tau_1/2,\tau_2/2, (\tau_1+\tau_2)/2\}$, and by $A_2$ the parallelogram formed by $\{0,\omega_1/2,\omega_2/2,(\omega_1+\omega_2)/2\}$. Note that $A_1$ is conformally mapped by $\wp_1$ onto a domain $A'_1=\wp_1(A_1)$ bounded by a curve passing through $e_{j}^{1}$ for all $j$ and $\infty$. The same is true for $A_2$. Put $A'_2=\wp_2(A_2)$. Then there exists a quasiconformal homeomorphism $\varphi: A'_1\to A'_2$ fixing $\infty$ and sending $e_{j}^{1}$ to $e_{j}^{2}$ for $j=1,2,3$.

Now let $z\in A_1$. Then we can define a quasiconformal homeomorphism $\psi:A_1\to A_2$ by sending $z$ to $\wp_{2}^{-1}(\varphi(\wp_1(z)))$. By reflection, one can extend $\varphi$ to the whole sphere as a quasiconformal homeomorphism sending all critical values of $\wp_1$ to those of $\wp_2$, and thus can be used to extend $\psi$ to the whole plane. We still use $\varphi$ and $\psi$ as their extensions. By definition of $\psi$, we clearly have
$$\wp_{2}\circ\psi=\varphi\circ\wp_1,$$
which is the quasiconformal equivalence as claimed.
\end{proof}

\subsection{A quasiconformal surgery -- functions of order two} \label{qc-gluing}

We show here how to glue two Weierstra{\ss} $\wp$-functions using quasiconformal mappings to obtain Speiser functions of order $2$. In our later constructions, we will use this technique several times.

Let $\wp_1$ and $\wp_2$ be two Weierstra{\ss} $\wp$-functions whose lattices are respectively given by
$$\Lambda_1=\left\{m+n\tau_1:\,m,\,n\in\mathbb{Z} \right\},$$
$$\Lambda_2=\left\{m+n\tau_2:\,m,\,n\in\mathbb{Z} \right\}.$$
So they have a common period, $1$,  along the real axis. We assume, without loss of generality, that $\Im(\tau_1), \Im(\tau_2) > 0$. Roughly speaking, we consider $\wp_1$ on the upper half-plane $\h^{+}$ and $\wp_2$ on the lower half-plane $\h^{-}$, and glue them together along the real axis $\mathbb{R}$ using a quasiconformal surgery.  To make this argument work, we have to slightly modify Weierstra{\ss} $\wp$-functions. Then the surgery will take place in a horizontal strip around the real axis. The finiteness of the logarithmic area of this strip will give us sufficient control over the asymptotic behaviours of the constructed meromorphic function by the theorem of Teichm\"uller, Wittich and Belinskii mentioned in the last section.

\medskip
Put
\begin{equation}\label{symbol1}
v_1=\wp_1(0),\,\,v_2=\wp_1\left(\frac{1}{2}\right),\,\,v_3=\wp_1\left(\frac{\tau_1+1}{2}\right),\,\,v_4=\wp_1\left(\frac{\tau_1}{2}\right)
\end{equation}
and
\begin{equation}\label{symbol2}
w_1=\wp_2(0),\,\,w_2=\wp_2\left(\frac{1}{2}\right),\,\,w_3=\wp_2\left(\frac{\tau_2+1}{2}\right),\,\,w_4=\wp_2\left(\frac{\tau_2}{2}\right).
\end{equation}
Thus $v_1=w_1=\infty$. We start from the following result.

\begin{proposition}\label{prop1}
There exists an analytic closed curve $\gamma$ in the plane separating the critical values $v_3, v_4, w_3$ and $w_4$ from all other critical values of $\wp_1$ and $\wp_2$ such that, for each $i$, there exists an unbounded analytic curve $\beta_i$ such that $\wp_i(\beta_i)=\gamma$. Moreover, $\beta_i$ is periodic with period $1$ (i.e., $z\in\beta_i$ implies $z+1\in\beta_i$).
\end{proposition}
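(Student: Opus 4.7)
The strategy I intend to follow is to specify $\gamma$ first as a suitable Jordan curve in $\c$ and then recover $\beta_i$ from the preimage $\wp_i^{-1}(\gamma)$, viewed on the torus $T_i=\c/\Lambda_i$. This reduces the proposition to two tasks: the existence of a single $\gamma$ that simultaneously separates the two prescribed groups of critical values for both $\wp_1$ and $\wp_2$, and the verification that its preimage on each $T_i$ lies in the correct homology class so that its lift to $\c$ is unbounded and $1$-periodic.

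For the first task, I observe that $\{v_3,v_4,w_3,w_4\}$ and $\{v_2,w_2\}$ are disjoint finite subsets of $\c$ (possibly after a harmless generic modification of the lattices to rule out accidental coincidences between critical values). Any two disjoint finite subsets of $\c$ can be separated by a real-analytic Jordan curve: one can, for instance, take the boundary of a thin smooth neighborhood of a tree joining $v_3,v_4,w_3,w_4$ that avoids $v_2,w_2$, and perturb it into analytic form. I take $\gamma$ to be such a curve, with bounded interior $D_\gamma$ containing $\{v_3,v_4,w_3,w_4\}$ and unbounded exterior containing $\{v_2,w_2,\infty\}$.

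For each $i\in\{1,2\}$, I pass to $T_i$, on which $\wp_i$ descends to a degree-two branched covering onto $\hc$ ramified at the four half-lattice points $0,\,1/2,\,\tau_i/2,\,(1+\tau_i)/2$, which map respectively to $\infty,\,v_2,\,v_4,\,v_3$ when $i=1$ and analogously when $i=2$. Since $D_\gamma$ contains exactly two of these critical values, the Riemann-Hurwitz formula applied to the restriction of this descended covering over $D_\gamma$ yields
$$\chi(A_i)=2\chi(D_\gamma)-2=0,$$
where $A_i\subset T_i$ is the preimage of $D_\gamma$. A brief connectedness check (a two-component preimage would consist of two disks with total Euler characteristic $2$, contradicting $\chi=0$) shows that $A_i$ is a single open annulus. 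Hence $\partial A_i$, which coincides with the preimage of $\gamma$ on $T_i$, consists of two disjoint simple closed essential curves lying in a single primitive class of $H_1(T_i,\mathbb{Z})$.

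The crux is to identify this class as the horizontal one $(1,0)\in H_1(T_i,\mathbb{Z})$, corresponding to the real period. The annulus $A_i$ is invariant under the hyperelliptic involution $\sigma_i\colon z\mapsto -z$ on $T_i$ and contains exactly two $\sigma_i$-fixed points, namely $\tau_i/2$ and $(1+\tau_i)/2$, which sit at the common imaginary height $\Im(\tau_i)/2$ and are horizontally separated by $1/2$; the remaining two fixed points $0$ and $1/2$ lie in the complementary annulus. The preimage under $\wp_i$ of any embedded arc in $D_\gamma$ joining $v_3$ to $v_4$ is a closed $\sigma_i$-invariant loop in $A_i$ passing through both of these branch points and serving as the core of $A_i$; the horizontal separation of the two fixed points together with $\sigma_i$-invariance forces this core to represent the $(1,0)$ class. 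Consequently $\partial A_i$ lies in the $(1,0)$ class, and lifting any one of its two components to the universal cover $\c\to T_i$ produces an unbounded analytic curve of period $1$, which I take as $\beta_i$; analyticity is automatic because $\gamma$ is real-analytic and avoids the critical values of $\wp_i$. The main obstacle I anticipate is making the homology identification fully rigorous --- in particular, ruling out the possibility that $A_i$ winds around $T_i$ vertically or in some mixed direction --- which I would handle by first checking the claim for $\gamma$ taken very close to a short arc joining $v_3$ to $v_4$ (in which case $A_i$ visibly lies in a thin horizontal strip of a fundamental parallelogram), and then invoking isotopy invariance of the homology class under deformations of $\gamma$ within $\hc\setminus\sing(\wp_i^{-1})$.
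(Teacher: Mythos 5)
Your argument is correct in outline but takes a genuinely different route from the paper's. The paper works entirely in $\c$ and is constructive: it fixes a Jordan curve $\Gamma$ on $\hc$ through all eight critical values, observes that $\wp_i^{-1}(\Gamma)$ tiles the plane into quadrilaterals each mapped conformally onto one of the two complementary Jordan domains $A,B$ of $\Gamma$, insists that $\gamma$ meet $\Gamma$ in exactly two points so that $\gamma$ splits into an arc in $A$ and an arc in $B$, pulls these two arcs back into a pair of adjacent quadrilaterals $P,Q$ sharing the edge through $1/2$ and $(\tau_1+1)/2$, joins them at that edge, and extends by the $1$-periodicity of $\wp_1$. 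Your route instead descends to the torus $T_i=\c/\Lambda_i$, uses Riemann--Hurwitz to recognize $\wp_i^{-1}(D_\gamma)$ as an annulus, and computes the homology class of its core. What you gain is a conceptual explanation (an Euler-characteristic count plus a homology class) rather than an explicit tiling; what you give up is that the paper's quadrilateral tiling is reused later (to define $h_i$ and the correction map $k$), so the paper's more hands-on argument is doing double duty. One point you should tighten: the involution/fixed-point observation by itself does not force the core into the class $(1,0)$ --- different embedded arcs from $v_3$ to $v_4$ in $\hc\setminus\{v_1,v_2\}$, differing by winding around $v_1$ or $v_2$, pull back to $\sigma_i$-invariant loops through the same two fixed points but in different homology classes, so the ``horizontal separation of the fixed points'' is not enough. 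You correctly flag this, and the deformation argument you propose is the actual content; it does work, but only because the proposition is an existence statement and you get to choose $\gamma$. You should therefore not phrase it as ``isotopy invariance for $\gamma$'' in general (separating curves in the same partition class on the four-punctured sphere need not be isotopic --- they can differ by Dehn twists, and the homology class of $\wp_i^{-1}(\gamma)$ does see this), but rather commit to your specific $\gamma$ (the boundary of a thin tree neighborhood), arrange the tree so that its branches to $w_3,w_4$ avoid $v_1,v_2$ and the branches to $v_3,v_4$ avoid $w_1,w_2$, and exhibit the contraction of the tree to the arc $[v_3,v_4]$ (resp.\ $[w_3,w_4]$) as the isotopy that reduces to the model case for $\wp_1$ (resp.\ $\wp_2$).
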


\begin{proof}
Let $\Gamma$ be a closed Jordan curve on the Riemann sphere $\hc$ passing through all critical values $v_i$ and $w_i$ of $\wp_1$ and $\wp_2$ for $i=1,\dots,4$. Moreover, $\Gamma$ passes through each critical value exactly once, and we assume that the order of critical values for each $\wp$-function is cyclic modulo their index. When one goes along $\Gamma$, the index one meets for each $\wp_i$ is either increasing or decreasing. We also require that $\Gamma$ (when going in one direction in the finite plane) first passes through $v_2$ and $w_2$, and then the rest. $\Gamma$ decomposes the sphere into two Jordan domains, say, $A$ and $B$.

Consider the boundary $L_1$ of the parallelogram $P_1$ consisting of vertices at $0, 1/2, (1+\tau_1)/2$ and $\tau_1/2$. Then $\gamma_1 = \wp_1(L_1)$ is a simple closed curve on $\hc$. Suppose that the two components of $\hc\setminus\gamma_1$ are $A_1$ and $B_1$. Then $P_1$ is mapped conformally onto one of them, say $A_1$. Note that $\Gamma$ is homotopic to $\gamma_1$ relative to $v_1,\,v_2,\,v_3$ and $v_4$. Then there is a quadrilateral $P$ passing through $0, 1/2, (1+\tau_1)/2$ and $\tau_1/2$, which is a deformation of the parallelogram $P_1$ relative to all critical points of $\wp_1$ and is mapped conformally onto $A$. We should also mention that the orders of critical values on the boundary of $A$ and $A_1$ are the same.

The translated parallelogram $Q_1$ with vertices at $1/2, (1+\tau_1)/2, 1+\tau_1/2$ and $1$ is mapped under $\wp_1$ conformally onto $B_1$. And in the same way as above there is another quadrilateral $Q$ passing the vertices of $Q_1$ which is mapped conformally onto $B$. See Figure \ref{claimcon_1}. The periodicity of $\wp_1$ implies that one can partition the whole plane into quadrilaterals constructed above. In other words, $\wp_{1}^{-1}(\Gamma)$ is a deformed graph of $\wp_{1}^{-1}(\gamma_1)$ (which is a graph whose edges are straight segments).

Using the same analysis above to $\wp_2$, we are also able to define a partition of the plane into quadrilaterals induced by considering $\wp_{2}^{-1}(\Gamma)$, which is a graph homotopic to $\wp_{2}^{-1}(\gamma_2)$. Here $\gamma_2 = \wp_2(L_2)$ and $L_2$ is the corresponding boundary of the parallelogram $P_2$ with vertices at $0, 1/2, (1+\tau_2)/2$ and $\tau_2/2$. 

Now let $\gamma$ be a non self-intersecting closed analytic curve in the plane that separates critical values $v_3, v_4$ and $w_3, w_4$ from all other critical values of $\wp_i$ for $i=1,2$. Moreover, we require that $\gamma$ intersects with $\Gamma$ at exactly two points. This implies that $\Gamma$ cuts the curve $\gamma$ into two components such that one of them lies in $A$  and the other lies in $B$.

\begin{figure}[h] 
	\centering
	\includegraphics[width=12cm]{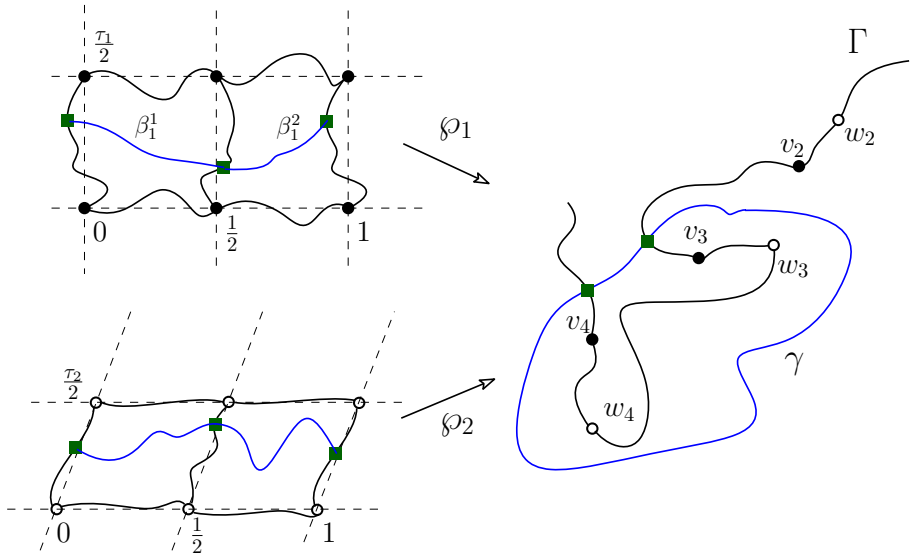}
	\caption{$\Gamma$ is a Jordan curve passing through critical values of $\wp_i$ in cyclic order. For each $i$,  every component of $\c\setminus\wp_{i}^{-1}(\Gamma)$ is a quadrilateral connecting four critical points of $\wp_i$. Dashed lines are original lattice of $\wp_i$, while black solid lines are $\wp_{i}^{-1}(\Gamma)$.}
	\label{claimcon_1}
\end{figure}

In the following we construct the analytic curve $\beta_1$ which is periodic with period one and satisfies $\wp_1(\beta_1)=\gamma$. This follows from the fundamental relations between quadrilaterals induced by $\Gamma$ and $A$ and $B$ established above: each quadrilateral is mapped conformally onto one of $A$ and $B$. We construct $\beta_1$ piece by piece. That $\wp_1: P\to A$ is conformal ensures that there is an analytic curve, denoted by $\beta_{1}^{1}$ in $P$, which is mapped to $\gamma\cap A$. Similarly, there is an analytic curve, say $\beta_{1}^{2}$, in $Q$, which is mapped by $\wp_1$ to $\gamma\cap B$. Then since $\gamma$ does not pass through any critical values of $\wp_1$, the curves $\beta_{1}^1$ and $\beta_{1}^{2}$ share a common endpoint which lies on the common boundary of $P$ and $Q$, which is an edge connecting $1/2$ and $(\tau_1 +1)/2$. Then by periodicity of $\wp_1$, we can extend $\beta_{1}^{1}\cup\beta_{1}^{2}$ periodically, with period $1$, along the horizontal direction. We denote by $\beta_1$ the extended curve. Therefore, $\beta_1$ satisfies our desired properties: it is an analytic curve which is periodic of period one and $\wp_1(\beta_1)=\gamma$.

Exactly the same argument applies to the construction of an analytic curve $\beta_2$, so we omit details and only draw the conclusion that $\beta_2$ is as required.
\end{proof}

\bigskip
Two domains thereby arise from the constructions of $\beta_i$: one in a upper half-plane bounded below by the curve $\beta_1$, and another one in a lower half-plane bounded above by $\beta_2$. Let these two domains be denoted by $\tilde{H}_1$ and $\tilde{H}_2$ respectively. Moreover, denote by $c_i$ one of the intersection points of $\beta_i$ with the imaginary axis. (It could be that each $\beta_i$ intersects the imaginary axis at more than one point and it suffices here to choose any one of them.) We also put
$$H_1=\left\{z: \Im(z)>\Im(c_1)\right\},$$
$$H_2=\left\{z: \Im(z)<\Im(c_2)\right\}.$$

\begin{figure}[h]
	\centering
	\includegraphics[width=13cm]{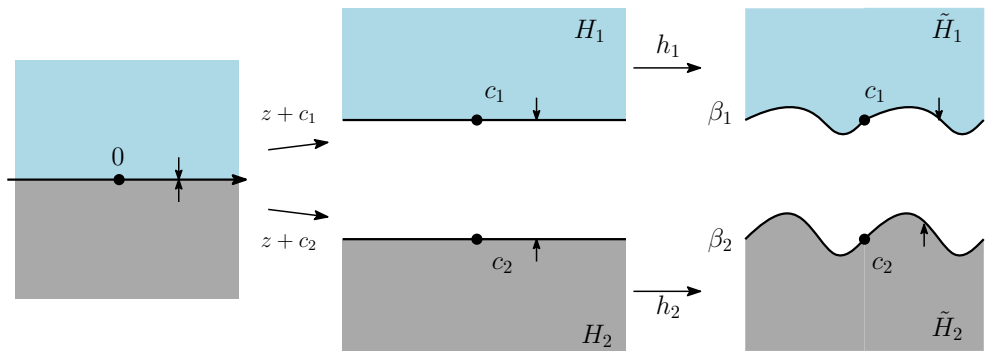}
	\caption{Idea of construction for Speiser functions of order two. The obstruction arises from the discontinuity when one tries to paste together two Weierstra{\ss} elliptic functions, which is then overcome by using a quasiconformal surgery.}
	\label{idea}
\end{figure}

We will need to construct quasiconformal mappings $h_1$ and $h_2$ sending $H_1$ and $H_2$ to $\tilde{H}_1$ and $\tilde{H}_2$ correspondingly such that 
\begin{equation}\label{aim}
\wp_1(h_1(x+c_1))=\wp_2(h_2(x+c_2))\,~\,\text{whenever}\,\, x\in\mathbb{R}.
\end{equation}
See Figure \ref{idea} for a sketch of the idea of construction. For simplicity, we only focus ourselves on the construction of $h_2$ . The construction of $h_1$ goes in the same manner and so we omit most of the details if possible.  

\smallskip
We now proceed with the construction of a quasiconformal mapping 
\[
 h_2: H_2 \rightarrow \tilde{H}_{2}.
\]
This map will be defined piecewise. Moreover, it will be quasiconformal only in a horizontal strip and conformal elsewhere. First we notice that the periodicity of the curve $\beta_2$ implies that one can choose $a$ such that
$$a<\min_{z\in\beta_2}\Im (z).$$
We put
$$l_1=\left\{\,z\in \tilde{H}_2:\,\Im z=a\, \right\}.$$
Moreover, the domain in $\tilde{H}_2$ bounded by $\beta_2$ and $l_1$ is denoted by $S_1$.

We first show that the following holds.
\begin{lemma}\label{pecon}
 There exists $a'<\min_{z\in\beta_2}\Im (z)$ such that with
 $$S'_{1}=\left\{z:\,  a'<\Im(z)<\Im(c_2)\,\right\},$$
 there exists a conformal map
 \begin{equation}\label{phi21}
 \phi_{2,1}: S'_1\rightarrow S_1
 \end{equation}
 fixing three boundary points $c_2$ and $\pm\infty$ and satisfying $\phi_{2,1}(z+1)=\phi_{2,1}(z)+1$ for any $z\in \overline{S'_1}$.
 \end{lemma}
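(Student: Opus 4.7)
The plan is to exploit the period-one symmetry by passing to the quotient under $z\mapsto z+1$ via the exponential covering $T(z)=e^{2\pi i z}$, invoke the Riemann mapping theorem for doubly connected domains, and lift the resulting isomorphism back to a periodic conformal map of strips. First I would observe that we may, if necessary, replace the originally chosen $a$ by a more negative value while retaining the property $a<\min_{z\in\beta_2}\Im z$; the need for this extra room will become clear below.

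Under $T$, the horizontal strip $S'_1=\{a'<\Im z<\Im c_2\}$ descends to the round annulus $A':=\{w:e^{-2\pi\Im c_2}<|w|<e^{-2\pi a'}\}$, whose modulus equals $\Im c_2 - a'$. Since $\beta_2$ is $1$-periodic, $T(\beta_2)$ is an analytic Jordan curve, so $T$ also descends $S_1$ to a doubly connected domain $B:=T(S_1)$ with inner boundary $T(\beta_2)$ and outer boundary the round circle $\{|w|=e^{-2\pi a}\}$; let $M>0$ denote its modulus. By the conformal classification of annuli, a conformal isomorphism $\Phi:A'\to B$ exists if and only if $\Im c_2 - a' = M$, and this forces the choice $a'=\Im c_2 - M$. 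From the inclusion $\{a<\Im z<\min_{z\in\beta_2}\Im z\}\subset S_1$ and the monotonicity of modulus one gets $M\geq \min_{z\in\beta_2}\Im z - a$, whence $a'\leq \Im c_2 - \min_{z\in\beta_2}\Im z + a$; if the initial $a$ is chosen to satisfy the stronger inequality $a<2\min_{z\in\beta_2}\Im z - \Im c_2$ (allowed by the flexibility noted above), then $a'<\min_{z\in\beta_2}\Im z$, as required.

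The map $\Phi$ is unique up to rotations of $A'$; this one real parameter of freedom is spent to normalize $\Phi(T(c_2))=T(c_2)$, which is permissible because $c_2\in\beta_2$ places $T(c_2)$ on the inner boundary of $B$. Since $T$ is a regular covering with deck group $\langle z\mapsto z+1\rangle$ on both sides, $\Phi$ lifts uniquely to a holomorphic map $\phi_{2,1}:S'_1\to S_1$ satisfying $\phi_{2,1}(c_2)=c_2$. Orientation and boundary preservation by $\Phi$ identify the induced map on fundamental groups with the generator, so that $\phi_{2,1}(z+1)-\phi_{2,1}(z)$ is the constant integer $+1$, giving the required periodicity $\phi_{2,1}(z+1)=\phi_{2,1}(z)+1$. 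The fixing of $\pm\infty$ is then automatic: $\Im\phi_{2,1}$ is $1$-periodic in $\Re z$ and thus bounded, while iterating the period relation yields $\phi_{2,1}(z+n)=\phi_{2,1}(z)+n\to\pm\infty$ as $n\to\pm\infty$.

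The chief obstacle is the inequality $a'<\min_{z\in\beta_2}\Im z$, which is not automatic from the uniformization: the modulus $M$ of $B$ is prescribed by $S_1$, and if $a$ were only slightly below $\min_{z\in\beta_2}\Im z$ then $M$ could be too small. The remedy is to retreat $a$ to a sufficiently negative value before defining $l_1$ and $S_1$; all other steps are standard annular uniformization combined with a covering-space lift.
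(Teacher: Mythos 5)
Your proof is correct, and it takes a genuinely different route from the paper's. The paper first applies the Riemann mapping theorem to produce a conformal map $\psi:\tilde S_1=\{a<\Im z<\Im c_2\}\to S_1$ normalized at the three boundary points $c_2,\pm\infty$, then post-composes with the real affine rescaling $z\mapsto (A-c_2)(z-c_2)+c_2$ (where $\psi(A)=c_2+1$) so that $\phi_{2,1}$ also fixes $c_2+1$, and finally obtains the full period relation from the uniqueness of the three-point-normalized map applied to $\tilde\psi(z)=\phi_{2,1}(z+1)-1$. You instead pass to the quotient annuli under $T(z)=e^{2\pi iz}$, solve the problem there via the conformal classification of doubly connected domains and the one-parameter rotation freedom, and lift the solution back. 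Both arguments are about equally elementary; the advantage of yours is that it makes the value $a'=\Im c_2-\operatorname{mod}\bigl(T(S_1)\bigr)$ completely transparent. In particular you correctly noticed that the asserted inequality $a'<\min_{z\in\beta_2}\Im z$ does \emph{not} follow automatically from the original choice $a<\min_{z\in\beta_2}\Im z$ (the paper's formula $a'=\Im c_2-(\Im c_2-a)/(A-c_2)$ leaves this unverified when $c_2$ is not the lowest point of $\beta_2$), and you remedied it by retreating $a$; in fact only the weaker inequality $a'<\Im c_2$ is ever used in the subsequent construction, so the gap in the paper is harmless, but your fix is the right way to establish the stated form of the lemma.
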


\begin{proof}
Let 
$$\tilde{S}_1=\left\{z:\, a<\Im(z)<\Im(c_2)\,\right\}.$$
Then by the Riemann mapping theorem there exists a conformal map
$$\psi: \tilde{S}_1 \to S_1$$
which fixes three boundary points $c_2$ and $\pm\infty$. Let $A$ be such that $\psi(A)=c_2 +1$ and put $a'=(a+ic_2)/(A-c_2)-ic_2$. Then the map
$$\phi_{2,1}(z)=\psi\left((A-c_2)(z-c_2)+c_2\right)$$
is a conformal map from $S'_1$ onto $S_1$. It also follows that $\phi_{2,1}$ fixes boundary points $c_2,\, c_2+1$ and $\pm\infty$. We show next that $\phi_{2,1}(z+1)=\phi_{2,1}(z)+1$. For this, consider
$$\tilde{\psi}(z)=\phi_{2,1}(z+1)-1.$$
Then one can deduce that $\tilde{\psi}$ also fixes boundary points $c_2$ and $\pm\infty$. By uniqueness, $\tilde{\psi}(z)=\phi_{2,1}(z)$. So $\phi_{2,1}$ has the property as required.
\end{proof}

\begin{figure}[h]
	\centering
	\includegraphics[width=14cm]{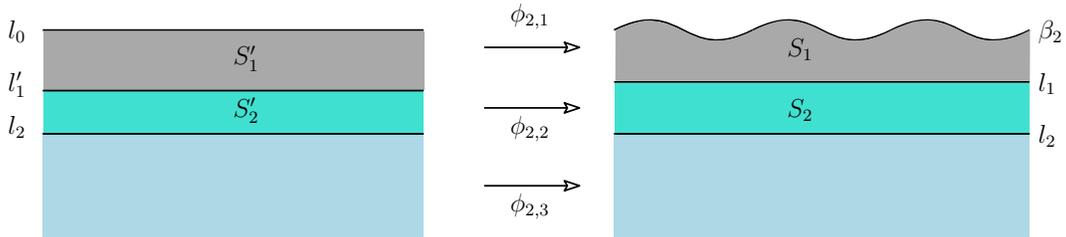}
	\caption{The quasiconformal mapping $\phi_2: H_2\to\tilde{H}_2$ consists of three mappings: $\phi_{2,1}$ is conformal, sending the horizontal strip $S_1'$ to the curvilinear strip $S_1$; the map $\phi_{2,2}$ interpolates between the extension of $\phi_{2,1}$ to $l'_1$ and the identity map on $l_2$; finally $\phi_{2,3}$ is the identity map on the rest of $H_2$.}
	\label{qc}
\end{figure}

Let $a'$ be as found in the above Lemma \ref{pecon}, we can choose a constant $b<\min\{a, a'\}$, and define
$$l_0=\left\{\,z:\,\Im(z)=\Im(c_2)\, \right\},$$
$$l_2=\left\{\,z\in {H}_2:\,\Im z=b\, \right\},$$
$$l'_1=\left\{\,z\in {H}_2:\,\Im z=a'\, \right\}.$$
The strip between $l'_1$ and $l_2$ is denoted by $S'_2$ and the strip between $l_1$ and $l_2$ is denoted by $S_2$. See Figure \ref{qc}. Our next step is the construction of a quasiconformal map 
\begin{equation}\label{phi22}
\phi_{2,2}: S'_2\to S_2.
\end{equation}
For this purpose, we denoted by $\chi_2$ the boundary extension of the conformal map $\phi_{2,1}$ to $l'_1$. It then follows from Lemma \ref{pecon} that $\chi_2: l'_1\to l_1$ is a $\mathcal{C}^1$-diffeomorphism and moreover $\chi_2(z+1)=\chi_2(z)+1$ for any $z\in l'_1$. Then our expected quasiconformal mapping $\phi_{2,2}$ will be constructed as the linear interpolation between $\chi_2$ on $l'_1$ and the identity map on $l_2$.  To define this map, it suffices to define a quasiconformal mapping, with $T'=S'_2-ib$ and $T=S_2-ib$,
$$L: T'\to T,$$
which are linear interpolation between the identity map, denoted by $\chi_1$, on the real axis $\mathbb{R}$ and another map $\chi_2(z+ib)-ib$ on the boundary $l'_1-ib$ of $T'$. If we define $\tilde{\chi}_1(x)=\chi_1(x)$ and $\tilde{\chi}_2(x)=\chi_2(x+ia')-ia$, then both of them are increasing $\mathcal{C}^1$-diffeomorphisms of the real axis. So we can define
$$L(x+iy)=\left(1-\frac{y}{a'-b} \right)\tilde{\chi}_1(x)+\frac{y}{a'-b}\tilde{\chi}_2(x)+i\frac{a-b}{a'-b}y.$$
To show that $L$ is quasiconformal, it is sufficient to check the Jacobian of $L$ is non-zero. For this map, we see by simple computation that its Jacobian is equal to
$$\frac{a-b}{a'-b}\left(\left(1-\frac{y}{a'-b} \right)\tilde{\chi}'_1(x)+\frac{y}{a'-b}\tilde{\chi}'_2(x)\right),$$
which is strictly bigger than zero since both $\tilde{\chi}_1$ and $\tilde{\chi}_2$ are increasing $\mathcal{C}^1$-diffeomorphisms. So, $L$ is a $\mathcal{C}^1$-diffeomorphism, in particular, a quasiconformal map. Denote by $K_2$ the quasiconformal constant of $L$. Then we can put
$$\phi_{2,2}(z)=L(z-ib)+ib,$$
which is a $\mathcal{C}^1$-diffeomorphism, and thus in particular, a quasiconformal map. The quasiconformal constant of $\phi_{2,2}$ is $K_2$.

We also define
\begin{equation}\label{phi23}
\phi_{2,3}: H_2 \setminus \overline{S_1'\cup S'_2}\rightarrow \tilde{H}_2 \setminus \overline{S_1\cup S_2}
\end{equation}
to be the identity map.

With the above constructions given in \eqref{phi21},  \eqref{phi22} and \eqref{phi23}, we can define a map $\phi_2: H_2 \to \tilde{H}_2$ as follows:
\begin{align*}
\phi_2(z)=
\begin{cases}
\,\phi_{2,1}(z) &\mbox{if}~\,z\in S_{1}',\\[0.3em]
\,\phi_{2,2}(z) &\mbox{if}~\,z\in S_{2}',\\[0.3em]
\,\phi_{2,3}(z) &\mbox{elsewhere.}
\end{cases}
\end{align*}
By construction, $\phi_2$ is $K_2$-quasiconformal. The above construction actually shows that $\phi_2$ is a $\mathcal{C}^1$-diffeomorphism. See Figure \ref{qc}. 

\medskip
Clearly by using the same argument for the construction of $\phi_2$ as above we can construct a map which sending $H_1$ to $\tilde{H}_1$ quasiconformally. Let
$$h_1: H_1\to\tilde{H}_1$$
be the obtained ($\mathcal{C}^1$-diffeomorphic) map, whose quasiconformal constant is denoted by $K_1$. The problem now is that we want to have the property \eqref{aim}, which is not necessarily true because $\wp_1(h_1(x+c_1))$ may not coincide with $\wp_2(\phi_2(x+c_2))$ on the real axis, even though they belong to the same curve $\gamma$ defined earlier (see the proof of the Proposition \ref{prop1}). This means that the two maps $\wp_1\circ h_1(z+c_1)$ and $\wp_2\circ\phi_2(z+c_2)$ cannot extend continuously across each other. To solve this, we change $\phi_2$ a little further (it suffices to change one of them). More specifically, we define a "correction" function on the line $l_0$ in the following way:
\begin{align}
k:\, l_0\,&\rightarrow\, l_0\\
z&\mapsto \left(\phi_{2}^{-1} \circ\wp_{2}^{-1} \circ \wp_1 \circ  h_1 \right) (z-c_2+c_1).
\end{align}
The function is not well defined if one does not fix particular inverse branches of $\wp_2$. However, this is not a problem and the inverse branches are chosen according to how the curve $\beta_2$ is mapped onto $\gamma$ (see the proof of the Proposition \ref{prop1}). Moreover, for a fixed $z\in l_0$, its image $h_1(z-c_2+c_1)$ will lie in a segment connecting two points, say $c_1+n$ and $c_1+(n+1)$ on $\beta_1$ for some $n\in\mathbb{Z}$ (recall that $h_1$ is obtained in the same way as $\phi_2$ and thus has the property that $h_1(z+1)=h_{1}(z)+1$). When we consider a preimage of $\wp_1(h_1(z-c_2+c_1))$ under $\wp_2$, we choose the one lying in between $c_2+n$ and $c_2+(n+1)$. The construction of $\beta_2$ ensures that this can be done. It also follows from the construction of $\beta_i$ that the function $\tilde{k}(x)=k(x+c_2)-c_2$ is an increasing $\mathcal{C}^1$-diffeomorphism of the real axis.

\begin{figure}[h]
	\centering
	\includegraphics[width=13cm]{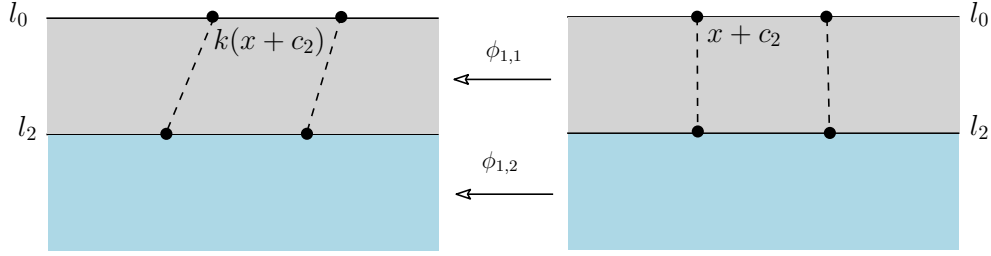}
	\caption{The map $\phi_1$ is a correction map which is used to remove the arising discontinuity. It is constructed by a linear interpolation.}
	\label{qc1}
\end{figure}

Now we consider the linear interpolation between the map $k$ on $l_0$ and the identity map on $ l_2$. This is the same as we have done for $\phi_{2,2}$. By using the map $z\mapsto (z-c_2)$ we can move $l_0$ to the real axis. Then the interpolation between $\tilde{k}$ on the real axis and the identity map on the line $\{z: \Im(z)=b-\Im(c_2)\}$ is given by
$$\tilde{\phi}_{1,1}(x+iy)=\left(1-\frac{y}{b-\Im(c_2)}\right)\tilde{k}(x)+\frac{y}{b-\Im(c_2)}x+iy\quad\text{for~}\,\,b-\Im(c_2)\leq y\leq 0.$$
Then one can see that the Jacobian of $\tilde{\phi}_{1,1}$ is non-zero. Therefore, this gives a $\mathcal{C}^1$-diffeomorphism, and thus by the periodicity, a quasiconformal map, whose quasiconformal constant we denote by $K_3$. Now by setting
$$\phi_{1,1}(z)=\tilde{\phi}_{1,1}(z-c_2)+c_2\quad\text{for~}\,\,b\leq y\leq \Im(c_2),$$
we have defined a $K_3$-quasiconformal map by interpolating between $k$ on $l_0$ and the identity on $l_2$.

Moreover, we put
$$\phi_{1,2}(x+iy)=x+iy.$$
With the above maps $\phi_{1,1}$ and $\phi_{1,2}$ we define a map $\phi_1: H_2\to H_2$ by putting
\begin{align*}
\phi_1(z)=
\begin{cases}
\,\phi_{1,1}(z) &\mbox{if}~\, b \leq\Im z \leq \Im(c_2),\\[0.3em]
\,\phi_{1,2}(z) &\mbox{if}~\,\Im z \leq b.
\end{cases}
\end{align*}
This is clearly a $K_3$-quasiconformal map in $H_2$ (actually a $\mathcal{C}^1$-diffeomorphism). See Figure \ref{qc1}.

Now, replace the map $\phi_2$ by defining
$$h_2=\phi_2 \circ \phi_1.$$
This is a $K_2 K_3$-quasiconformal map sending $H_2$ to $\tilde{H}_2$ (and also a $\mathcal{C}^1$-diffeomorphism). One can check now that the functions $\wp_1 \circ h_1(x+c_1)$ and $\wp_2 \circ h_2(x+c_2)$ agree on the real axis (so we have the property \eqref{aim}). The construction is thereby finished in the sense that we have obtained a quasi-meromorphic function (see Figure \ref{idea})
\begin{equation}\label{newdg}
G(z)=
\left\{ \begin{array}{rl}
\,\wp_1\circ h_1(z+c_1), &\mbox{if}~\, z \in \h^+,\\[0.3em]
\,\wp_2\circ h_2(z+c_2), &\mbox{if}~\,z \in \h^-.
\end{array} \right.
\end{equation}
To recover a meromorphic function one uses the measurable Riemann mapping theorem: There exist a meromorphic function $f$ and a quasiconformal mapping $\phi$ of the plane such that 
\begin{equation}\label{gfphi}
G=f\circ\phi.
\end{equation}

The meromorphic function $f$ belongs to the class $\s$, since we have used (at most) two different Weierstra{\ss} elliptic functions. More precisely, $f$ has at most $7$ critical values in $\hc$ and no asymptotic values. To derive asymptotic behaviours of $f$, we use Lemma \ref{logareaes}. Note that the supporting set of $\phi$ is a horizontal strip and thus has finite logarithmic area. By Lemma \ref{logareaes}, we have
\begin{equation}\label{ascon}
\phi(z)=z+ o(z)
\end{equation}
as $z\to\infty$. This gives us that, for large $r$,
$$n(r,f)\sim n(r,G)\sim 2\,\left(\frac{1}{2}\frac{\pi r^2}{\Im\tau_1}+ \frac{1}{2}\frac{ \pi r^2}{\Im\tau_2}\right)=\pi\left(\frac{1}{\Im\tau_1}+\frac{1}{\Im\tau_2}\right)\,r^2.$$ 
Therefore, it follows from Lemma \ref{tlemmma} that
$$\rho(f)=2.$$
We also claim that
\begin{lemma}
All poles of $f$ are double poles.
\end{lemma}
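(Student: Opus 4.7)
The plan is to prove this by a local topological degree argument, avoiding any direct verification that $\phi$ is conformal at the pole. Let $z_0$ be a pole of $f$ and put $w_0:=\phi^{-1}(z_0)$, so that $w_0$ is a pole of $G=f\circ\phi$. I would assume without loss of generality that $w_0\in\h^+$; then near $w_0$ one has $G(w)=\wp_1(h_1(w+c_1))$, and $\lambda:=h_1(w_0+c_1)$ must be a lattice point of $\Lambda_1$, hence a double pole of $\wp_1$.

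First I would observe that, since every pole of a Weierstra{\ss} elliptic function has order exactly two, the restriction of $\wp_1$ to a sufficiently small disk $V$ around $\lambda$ is a $2$-to-$1$ branched covering onto a neighborhood of $\infty$ in $\hc$, branched only at $\lambda$. Because $w\mapsto h_1(w+c_1)$ is a $\mathcal{C}^1$-diffeomorphism (in particular a homeomorphism) from a neighborhood of $w_0$ onto a neighborhood of $\lambda$, the composition $G=\wp_1\circ h_1(\cdot+c_1)$ is itself a $2$-to-$1$ branched covering near $w_0$, with unique branch point $w_0$. Composing once more with the homeomorphism $\phi^{-1}$ shows that $f=G\circ\phi^{-1}$ is a $2$-to-$1$ branched covering of a neighborhood of $z_0$ onto a neighborhood of $\infty$, with $z_0$ as its only branch point. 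Since $f$ is meromorphic and the local topological degree of a meromorphic map at a pole coincides with the order of that pole, it follows that $z_0$ is a pole of $f$ of order exactly $2$.

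The case $w_0\in\h^-$ is identical, with $\wp_2$ and $h_2$ in place of $\wp_1$ and $h_1$. The essential input is that $f$ is already known to be meromorphic from the application of the measurable Riemann mapping theorem in \eqref{gfphi}, so the purely topological $2$-to-$1$ count forces the pole order to be exactly $2$; no analytic information about the dilatation of $\phi$ or about the precise location of poles relative to the quasiconformal strip is needed. For this reason I do not anticipate any substantial obstacle. The only subtlety worth flagging is that $h_1$, $h_2$ and $\phi$ must be genuine homeomorphisms (and not merely quasiconformal almost everywhere), which is exactly the case by construction, and which is precisely what allows the local degree to multiply across each composition in the argument above.
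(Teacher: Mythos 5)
Your argument is correct and is essentially the explicit version of the paper's own (very terse) reasoning: the paper simply says that it suffices to check the poles of $G$, i.e.\ that pole multiplicity is preserved when passing from $G$ to $f$ via the homeomorphism $\phi$, and your local topological degree argument is precisely the rigorous justification of that transfer. The one point you should add, which the paper does state explicitly, is that $G$ has no poles on the gluing curve $\mathbb{R}$ (this follows from the choice of $\gamma$ and of the curves $\beta_i$ in Proposition~\ref{prop1}); without this observation the reduction ``WLOG $w_0\in\h^+$'' is not quite legitimate, since a priori $w_0$ could lie on the real axis where neither formula for $G$ applies on a full neighborhood. Once that is noted, the rest goes through exactly as you say: $\wp_i$ is a $2$-to-$1$ branched cover near each lattice point, the $h_i$ and $\phi$ are homeomorphisms (indeed $\mathcal{C}^1$-diffeomorphisms) so the local degree is preserved under each composition, and for a meromorphic function the local degree at a pole equals the pole order.
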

To prove this, it suffices to check poles of the map $G$, which are poles of two Weierstra{\ss} elliptic functions lying in certain upper or lower half-planes. Therefore, all poles of $G$ are double poles. We also note that $G$ has no poles on the real axis by the choice of $\gamma$ and the constructions of $\beta_i$ shown in the proof of the Proposition \ref{prop1}.

To sum up, we have the following result.
\begin{theorem}\label{sumup1}
There exists uncountably many meromorphic functions $f$ of order $2$ in the class $\s$ satisfying the following properties:
\begin{itemize}
\item[$(1)$] $f$ has no asymptotic values and at most $7$ critical values;
\item[$(2)$] all poles of $f$ have multiplicity $2$.
\end{itemize}
\end{theorem}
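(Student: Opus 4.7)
The plan is to observe that the construction of Section \ref{qc-gluing} already provides one such function for every admissible pair of lattices, so the theorem reduces to (i) verifying that the four conclusions hold for any single instance of the construction, and (ii) exhibiting an uncountable family of parameter choices that yield pairwise distinct functions.

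For (i), I would fix lattices $\Lambda_i = \{m + n\tau_i : m,n \in \mathbb{Z}\}$ with $\Im \tau_i > 0$, form the quasi-meromorphic map $G$ of \eqref{newdg}, and apply the measurable Riemann mapping theorem to obtain $f$ and $\phi$ with $G = f \circ \phi$. By construction, the Beltrami coefficient of $G$ is supported on the union of two bounded horizontal strips (where $h_1$ and $h_2$ fail to be conformal), so Lemma \ref{logareaes} applies and gives $\phi(z) = z + o(z)$ at infinity. The singular values of $f$ are the $\phi$-images of the singular values of $G$; the two $\wp$-functions contribute no asymptotic values and at most seven critical values in $\hc$ (three finite critical values from each $\wp_i$, plus the shared critical value $\infty$), yielding property (1). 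All poles of $G$ come from the double poles of $\wp_1$ in $\h^+$ or $\wp_2$ in $\h^-$, and none lies on the real axis by the placement of $\gamma$ and the $\beta_i$; since the homeomorphism $\phi$ preserves pole multiplicities, (2) follows. Finally, the asymptotic $\phi(z) \sim z$ transfers the counting estimate $n(r, G) \sim \pi(1/\Im\tau_1 + 1/\Im\tau_2)\, r^2$ to $f$, and Lemma \ref{tlemmma} (applicable since $\infty$ is not an asymptotic value by property (1)) yields $\rho(f) = 2$.

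For (ii), I would fix $\tau_1$ and let $\tau_2 = is$ vary over $s \in (0, \infty)$, producing a family $\{f_s\}$ of functions that all satisfy (1) and (2) and have order $2$. Each $f_s$ carries the asymptotic $n(r, f_s) \sim \pi(1/\Im\tau_1 + 1/s)\, r^2$, whose leading coefficient is a strictly decreasing continuous function of $s$. Hence different values of $s$ produce meromorphic functions with distinct asymptotic pole densities, so they are distinct as meromorphic functions; an uncountable range of $s$ thus delivers the required uncountable family.

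The only delicate point, more bookkeeping than obstacle, is confirming that the Beltrami coefficient of the global $\phi$ arising from the Riemann mapping theorem is indeed supported in a bounded horizontal strip so that Lemma \ref{logareaes} applies. This reduces to the fact that $h_1$ and $h_2$ are $\mathcal{C}^1$-diffeomorphisms that are conformal outside the explicitly described strips $\overline{S_1' \cup S_2'}$ in their respective half-planes, so the complex dilatation pulled back through $G$ is supported exactly on these strips, a region of finite logarithmic area.
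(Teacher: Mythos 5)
Your proposal follows the paper's argument in Section~\ref{qc-gluing} essentially verbatim: gluing two Weierstra{\ss} $\wp$-functions with compatible horizontal period, invoking the measurable Riemann mapping theorem, controlling $\phi$ at infinity via the finite logarithmic area of the two strips and Lemma~\ref{logareaes}, and reading off the order from the pole-counting asymptotics via Lemma~\ref{tlemmma}. Your distinct-asymptotic-density argument for pairwise distinctness makes explicit a point the paper leaves as a terse remark; the one slip is that the singular \emph{values} of $f$ coincide with those of $G$ (it is the singular \emph{points} that are related by $\phi$), but this does not affect the count of at most seven critical values.
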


Note that the uncountability  in the above theorem follows from the fact that we have uncountably many choices of pairs of distinct Weierstra{\ss} elliptic functions in our construction.

\medskip
\noindent{\emph{Equivalence.}} Here we prove that if one chooses different pairs of Weierstra{\ss} elliptic functions, then the obtained Speiser functions are quasiconformally equivalent. Let $\tau$ be such that $\Im(\tau)>0$. Let also $\tau_1, \tau_2$ be distinct such that $\Im(\tau_i)>0$. Moreover, we also assume that $\tau_i\neq\tau$. Let $\wp$ be the Weierstra{\ss} elliptic function with periods $1$ and $\tau$, and $\wp_i$ the Weierstra{\ss} elliptic functions with periods $1$ and $\tau_i$. With the method of construction in this section, one can obtain two Speiser functions $f_i$, where $f_i$ is the result by gluing $\wp$ and $\wp_i$. Here we show that

\begin{theorem}\label{equd}
$f_1$ is quasiconformally equivalent to $f_2$.
\end{theorem}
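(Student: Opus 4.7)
My plan is to establish the equivalence at the level of the underlying quasi-meromorphic maps $G_1, G_2$ from \eqref{newdg} and then transfer via $G_i = f_i\circ\phi_i$ (see \eqref{gfphi}). Given quasiconformal maps $\Phi:\hc\to\hc$ and $\tilde\Psi:\c\to\c$ satisfying $\Phi\circ G_1 = G_2\circ\tilde\Psi$, the equivalence of $f_1$ and $f_2$ follows immediately by taking $\varphi=\Phi$ and $\psi=\phi_2\circ\tilde\Psi\circ\phi_1^{-1}$.

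The first reduction is to make the upper-half surgery data $(\gamma,\beta_1,c_1,h_1)$ common to both constructions. Since this data depends only on $\wp$ and not on $\wp_i$, such a choice is always possible, yielding $G_1|_{\h^+}=G_2|_{\h^+}$. Only the lower halves $G_i|_{\h^-}(z)=\wp_i(h_2^{(i)}(z+c_2^{(i)}))$ need to be reconciled.

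By Observation \ref{obsproof} there exist quasiconformal maps $\Phi_0:\hc\to\hc$ and $\Psi_0:\c\to\c$ with $\Phi_0\circ\wp_1=\wp_2\circ\Psi_0$. I then modify $\Phi_0$ by post-composition with a quasiconformal self-map of $\hc$ that fixes each critical value of $\wp_2$ (so that the relation still lifts through $\wp_2$), arranging that the resulting map $\Phi$ additionally satisfies $\Phi|_\gamma=\mathrm{id}$ and $\Phi(v_j)=v_j$ for every critical value of $\wp$. Such a modification is available because $\gamma$ separates the critical values of $\wp$ from those of $\wp_i$ by Proposition \ref{prop1}, so the adjustment can be carried out independently on each of the two Jordan domains of $\hc\setminus\gamma$. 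Since $\Phi$ fixes every critical value of $\wp$ and is isotopic to the identity in $\hc\setminus\{v_j\}$ by construction, it lifts through the orbifold cover $\wp$ to a quasiconformal $\sigma:\c\to\c$ with $\Phi\circ\wp=\wp\circ\sigma$; because $\Phi|_\gamma=\mathrm{id}$, one takes $\sigma$ to be the identity on $\wp^{-1}(\gamma)$, which in particular preserves $\tilde H_1$. I then define
\[
\tilde\Psi(z) = \begin{cases} h_1^{-1}\bigl(\sigma(h_1(z+c_1))\bigr)-c_1, & z\in\h^+,\\[3pt] (h_2^{(2)})^{-1}\bigl(\Psi(h_2^{(1)}(z+c_2^{(1)}))\bigr)-c_2^{(2)}, & z\in\h^-, \end{cases}
\]
where $\Psi$ is the updated version of $\Psi_0$ satisfying $\Phi\circ\wp_1=\wp_2\circ\Psi$. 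On each open half-plane $\tilde\Psi$ is quasiconformal as a composition of quasiconformal maps, and direct substitution shows $\Phi\circ G_1 = G_2\circ\tilde\Psi$ there.

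The main obstacle is continuity of $\tilde\Psi$ across $\mathbb{R}$. On the upper side, $\sigma|_{\beta_1}=\mathrm{id}$ immediately gives $\tilde\Psi(x)=x$. On the lower side, continuity reduces to $\Psi(h_2^{(1)}(x+c_2^{(1)}))=h_2^{(2)}(x+c_2^{(2)})$ for every $x\in\mathbb{R}$. Applying $\wp_2$ and combining $\Phi\circ\wp_1=\wp_2\circ\Psi$ with $\Phi|_\gamma=\mathrm{id}$ and the surgery matching \eqref{aim} shows that both sides lie in $\wp_2^{-1}(\gamma)$ with identical $\wp_2$-image. Choosing the branch of the lift $\Psi$ that carries the distinguished component $\beta_2^{(1)}$ onto $\beta_2^{(2)}$, normalized so that $\Psi(c_2^{(1)})=c_2^{(2)}$, then forces the two parameterizations to agree by the period-$1$ translation symmetry guaranteed by Proposition \ref{prop1}. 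With continuity across $\mathbb{R}$ secured, $\tilde\Psi$ is a global quasiconformal homeomorphism of $\c$ and the equivalence of $f_1$ and $f_2$ follows.
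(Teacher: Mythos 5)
Your proposal takes a genuinely different route from the paper: the paper first reduces to topological equivalence via \cite[Proposition 2.3(d)]{epstein5} and then builds a homeomorphism combinatorially by labeling the faces of the graphs $\Lambda_i=G_i^{-1}(\Gamma_i)$, whereas you try to construct the equivalence directly by lifting a quasiconformal equivalence of the lower-half Weierstra{\ss} functions (from Observation \ref{obsproof}) through the branched cover $\wp$. That is an interesting idea, but as written there is a concrete gap that the rest of the argument leans on.

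The gap is your first reduction. You claim that the upper-half surgery data $(\gamma,\beta_1,c_1,h_1)$ ``depends only on $\wp$ and not on $\wp_i$,'' and hence can be taken common to both constructions so that $G_1|_{\h^+}=G_2|_{\h^+}$. This is not true. In Proposition \ref{prop1} the curve $\gamma$ must separate $\{v_3,v_4,w_3,w_4\}$ from $\{v_1,v_2,w_1,w_2\}$, where the $w_j$ are critical values of the \emph{lower} Weierstra{\ss} function $\wp_i$; these critical values change with $i$, so $\gamma$, and therefore $\beta_1=\wp^{-1}(\gamma)$-restricted and $h_1$, all depend on $\wp_i$. (In the same vein, your later justification that ``$\gamma$ separates the critical values of $\wp$ from those of $\wp_i$'' misreads Proposition \ref{prop1}: each side of $\gamma$ contains two $v_j$'s and two $w_j$'s, so $\gamma$ does not separate the $v$-values from the $w$-values.) Because your continuity argument across $\mathbb{R}$ from above uses $\sigma|_{\beta_1}=\mathrm{id}$ and the identity $\tilde\Psi(x)=x$ there, which both rest on the common-$\gamma$ assumption, the proof does not close.

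The idea might be salvageable by replacing ``$\Phi$ fixes $\gamma$ pointwise'' with ``$\Phi$ carries $\gamma^{(1)}$ onto $\gamma^{(2)}$'' and then relating $\beta_1^{(1)}$ to $\beta_1^{(2)}$ via the lift $\sigma$, but that requires additional work (in particular one must verify liftability of the modified $\Phi$ through $\wp$ and $\wp_2$ and carefully choose the deck-transformation normalizations). The paper's combinatorial/graph-theoretic approach avoids these issues entirely by never asking for any compatibility between the individual choices of $\gamma$ in the two constructions.
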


\begin{proof}
To prove the theorem, it is sufficient to show that they are topologically equivalent, by \cite[Proposition 2.3 (d)]{epstein5}. In other words, we need to show that there exist two homeomorphisms $\varphi_1,\,\psi_1: \c\to\c$ such that $\varphi_1\circ f_1=f_2\circ\psi_1$. Moreover,  by construction, each $f_i$ can be represented in the form of \eqref{gfphi}. Assume that $f_i=G_i\circ\phi_i$ with $G_i$ quasimeromorphic and $\phi_i$ quasiconformal. Therefore, to prove the theorem it suffices to prove that $G_1$ and $G_2$ are topologically equivalent. This is in some sense similar to the proof of Observation given at the beginning of this section. The essential ingredient of the proof is the constructions of certain graphs in the plane which play the role of the graph obtained by connecting lattice points for Weierstra{\ss} elliptic functions.

Now let $v_1,\dots, v_6$ be the finite critical values of $G_1$, and $w_1, \dots, w_6$ the finite critical values of $G_2$. Both of $G_1$ and $G_2$ also have $\infty$ as a critical value. Choose two closed Jordan curves $\Gamma_1$ and $\Gamma_2$ on $\hc$ such that $\Gamma_i$ passes through $\infty$ and critical values of $G_i$ in the same order. Now each $\Gamma_i$ can be viewed a graph in the obvious way, whose vertices are critical values of $G_i$ and whose edges are the parts of $\Gamma_i$ connecting vertices. Each $\Gamma_i$ decomposes the sphere into two Jordan domains $A_i$ and $B_i$. Put $\Lambda_i=G_{i}^{-1}(\Gamma_i)$. Then each $\Lambda_i$ is a graph embedded in the plane whose vertices are preimages of vertices of $\Gamma_i$ (i.e., preimages of critical values of $G_i$) and whose edges are preimages of edges of $\Gamma_i$. Moreover, each face of $\Lambda_i$ is mapped homeomorphically to either $A_i$ or $B_i$ by $G_i$. 

Let $\varphi: \Gamma_1\to\Gamma_2$ be a homeomorphism fixing $\infty$ and sending $v_i$ to $w_i$ respectively. We then would like to define a map between graphs $\Lambda_i$ using $\varphi_i$ and $G_i$. To achieve this, we need to give a labeling on faces of $\Lambda_i$ which helps us to locate points mapped to each other by the expected graph map on $\Lambda_i$. The construction of $G_i$ implies that the origin is a regular point and lies either in a face of $G_i$ or on the edge of $\Lambda_i$. In the former case, we label $(0,0)$ for the unique face containing the origin for each $G_i$ and assume that this face is mapped to $A_i$. In the later case, we label $(0,0)$ the unique face of $\Lambda_i$ which contains the origin on the boundary and is mapped to $A_i$. By construction of the $G_i$, we know that $\Lambda_i$ induces a tiling of the plane, such that each face is a polygon of $7$ vertices on the boundary. However, although each face has $7$ vertices, only $4$ of them are critical points. These critical points meet exactly four faces. Every other vertex meets two faces. Hence we have a tiling of the plane of quadrilaterals, by ignoring the vertices which are not critical points. (One can think of these quadrilaterals as deformations of parallelograms generated by half-periods of Weierstra{\ss} elliptic functions.) Now the face $(0,0)$ labeled above gives a natural labeling for all the rest of faces by moving along the horizontal and $\tau$ and $\tau_i$ directions: there is only one face which shares a common boundary with the face $(0,0)$ when moving along the positive (respectively negative) real direction and we denote this face by $(1,0)$ (resp. $(-1,0)$); there is also only one face which shares a common boundary with the face $(0,0)$ when moving along $\tau$-direction (resp. $\tau_i$-direction) and we denote this face by $(0,1)$ (resp. $(0,-1)$). We then continue this procedure and in this way every face has a unique labeling. Now we pick up a point $z\in\Lambda_1$. Then $z$ is uniquely determined by the labels of faces which have $z$ on their boundaries. So, $\varphi\circ G_1(z)$ is a point on $\Gamma_2$, which has infinitely many preimages on $\Lambda_2$. However, with the above labeling induced by the point $z$, there is a unique point $w$ in $\Lambda_2$, which has the same corresponding labels for faces adjacent to $w$, and which is mapped to $\varphi\circ G_1(z)$ by $G_2$. So we have just defined a homeomorphism
\begin{equation}\label{commu}
	\psi: \Lambda_1 \to \Lambda_2
\end{equation}
which sends $z$ to $w$.

 Now we extend $\varphi$ to $A_1$ and $B_1$ homeomorphically and thus $\psi$ can be extended homeomorphically to the faces of $\Lambda_1$ due to the fact that every face can be mapped homeomorphically to one of $A_1$ and $B_1$. In this way, the above \eqref{commu} is extended to the whole plane. We thus have
$$G_2\circ\psi=\varphi\circ G_1,$$
as required.
\end{proof}

\medskip
For the estimate of the Hausdorff dimension of escaping set for $f$, we will also need to have an understanding of local behaviours of $f$ near its poles. First let $z_0$ be a pole of $G$. Then by \eqref{newdg}, $h_i(z_0+c_i)$ is a pole of $\wp_i$ for some $i\in\{1,2\}$. Without loss of generality, we assume that $i=1$. Put $\zeta=h_{1}(z+c_1)$ and $\zeta_0=h_{1}(z_0+c_1)$. So $\zeta_0$ is a pole of $\wp_1$. So there exists a constant $C'$ such that
$$G(z)\sim\left(\frac{C'}{\zeta-\zeta_0}\right)^2\,\quad\text{as}\quad z\to z_0.$$
Note that, by our construction $h_1$ is $\mathcal{C}^1$-diffeomorphic and periodic. So one sees that $\zeta-\zeta_0\sim C''(z-z_0)$ for some universal constant $C''$ as $z\to z_0$. With this we deduce that
$$G(z)\sim\left(\frac{C}{z-z_0}\right)^2\,\quad\text{as}\quad z\to z_0,$$
where $C$ is a constant depending on $C'$ and $C''$.

Put $w=\phi(z)$ and $w_0=\phi(z_0)$. Since $w_0$ is a double pole of $f$, we may assume that
$$f(w)\sim \left(\frac{b(w)}{w-w_0}\right)^2\,\quad\text{as}\quad w\to w_0.$$
Here $b(w)$ is a holomorphic function near $w_0$ and $b(w_0)\neq 0$. The above two estimates, together with the relation \eqref{gfphi}, imply that
$$b(w)\sim C\,\frac{w-w_0}{z-z_0}\,\quad\text{as}\quad z\to z_0.$$
Recall that $\phi$ is conformal at $\infty$, so one also has $w\to w_0$ as $z\to z_0$. Now with \eqref{ascon} we see that
$$b(w_0)\sim C\,\lim_{z\to z_0}\frac{w-w_0}{z-z_0}=C.$$
So near the pole $w_0$, one has
$$f(w)=\left(\frac{B}{w-w_0}\right)^2\,\quad\text{as}\quad w\to w_0,$$
where $B$ is certain constant depending on $C$.

\subsection{Speiser functions with orders in $(0,2)$}\label{sec3.2}

This part is devoted to constructing Speiser meromorphic functions with orders in $(0,2)$. Basically we follow the construction above and also have to make substantial changes on certain parts. The main idea here, compared with the surgery in the previous section, is then to glue instead two functions of the form $\wp_i \circ h_i (z^{\eta})$ for $i=1,2$, along the real axis with $h_i$ quasiconformal and two carefully chosen Weierstra{\ss} elliptic functions $\wp_i$ for $\eta\in (0,1)$. For a sketch of the idea for the construction here, see Figure \ref{idea2}.

\begin{figure}[h]
	\centering
	\includegraphics[width=15cm]{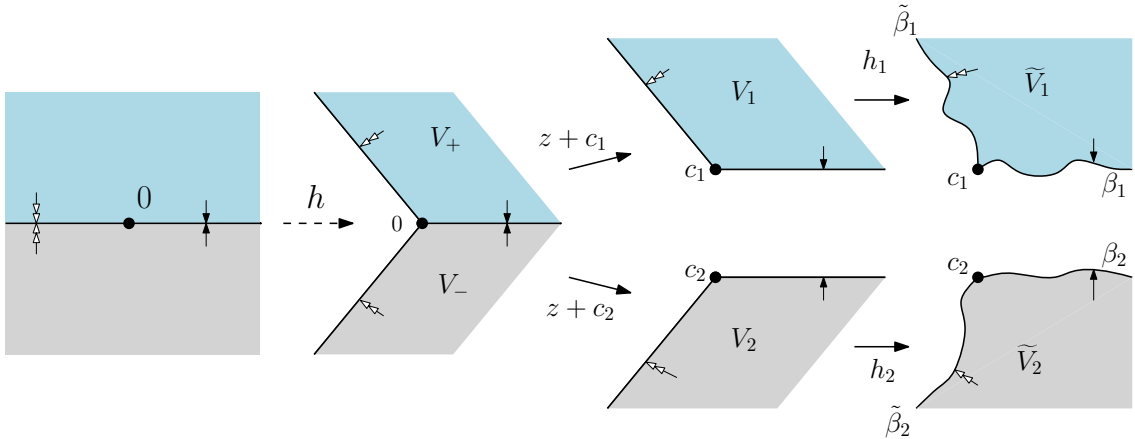}
	\caption{A sketch of the idea of construction for Speiser functions with orders in $(0,2)$. The map $h$ is essentially used to change the orders while quasiconformal mappings $h_1$ and $h_2$ are used to remove the discontinuity along the real axis.}
	\label{idea2}
\end{figure}

To be more specific, we will prove the following result.

\begin{theorem}\label{leq2}
	For any given $\rho\in(0,2)$, there exists a Speiser meromorphic function $f$ of order $\rho$ with at most $7$ critical values and no asymptotic values. Moreover, all poles of $f$ have multiplicity $2$.
\end{theorem}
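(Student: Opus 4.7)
The approach is to repeat the quasiconformal gluing surgery of Section~\ref{qc-gluing} after pre-composing with the power map $z \mapsto z^{\eta}$, where $\eta = \rho/2 \in (0,1)$. Since this pre-composition thins out the density of poles from $n(r) \sim r^2$ to $n(r) \sim r^{2\eta} = r^{\rho}$, the resulting meromorphic function will have the prescribed order $\rho$.

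Concretely, the plan is to take two Weierstra\ss\ elliptic functions $\wp_1, \wp_2$ of common real period $1$, and to define $z^{\eta}$ on $\h^{+}$ and $\h^{-}$ using the branches $\arg z \in (0,\pi)$ and $\arg z \in (-\pi,0)$ respectively, so that on each half-plane $z^{\eta}$ is a conformal bijection onto a sector of opening $\pi\eta$. Applying Proposition~\ref{prop1} produces the analytic curves $\beta_i$ with $\wp_i(\beta_i) = \gamma$; then, essentially as in Section~\ref{qc-gluing}, I would construct quasiconformal maps $h_i$ (adapted to the sector geometry, and using a correction map to remove the $\wp_1$-vs-$\wp_2$ mismatch along the positive real axis) so that
\[
G(z) = \begin{cases} \wp_1 \circ h_1(z^{\eta} + c_1), & z \in \h^{+}, \\ \wp_2 \circ h_2(z^{\eta} + c_2), & z \in \h^{-}, \end{cases}
\]
is continuous across the positive real axis. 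The measurable Riemann mapping theorem will then yield $G = f \circ \phi$ with $f$ meromorphic and $\phi$ quasiconformal.

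The main new obstacle, absent in Section~\ref{qc-gluing}, is the branch cut of $z^{\eta}$ along the negative real axis: there the upper and lower boundary values of $G$ are $\wp_1 \circ h_1(|x|^{\eta} e^{i\pi\eta} + c_1)$ and $\wp_2 \circ h_2(|x|^{\eta} e^{-i\pi\eta} + c_2)$, which in general do not agree. The plan is to remove this discontinuity by a second quasiconformal surgery in a thin strip around the negative real axis, exactly parallel to the surgery in Section~\ref{qc-gluing}. The technical heart of the argument is then to verify that the dilatation support of the resulting $\phi$ has finite logarithmic area outside the unit disk. The key estimate is that the pull-back via $z \mapsto z^{\eta}$ of a bounded neighborhood of $\beta_i$ lies in a region of the form $\{0 < y < C|x|^{1-\eta}\}$, whose contribution to $\iint (x^2+y^2)^{-1}\,dx\,dy$ is bounded by $\int_{1}^{\infty} C x^{-1-\eta}\,dx < \infty$. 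Once this is established, Lemma~\ref{logareaes} gives $\phi(z) = z + o(z)$, and hence $n(r,f) \sim n(r,G) \sim C r^{\rho}$; Lemma~\ref{tlemmma} then yields $\rho(f) = \rho$. The remaining properties (at most $7$ critical values, no asymptotic values, and all poles of multiplicity $2$) are inherited from $\wp_1, \wp_2$ via the local analysis at the end of Section~\ref{qc-gluing}, just as in Theorem~\ref{sumup1}.
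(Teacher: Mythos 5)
Your overall strategy (pre-compose with $z \mapsto z^{\eta}$ with $\eta = \rho/2$, then glue $\wp_1$ and $\wp_2$ by a quasiconformal surgery along the seams, and invoke finite logarithmic area plus Teichm\"uller--Wittich--Belinskii to control $\phi$) is indeed the approach the paper takes in Section~\ref{sec3.2}. However, there is a genuine gap in how you set up the gluing along the branch cut, and it is not a minor technicality: it is the point the whole construction hinges on.

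You write that one should ``take two Weierstra\ss\ elliptic functions $\wp_1, \wp_2$ of common real period $1$'' and then apply Proposition~\ref{prop1}. But the power map sends $\h^{+}$ and $\h^{-}$ onto the sectors $\{0 < \arg w < \pi\eta\}$ and $\{-\pi\eta < \arg w < 0\}$ respectively, so the two seams in the $w$-plane are the positive real ray and the rays at angles $\pm\pi\eta$. Your surgery near $\arg w = 0$ is handled by a strip that is periodic of period $1$, exactly as in Section~\ref{qc-gluing}, and your pull-back estimate $\{0 < y < C|x|^{1-\eta}\}$ is correct there. But near the ray at angle $\pi\eta$ you need a curve $\tilde\beta_1$ with $\wp_1(\tilde\beta_1) = \tilde\gamma$ that stays within bounded distance of that ray, so that the correction region in the $w$-plane is a bounded-width strip along $\arg w = \pi\eta$. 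Such a curve is periodic in the direction of the second period $\tau_1$ of $\wp_1$, and hence stays near the ray $\arg w = \arg\tau_1$. Unless you impose $\arg\tau_1 = \pi\eta$ (i.e.\ $\tan(\pi\eta) = \Im\tau_1/\re\tau_1$) --- and symmetrically $\tau_2 = \overline{\tau_1}$ so that $\arg\tau_2 = -\pi\eta$ --- the curve $\tilde\beta_1$ drifts away from the sector boundary of $V_+$ at a \emph{linear} rate, and the dilatation support in the $w$-plane grows like a wedge rather than a strip. Its logarithmic area is then $\int^{\infty} r^{-1}\,dr = \infty$, so Lemma~\ref{logareaes} does not apply and you lose control of $\phi$ at infinity, and with it the asymptotics $n(r,f) \sim n(r,G)$ and the order computation. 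Proposition~\ref{prop1} alone also does not produce the $\tau_i$-periodic curves $\tilde\beta_i$ you need for this second seam; one needs the strengthened Proposition~\ref{prop2}, which simultaneously produces a $1$-periodic curve $\beta_i$ and a $\tau_i$-periodic curve $\tilde\beta_i$ emanating from a common point $c_i$. Finally, where the two surgery regions meet near $c_i$ the two partial quasiconformal maps need to be patched on a compact quadrilateral (the $\breve h_i: P_i \to \tilde P_i$ step); that is a small matter, but it should be recorded.

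In short: the idea is right, but you must \emph{choose} the lattices so that the second period direction of each $\wp_i$ coincides with the corresponding sector boundary $\arg w = \pm\pi\eta$. Without that geometric constraint the negative-real-axis surgery cannot be confined to a set of finite logarithmic area and the proof does not close.
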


Let $\rho\in(0,2)$ be given. Put $\alpha=\rho\pi\in (0,2\pi)$. Let $\wp$ be a Weierstra{\ss} elliptic function with two periods $1$ and $\tau$, where $\tau$ is non-real such that $\Im \tau>0$ and
$$\tan\frac{\alpha}{2}=\frac{\Im \tau}{\re\tau}.$$
Put
\begin{align}
V_{+}&=\left\{\,z=re^{i\theta}:\, r>0,\, 0<\theta<\frac{\alpha}{2} \,\right\},\\
V_{-}&=\left\{\,z=re^{i\theta}:\, r>0,\, -\frac{\alpha}{2}<\theta<0 \,\right\}
\end{align}
and
$$V=\left\{z=re^{i\theta}:\, r>0,\, |\theta|<\frac{\alpha}{2}\, \right\}.$$
Then the function
\begin{equation}\label{maph}
h: \,\c\setminus\mathbb{R}^{-}\to V,\,\,~\,z\mapsto z^{\frac{\alpha}{2\pi}}
\end{equation}
is a conformal map, where we use the principal branch of the logarithm. At the discontinuity on the negative real line we will consider two slightly different versions of the principal branch as follows; namely, for $x < 0$ we define  
\begin{align}
h_+(x) &= x^{\frac{\alpha}{2\pi}}\quad  \text{ where~ $\arg(x) = \pi$},\\
h_-(x) &= x^{\frac{\alpha}{2\pi}}\quad  \text{ where~ $\arg(x) = -\pi$}.
\end{align}
For all other $z \neq 0$ we let $h_+$ and $h_-$ be defined as the usual principal branch. 

We now consider two $\wp$-functions: $\wp_1$ with periods $1$ and $\tau_1 = \tau$  and $\wp_2$ with periods $1$ and $\tau_2=\overline{\tau}$. Then we want to glue $\wp_1\circ h$ with $\wp_2\circ h$ along the positive real axis using the methods described in Section \ref{qc-gluing}. Again, the arising discontinuity along the real axis presents certain obstructions. To overcome this, we use similar idea as in the previous section. We use notations as given in \eqref{symbol1} and \eqref{symbol2} for the critical values of $\wp_i$ here.

Our starting point is a result analogous to Proposition \ref{prop1}. 

\begin{proposition}\label{prop2}
There exist two analytic closed curves $\gamma$ and $\tilde{\gamma}$ and two points $c_i$ in the plane with the following properties:
\begin{itemize}
\item[$(1)$] $\gamma$ separates the critical values $v_3,\,v_4,\, w_3,\,w_4$ from all other critical values of $\wp_i$ for $i=1,2$. For each $i$ there exists an unbounded analytic curve $\beta_i$ starting from $c_i$ such that $\wp_i(\beta_i)=\gamma$. Moreover, $\beta_i$ is periodic with period $1$ (i.e., $z\in\beta_i$ implies $z+1\in\beta_i$).
\item[$(2)$] $\tilde{\gamma}$ separates the critical values $v_2,\,v_3,\, w_2,\,w_3$ from all other critical values of $\wp_i$ for $i=1,2$. For each $i$ there exists an unbounded analytic curve $\tilde{\beta}_i$ starting from $c_i$ such that $\wp_i(\tilde{\beta}_i)=\tilde{\gamma}$. Moreover, $\tilde{\beta}_i$ is periodic with period $\tau_i$.
\item[$(3)$] $\gamma$ intersects with $\tilde{\gamma}$ at exactly two points.
\end{itemize}
\end{proposition}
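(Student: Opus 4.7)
\medskip
\noindent\textbf{Proof plan.} The plan is to apply the method of Proposition~\ref{prop1} twice, once for $\gamma$ and once for $\tilde{\gamma}$, while coordinating the two constructions so that the intersection condition $(3)$ and the common starting point $c_i$ are achieved. As in that proof, I would begin by fixing a closed Jordan curve $\Gamma$ on $\hc$ passing through all (at most seven) critical values of $\wp_1$ and $\wp_2$ in a compatible cyclic order, so that for each $i\in\{1,2\}$ the preimage $\wp_i^{-1}(\Gamma)$ tiles the plane by quadrilateral faces, each mapped conformally by $\wp_i$ onto one of the two Jordan domains of $\hc\setminus\Gamma$.

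For part $(1)$, the curve $\gamma$ is constructed essentially verbatim as in Proposition~\ref{prop1}: a small closed analytic curve encircling $\{v_3,v_4,w_3,w_4\}$ and meeting $\Gamma$ transversely at two points lying on the two edges of $\Gamma$ that separate these four values from the remaining ones. For each $i$, the relevant preimage half-periods $\tau_i/2$ and $(1+\tau_i)/2$ of $\wp_i$ lie in the ``upper row'' of the half-period diagram, which is invariant under horizontal translation by~$1$; this is exactly what lets us concatenate local lift arcs across adjacent quadrilaterals and close up under translation by $1$, producing the unbounded period-$1$ analytic curve $\beta_i$ with $\wp_i(\beta_i)=\gamma$.

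For part $(2)$, I repeat the procedure with a small closed analytic curve $\tilde{\gamma}$ around $\{v_2,v_3,w_2,w_3\}$, separating them from $\{v_4,w_4,\infty\}$. The relevant preimage half-periods of $\wp_i$ are now $1/2$ and $(1+\tau_i)/2$, which form the ``right column'' of the half-period diagram and are invariant under translation by~$\tau_i$. The identical lift-and-concatenate argument then produces an unbounded analytic curve $\tilde{\beta}_i$ with $\wp_i(\tilde{\beta}_i)=\tilde{\gamma}$ and period $\tau_i$, as required. For part $(3)$ and the common basepoint, observe that the enclosed critical sets of $\gamma$ and $\tilde{\gamma}$ overlap precisely in $\{v_3,w_3\}$; by realising the two curves as small loops whose boundaries meet transversely (which is a standard transversality adjustment), I can arrange $\gamma\cap\tilde{\gamma}$ to consist of exactly two points. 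Picking one such point $p$ and letting $c_i$ be the preimage of $p$ under $\wp_i$ chosen compatibly with the inverse branches used to lift $\gamma$ and $\tilde{\gamma}$ above, $c_i$ lies simultaneously on both $\beta_i$ and $\tilde{\beta}_i$, and the periodicity of each lift permits us to regard it as starting from $c_i$.

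The main technical point, as in Proposition~\ref{prop1}, is the combinatorial check that the local lift arcs assemble into a single unbounded curve invariant under the claimed translation period; via the conformal identification of each quadrilateral face with a component of $\hc\setminus\Gamma$, this reduces to the observation that the two separation patterns (``top versus bottom'' for $\gamma$, ``right versus left'' for $\tilde{\gamma}$) are aligned with the two period directions of the lattice $\Lambda_i$. The secondary delicate point, ensuring compatible choices of inverse branches of $\wp_1$ and $\wp_2$ so that $c_1$ and $c_2$ can be defined consistently, is handled exactly as in the construction of $\beta_2$ in Section~\ref{qc-gluing}, by referring back to how the original graph $\wp_i^{-1}(\Gamma)$ is organised.
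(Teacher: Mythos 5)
Your proposal follows essentially the same route as the paper's proof: both apply the argument of Proposition~\ref{prop1} twice, once for $\gamma$ yielding $1$-periodic lifts along the horizontal direction and once for $\tilde{\gamma}$ yielding $\tau_i$-periodic lifts along the $\tau_i$-direction, then arrange $\gamma\cap\tilde\gamma$ to consist of two points and take the common intersection point of the two lifted curves as $c_i$. The only small difference is cosmetic: the paper pins down $c_i$ by noting that the full periodic lifts $\widehat{\beta}_i$ and $\widehat{\tilde{\beta}}_i$ meet in exactly one point, whereas you fix a compatible inverse branch of one of the two intersection points of $\gamma$ and $\tilde\gamma$; both choices single out the same $c_i$.
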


\begin{proof}
As in the proof of Proposition \ref{prop1}, we choose a Jordan curve $\Gamma$ passing through all critical values of $\wp_i$ with the same conditions there. Then by choosing an analytic curve $\gamma$  separating $v_3, v_4$ and $w_3, w_4$ from all other critical values of $\wp_i$ and intersecting with $\Gamma$ exactly twice, we can obtain two periodic analytic curves $\widehat{\beta}_i$ along the horizontal direction and such that $\wp_i\left(\widehat{\beta}_i\right)=\gamma$. 

Now we choose another analytic curve $\tilde{\gamma}$ separating $v_2,\,v_3,\, w_2,\,w_3$ from all other critical values of $\wp_i$ for $i=1,2$ and intersecting with $\Gamma$ exactly twice. Moreover, we assume that $\tilde{\gamma}\cap\gamma$ also consists of two points. This can be done since both of them are analytic curves. By using the construction in Proposition \ref{prop1}, we can obtain two periodic analytic curves $\widehat{\tilde{\beta}}_i$ which are $\tau_i-$periodic; in other words, $z\in\widehat{\tilde{\beta}}_i$ implies that $z+\tau_i\in \widehat{\tilde{\beta}}_i$.

Since $\tilde{\gamma}$ intersects with $\gamma$ consists of two points, we see that $\widehat{\beta}_i\cap\widehat{\tilde{\beta}}_i$ consists of exactly one point, denoted by $c_i$. Now we define $\beta_i$ to be the part of $\widehat{\beta}_i$ starting from $c_i$ and tending to $\infty$ along the direction of the positive real axis. Similarly we let $\tilde{\beta}_i$ be the part of $\widehat{\tilde{\beta}}_i$ along the direction of $\tau_i$ starting from $c_i$. See Figure \ref{idea2}. This finishes our construction of desired curves.
\end{proof}

Put
\begin{equation}\label{desh}
V_{1}=V_{+}+c_1\quad\text{and}\quad V_{2}=V_{-}+c_2.
\end{equation}
We also define $\widetilde{V}_i$ to be the domain bounding by the curves $\beta_i$ and $\tilde{\beta}_i$. Then our main focus will be the construction of the quasiconformal mappings
$$h_{i}: V_{i}\to\widetilde{V}_i$$
such that
\begin{equation}\label{finapur}
\wp_{1}\left(h_{1}(h_+(x)+c_1) \right)=\wp_{2}\left(h_{2}(h_-(x)+c_2) \right)\quad\text{whenever}\quad x\in\mathbb{R}.
\end{equation}
As one might notice, the $h_i$'s here play the same role as $h_i$'s in the previous section except that there we are requiring $h_i$ to be a map defined in sectors instead of certain half-planes.

\medskip
Now we start from the construction of $h_i$ in part of $V_i$. First we note that $\wp_1$ and $\wp_2$ satisfy conditions that are required for the gluing in the previous section. So we can glue $\wp_1$ and $\wp_2$ along the \emph{whole} real axis using exactly the same method there. More precisely, let $\widehat{\beta}_i$ be as constructed in the proof of Proposition \ref{prop2} (recall that $\beta_i$ are part of $\widehat{\beta}_i$). Let $H_1=\{z: \Im(z)>\Im(c_1)\}$ and $H_2=\{z: \Im(z)<\Im(c_2)\}$, and also $\tilde{H}_1$ the domain bounded below by $\widehat{\beta}_1$ and $\tilde{H}_2$ be the domain bounded above by $\widehat{\beta}_2$. Then we can obtain a quasiconformal mapping
$$\hat{h}_1:\, H_1\to\tilde{H}_1$$
which is quasiconformal in the strip
$$\left\{z: \Im(c_1)<\Im(z)< b_1 \right\}$$
for some $b_1$ and is identity elsewhere in $H_1$, and another quasiconformal mapping
$$\hat{h}_2:\, H_2\to\tilde{H}_2$$
which is quasiconformal in the strip
$$\left\{z: b_2<\Im(z)<\Im(c_2)\right\}$$
for some $b_2$ and is identity elsewhere in $H_2$. Moreover, we have
\begin{equation}
\wp_1\left( \hat{h}_1\left(x+c_1\right)\right)=\wp_2\left( \hat{h}_2\left(x+c_2\right)\right)\quad\text{for}\quad x\in\mathbb{R}.
\end{equation}
Put
\[
S_1 = \left\{ z\in V_{1}: \Im(c_1) < \Im(z) < b_1\, \right\}
\]
and
\[
S_2 = \left\{ z\in V_{2}: b_2<\Im(z)<\Im(c_2)\, \right\}.
\]
We define $h_1$ and $h_2$ on $S_1$ and $S_2$ respectively as  the restriction of $\hat{h}_1$ and $\hat{h}_2$ on $S_1$ and $S_2$. With this we see immediately that \eqref{finapur} holds for $x>0$. So the remaining work is to make sure that \eqref{finapur} is also true for $x<0$. This will be done using similar arguments as before. We will do construction again in domains $V_1$ and $V_2$. Instead of doing quasiconformal surgery along horizontal direction, this time we work along $\tau_1$ and respectively $\tau_2$ directions.

\medskip
Recall the analytic curves $\widehat{\tilde{\beta}}_i$ constructed in the proof of Proposition \ref{prop2} (and $\tilde{\beta}_i$ is a "half" of $\widehat{\tilde{\beta}}_i$). We put
$$l_+=\left\{ z +c_1: \arg(z) = \arg(\tau_1) \,\right\}.$$
\begin{figure}[h]
	\centering
	\includegraphics[width=15cm]{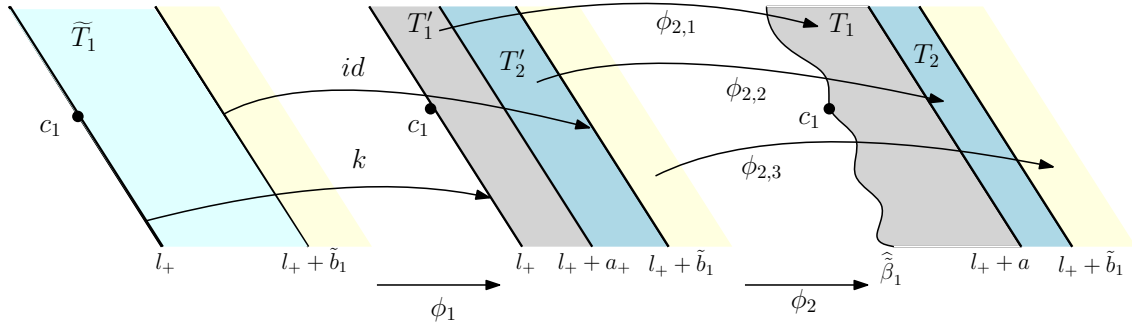}
	\caption{The map $\tilde{h}_1: R_1\to \tilde{R}_1$ is the composition of two quasiconformal mappings $\phi_2$ and $\phi_1$. Roughly speaking, one can view $\phi_2$ as a "straightening" map while $\phi_1$ is a "correction" map which fixes the difference between $\phi_2$ and $\tilde{h}_2$.}
	\label{xieperiod}
\end{figure}
Now we choose $a$ such that the line $l_{+}+a$ lies on the right of $\widehat{\tilde{\beta}}_1$. It follows, using the same argument as in the proof of Lemma \ref{pecon}, that there exists a real number $a_{+}$ such that the strip $T'_{1}$ between $l_{+}$ and $l_{+}+a_{+}$ is mapped conformally  onto the strip $T_{1}$ between $\widehat{\tilde{\beta}}_1$ and $l_{+}+a$. This map is denoted by $\phi_{2,1}$. Now we can choose a number $\tilde{b}_1$ such that the line $l_{+}+\tilde{b}_1$ is parallel to $l_{+}$ and lies to the right of both $l_{+}+a$ and $l_{+}+a_{+}$. Denote by $T'_2$ the strip between $l_{+}+a_{+}$ and $l_{+}+\tilde{b}_1$ and by $T_2$ the strip between $l_{+}+a$ and $l_{+}+\tilde{b}_1$. Then we construct a quasiconformal map $\phi_{2,2}$ between $T'_2$ and $T_2$ (as we did for the map $\phi_{2,2}$ in Section \ref{qc-gluing}) by interpolating between the extension of the above map $\phi_{2,1}$ to $l_{+}+a_+$ and the identity map on $l_{+}+\tilde{b}_1$. We omit details for this as it is the same as before. We also consider the identity map $\phi_{2,3}$ on the domain which is to the right of $l_{+}+\tilde{b}_1$. Let us denote by $R_{1}$ the half-plane to the right of the line $l_{+}$ and $\tilde{R}_1$ the curved half-plane lying to the right of $\widehat{\tilde{\beta}}_1$. In this way, we have obtained a quasiconformal map 
$$\phi_{2}: R_1 \to \tilde{R}_1$$
by defining
\[
\phi_2(z)=
\left\{ \begin{array}{rl}
\phi_{2,1}(z), &\quad\text{if} ~z \in T'_1, \\
\phi_{2,2}(z), &\quad\text{if} ~z \in T'_2, \\
\phi_{2,3}(z), &\quad\text{elsewhere.}
\end{array} \right.
\]

With
$$l_-=\left\{ z+c_2 : \arg(z) = \arg(\tau_2) \,\right\},$$
and $R_2$ the half-plane to the right of $l_{-}$ and $\tilde{R}_2$ the curved half-plane to the right of $\widehat{\tilde{\beta}}_2$, we can use the above construction to obtain a quasiconformal map between $R_2$ and $\tilde{R}_2$, which is quasiconformal only in a strip but is identity elsewhere. This map is denoted by $\tilde{h}_2$ (which plays the role of $h_1$ in the last section). Now if we choose two points $x_1$ and $x_2$ respectively on $l_{+}-c_1$ and $l_{-}-c_2$ with the same modulus (i.e., their arguments are $\arg(\tau_1)$ and $\arg(\tau_2)$ respectively), it is still possible that $\wp_{1}(\phi_2(x_1+c_1))\neq \wp_{2}(\tilde{h}_2(x_2+c_2))$. This means that if we choose a point $x\in\mathbb{R}^{-}$, then
$$\wp_1\circ\phi_2\left(h_+(x)+c_1\right)\neq \wp_{2}\circ\tilde{h}_2(h_-(x)+c_2).$$
This, in turn, means that there are discontinuities along the negative real axis. So, as in the previous section, we need to change $\phi_2$ a little further so as to solve this. More precisely, we first define a function 
\begin{align}
k: l_+&\rightarrow l_+\\
z&\mapsto \left(\phi_{2}^{-1} \circ\wp_{1}^{-1} \circ \wp_2 \circ  \tilde{h}_2 \right) \left(\overline{z-c_1}+c_2\right).
\end{align}
This function is essentially obtained in the same manner as the function $k$ in the last section. We omit details but conclude that we interpolate between the function $k$ on $l_{+}$ and the identity map on $l_{+}+\tilde{b}_1$ to obtain a quasiconformal map $\phi_{1,1}$ on the strip, denoted by $\widetilde{T}_1$, between $l_{+}$ and $l_{+}+\tilde{b}_1$. Then a quasiconformal map $\phi_1: R_1\to R_1$ is defined by putting $\phi_1=\phi_
{1,1}$ in the strip between $\widetilde{T}_1$ and $\phi_1(z)=z$ elsewhere. Now put
$$\tilde{h}_1=\phi_2\circ\phi_1.$$
This is a map sending $R_1$ to $\tilde{R}_1$ quasiconformally and has the property that
$$\wp_1\left(\tilde{h}_1(h_+(x)+c_1)\right)=\wp_2\left(\tilde{h}_2(h_-(x)+c_2)\right)\quad\text{for}\quad x\in\,\mathbb{R}^-.$$

\medskip
We have defined two quasiconformal mappings $\hat{h}_i$ and $\tilde{h}_i$ on each of $V_i$. Without loss of generality, we take $V_1$ as an example and consider the restrictions of $\hat{h}_1$ and $\tilde{h}_1$ in $V_1$. The case for $V_2$ goes in the same way. Then the map $\hat{h}_1$ is quasiconformal in the horizontal half-strip $S_1$ and identity on $V_1 \setminus S_1$, while $\tilde{h}_1$ is quasiconformal in the strip $\widetilde{T}_1 \cap V_1$ and is the identity on $V_1 \setminus \widetilde{T}_1$. So they share a sector domain $V'_{1}$ on which both of $\hat{h}_i$ and $\tilde{h}_i$ are the identity map. See Figure \ref{paralcon} for an illustration.
\begin{figure}[h]
	\centering
	\includegraphics[width=12cm]{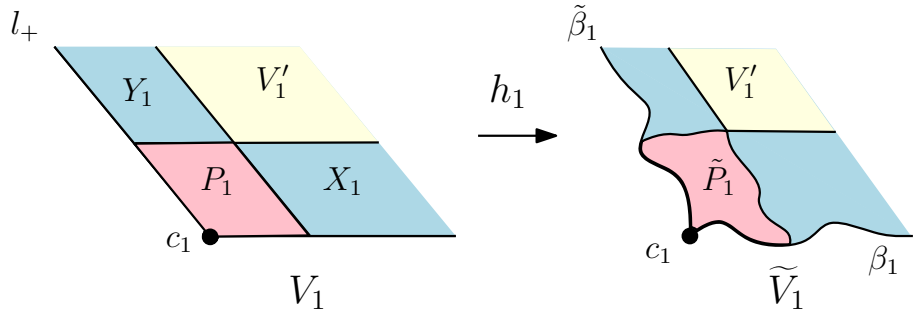}
	\caption{The quasiconformal homeomorphism $h_1: V_1\to \tilde{V}_1$ is constructed piecewise: it is $\hat{h}_1$ on $X_1$, $\tilde{h}_1$ on $Y_1$, $\breve{h}_{1}$ on $P_1$ and identity elsewhere.}
	\label{paralcon}
\end{figure}
One can also notice that there is a parallelogram with $c_1$ as a vertex, on which $\hat{h}_1$ and $\tilde{h}_1$ may not coincide. We will need to redefine a map on this parallelogram. Denote by $P_1$ the parallelogram as shown in Figure \ref{paralcon}. The edges of $P_1$ are denoted by $\ell_i$ for $i=1,\dots,4$. See Figure \ref{fig42}. We also put
$$\tilde{\ell}_i=\hat{h}_1(\ell_i)\quad\text{for}\quad i=1,2,$$
and
$$\tilde{\ell}_i=\tilde{h}_1(\ell_i)\quad\text{for}\quad i=3,4.$$
The quadrilateral enclosed by $\tilde{\ell}_i$ is denoted by $\tilde{P}_1$. Therefore, we have defined a boundary map between $P_1$ and $\tilde{P}_1$ by using $\hat{h}_1$ on $\ell_1\cup\ell_2$ and $\tilde{h}_1$ on $\ell_3\cup\ell_4$. These are all $\mathcal{C}^1$-diffeomorphisms, as these are extensions of quasiconformal mappings obtained by linear interpolations between $\mathcal{C}^1$-diffeomorphisms. So, by \cite[Lemma 2.24]{branner3}, we can extend the boundary map to $P_1$ and obtain a quasiconformal map
$$\breve{h}_{1}: P_1\to\tilde{P_1}.$$

\begin{figure}[h] 
	\centering
	\includegraphics[width=10cm]{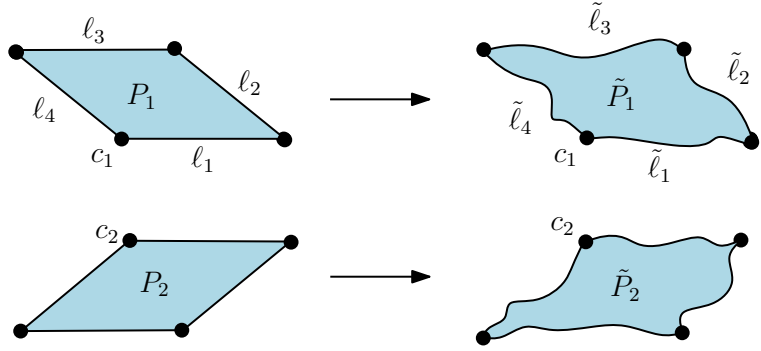}
	\caption{Two quasiconformal homeomorphisms from $\breve{h}_{i}: P_i\to\tilde{P}_i$ are constructed by extending boundary maps $\hat{h}_i$ and $\tilde{h}_i$.}
	\label{fig42}
\end{figure}
Analogously, we can do this for the sector $V_2$ and get a quasiconformal map
$$\breve{h}_{2}: P_2\to\tilde{P}_2,$$
where $P_2$ is defined similarly as $P_1$ and so is $\tilde{P}_2$ (see Figure \ref{fig42}). Put 
$$X_1=\left(V_1\cap S_1\right)\setminus \widetilde{T}_1$$
and
$$Y_1=\left(V_1\cap \widetilde{T}_1\right)\setminus S_1.$$

Now we can define our desired map $h_1$ as follows:
\begin{align*}
h_1(z)=
\begin{cases}
\,\hat{h}_1(z) &\mbox{if}~~z \in X_1,\\
\,\tilde{h}_1(z)&\mbox{if}~~z \in Y_1,\\
\,~z &\mbox{if}~~z \in V'_1,\\
\,\breve{h}_{1}(z)&\mbox{if}~~z\in P_1.
\end{cases}
\end{align*}
If $X_2,\,Y_2,\,V'_2$ and $P_2$ are the corresponding sets for $V_2$, then we define
\begin{align*}
h_2(z)=
\begin{cases}
\,\hat{h}_2(z) &\mbox{if}~~z \in X_2,\\
\,\tilde{h}_2(z)&\mbox{if}~~z \in Y_2,\\
\,~z &\mbox{if}~~z \in V'_2,\\
\,\breve{h}_{2}(z)&\mbox{if}~~z\in P_2.
\end{cases}
\end{align*}

Now we can consider the function
\begin{equation}\label{defG}
G(z)=
\left\{ \begin{array}{rl}
\wp_1 \circ h_1( h(z)+c_1), \quad\text{if} ~\Im(z) \geq 0; \\
\wp_2 \circ h_2( h(z)+c_2), \quad\text{if} ~\Im(z) < 0.
\end{array} \right.
\end{equation}
It follows from our construction that \eqref{finapur} holds. Therefore, the function $G$, defined in the above way, is continuous throughout the whole plane and thus is a quasi-meromorphic function. By the measurable Riemann mapping theorem there exists a meromorphic function $f$ and a quasiconformal map $\phi: \c\to\c$ such that
\begin{equation}\label{Gandf}
G = f \circ \phi.
\end{equation}

Since we have used two Weierstra{\ss} elliptic functions, the singular values of $f$ will be the singular values of the two Weierstra{\ss} $\wp$-functions. Therefore, $f$ will have at most $6$ finite critical values and one critical value at $\infty$.

\begin{lemma}\label{polemu}
All poles of $f$ have multiplicity $2$.
\end{lemma}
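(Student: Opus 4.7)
The plan is to analyze $f$ locally near each of its poles. Because $\phi$ is obtained from $G$ via the measurable Riemann mapping theorem (see \eqref{Gandf}), the complex chain rule forces $\mu_\phi = \mu_G$ almost everywhere. Wherever $G$ locally factors as a holomorphic map composed with a quasiconformal map $H$, this matching of dilatations will make $H \circ \phi^{-1}$ genuinely conformal, so $f$ will locally factor as a Weierstra{\ss} $\wp$-function post-composed with a conformal change of variable, from which the order-$2$ conclusion is automatic.

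Concretely, I would first identify the poles of $G$. From \eqref{defG}, $z_0 \in \h^+$ is a pole of $G$ iff $\zeta_0 := h_1(h(z_0)+c_1) \in \Lambda_1$, and analogously in $\h^-$; the choice of $\gamma$ in Proposition \ref{prop2} ensures that no pole of $G$ sits on the gluing locus $\R$. Near such a $z_0$, set $H(z) := h_1(h(z)+c_1)$: since $h$ from \eqref{maph} is conformal off the origin and $h_1$ is a $\mathcal{C}^1$-diffeomorphism with non-vanishing Jacobian, $H$ is a quasiconformal $\mathcal{C}^1$-diffeomorphism on a neighborhood of $z_0$, and $G = \wp_1 \circ H$ there. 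Put $w_0 := \phi(z_0)$ and $\Phi := H \circ \phi^{-1}$; since $\wp_1$ is holomorphic, one has $\mu_H = \mu_G = \mu_\phi$ on this neighborhood, and the standard chain-rule computation shows $\Phi_{\bar w} \equiv 0$, so $\Phi$ is a conformal homeomorphism with $\Phi(w_0) = \zeta_0$ and $\Phi'(w_0) \neq 0$. Combining with the local expansion $\wp_1(\zeta) = (\zeta-\zeta_0)^{-2}+O(1)$ gives $f(w) = \wp_1(\Phi(w))$ near $w_0$ and hence a pole of multiplicity exactly $2$. Poles with $z_0 \in \h^-$ are handled identically with $\wp_2, h_2$ in place of $\wp_1, h_1$.

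The main obstacle I expect is the regularity claim for $h_i$ near each lattice preimage: one must check that $h_i$ is truly a $\mathcal{C}^1$-diffeomorphism with non-vanishing Jacobian in a neighborhood of such a point. This has to be read off the explicit piecewise construction of $h_i$, whose ingredients are linear interpolations inside the quasiconformal strips on which $\hat{h}_i$ and $\tilde{h}_i$ are defined, the identity on the complementary sector $V_i'$, and the boundary-extended quasiconformal maps $\breve{h}_i$ on the parallelograms $P_i$. Each piece is $\mathcal{C}^1$-smooth on its own, but the matching across the seams between pieces requires care. Once this regularity is granted, the Beltrami-coefficient argument above yields the conformality of $\Phi$ and therefore the desired conclusion that every pole of $f$ has multiplicity $2$.
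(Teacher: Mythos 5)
Your argument is correct, but it takes a heavier route than the paper does. The paper's proof is purely topological: multiplicity of a pole is the local mapping degree, and this is preserved by pre- or post-composition with any local homeomorphism. Since $\phi$, the conformal map $h$, and the quasiconformal $\mathcal{C}^1$-diffeomorphisms $h_i$ are all homeomorphisms, the local degree of $f$ at $\phi(z_0)$ equals the local degree of the relevant $\wp_i$ at the corresponding lattice point, which is $2$; the only thing to check is that no pole lands on the gluing seams, which follows from the construction of $\beta_i$, $\tilde\beta_i$. Your Beltrami-coefficient argument ($\mu_\phi = \mu_G = \mu_H$ near $z_0$, hence $\Phi = H\circ\phi^{-1}$ satisfies $\mu_\Phi = 0$ a.e.\ and is conformal by Weyl's lemma) is valid and in fact proves more — that $f$ locally equals $\wp_i$ pre-composed with a conformal change of variable, not just that the degree is $2$ — but it costs more machinery. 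Note also that the ``obstacle'' you flag is a non-issue for your own argument: you do not need $h_i$ to be a $\mathcal{C}^1$-diffeomorphism with non-vanishing Jacobian, only that $H$ is quasiconformal (so that $\mu_H$ is defined a.e.\ and the chain rule for Beltrami coefficients applies); quasiconformality of $h_i$ is built into the construction, and Weyl's lemma then gives genuine holomorphy of $\Phi$ with no pointwise regularity input. Likewise the paper's topological version needs only that $h_i$ is a homeomorphism.
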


\begin{proof}
Since $\phi$ is a homeomorphism, it suffices to prove that all poles of $G$ are double poles. Note also that the map $h$ is conformal. Thus it suffices to check the poles of the Weierstra{\ss} elliptic functions $\wp_1$ and $\wp_2$ lying in $\tilde{V}_1$ and $\tilde{V}_2$. This is clear, since no poles $\wp_i$ are lying on the boundaries of $\tilde{V}_i$ by our choice of $\beta_i$ and $\tilde{\beta}_i$, which are certain preimages of analytic curves $\gamma$ and $\tilde{\gamma}$ not passing through $\infty$ (see the proof of Proposition \ref{prop2}).
\end{proof}

\medskip
\noindent{\emph{Asymptotic behaviours.}} To derive the asymptotic behaviours of the function $f$ near $\infty$ we use  Theorem \ref{twb}, which reduces to check, for our purposes, whether the supporting set of $\phi$ has finite logarithmic area. By our construction, the image of the supporting set $X$ under the conformal map $h$ is a union of two strips. Therefore,
$$\iint_{X\setminus\overline{\ud}}\frac{dxdy}{x^2+y^2}=\frac{4\pi^2}{\alpha^2}\iint_{h(X)\setminus\overline{\ud}}\frac{dxdy}{x^2+y^2}<\infty.$$
So, by Lemma \ref{logareaes} we know that $\phi$ is conformal at $\infty$ and thus may be normalised as
\begin{equation}\label{ac}
\phi(z)= z + o(z)\,\,\text{~as~}\,z\to\infty.
\end{equation}
This is then used to show below that
\begin{proposition}\label{02order}
$$\rho(f)=\frac{\alpha}{\pi}=\rho.$$
\end{proposition}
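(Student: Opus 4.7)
The plan is to apply Lemma \ref{tlemmma} to reduce the computation of $\rho(f)$ to estimating the pole-counting function $n(r, f)$. By construction $f \in \s$ has no asymptotic values, and by Lemma \ref{polemu} all its poles have multiplicity two, so the hypotheses of Lemma \ref{tlemmma} are met and
$$\rho(f) = \limsup_{r\to\infty} \frac{\log n(r, f)}{\log r}.$$
The relation \eqref{Gandf} combined with the normalisation $\phi(z) = z + o(z)$ from \eqref{ac} implies that $\phi(D(0,r))$ is an asymptotic perturbation of $D(0,r)$, so $n(r, f) \sim n(r, G)$ as $r \to \infty$. It then suffices to show that $n(r, G) \sim C r^{\alpha/\pi}$ for some constant $C > 0$.

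Next I would count the poles of $G$ by splitting along the real axis according to \eqref{defG}. In the upper half-plane, a pole of $G$ at $z$ corresponds to $h_1(h(z) + c_1)$ being a pole of $\wp_1$, i.e., a point of the lattice $\Lambda_1$ lying in $\tilde{V}_1$. Since $h(z) = z^{\alpha/(2\pi)}$ is the conformal bijection $\h^+ \to V_+$ with $|h(z)| = |z|^{\alpha/(2\pi)}$, counting poles of $G$ in $\h^+ \cap D(0, r)$ reduces to counting preimages under $h_1$ of $\Lambda_1$ inside $V_1 \cap D(c_1, r^{\alpha/(2\pi)})$. By construction $h_1$ equals the identity outside the bounded-width strips $S_1$, $\widetilde{T}_1$ and the bounded parallelogram $P_1$, and is globally bi-Lipschitz; hence up to an $O(r^{\alpha/(2\pi)})$ correction, this count equals the number of lattice points of $\Lambda_1$ in $V_1 \cap D(c_1, r^{\alpha/(2\pi)})$. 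A standard lattice-point estimate gives
$$\#\bigl\{\Lambda_1 \cap V_1 \cap D(c_1, R)\bigr\} = \frac{\area(V_1 \cap D(c_1, R))}{|\Im \tau_1|} + O(R) = \frac{\alpha R^2}{4 |\Im \tau_1|} + O(R),$$
since $V_+$ is a sector of opening $\alpha/2$. Substituting $R = r^{\alpha/(2\pi)}$ yields the upper half-plane contribution of $\frac{\alpha}{4 |\Im \tau_1|} r^{\alpha/\pi}$ to $n(r, G)$. The symmetric computation for the lower half-plane, with $\wp_2$ and $\tau_2$ in place of $\wp_1$ and $\tau_1$, contributes an analogous term. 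Summing gives $n(r, G) \sim C r^{\alpha/\pi}$, and Lemma \ref{tlemmma} delivers $\rho(f) = \alpha/\pi = \rho$.

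The main technical point to verify is that the quasiconformal corrections (both $\phi$ and the local modifications $h_i$) do not disturb the leading-order count. This is secured by the finiteness of the logarithmic area of the supporting sets: Lemma \ref{logareaes} gives $\phi(z) \sim z$, so $\phi$ preserves disks up to $o(r)$; and the strips supporting $h_i$ have bounded width, so their intersection with any disk of radius $R$ has area $O(R)$ and hence contains $O(R) = O(r^{\alpha/(2\pi)})$ lattice points, which is of lower order than the $r^{\alpha/\pi}$ main term.
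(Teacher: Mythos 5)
Your proposal is correct and follows essentially the same route as the paper: invoke Lemma \ref{tlemmma} to replace $\rho(f)$ by the growth rate of $n(r,f)$, use the asymptotic conformality $\phi(z)=z+o(z)$ from \eqref{ac} to pass to $n(r,G)$, and then count poles of $G$ by mapping $D(0,r)$ under the power map $h$ into sectors of radius $r^{\alpha/(2\pi)}$ and opening $\alpha/2$ and counting lattice points of $\Lambda_i$ there, exactly as in the paper's derivation of \eqref{howmany}. You supply more explicit error estimates and comment on why the bounded-width quasiconformal corrections do not affect the leading term, but this is the same argument with the implicit steps made explicit.
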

To see this, choose a closed disk $\overline{D}(0,r)$. Then the number of poles, counting multiplicities, of $f$ contained in this disk can be estimated by using \eqref{ac} as
\begin{equation}\label{howmany}
n(r,f)\sim n(r,G)\sim 2\,\frac{\frac{\alpha}{2\pi}\,\pi\cdot r^{\alpha/\pi}}{\Im\tau}=\frac{\alpha\,r^{\rho}}{\Im\tau}
\end{equation}
for sufficiently large $r$. According to Lemma \ref{tlemmma}, we see that
$$\rho(f)=\limsup_{r\to\infty}\frac{\log n(r,f)}{\log r}=\rho.$$
Since $\rho$ ranges in $(0,2)$, we see that any prescribed order in $(0,2)$ can be achieved.

\begin{remark}
	In certain cases, one can actually obtain functions with fewer critical values. For instance, if one takes $\alpha=\pi$, then the sector $V$ will be the right half plane. Without using two $\wp$-functions one can apply directly one Weierstra{\ss} elliptic function with periods $1$ and $\tau$, where $\tau$ is purely imaginary.
\end{remark}

\medskip
\noindent{\emph{Equivalence.}} Similarly as in the previous section, the functions obtained are equivalent except for certain special case. To be more specific, for $i=1,2$ we let $\rho_i\in (0,2)$ and $\rho_i\neq 1$. Suppose that the obtained functions are $f_i$. Then we have the following result.
\begin{theorem}\label{equd1}
	$f_1$ is quasiconformally equivalent to $f_2$.
\end{theorem}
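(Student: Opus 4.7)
The plan is to mimic the strategy used in the proof of Theorem \ref{equd}. By \cite[Proposition 2.3 (d)]{epstein5}, it suffices to show that $f_1$ and $f_2$ are topologically equivalent. Since each $f_i$ can be factored as $f_i = G_i \circ \phi_i$ with $\phi_i$ a quasiconformal homeomorphism of the plane (coming from \eqref{Gandf}) and $G_i$ the quasi-meromorphic function in \eqref{defG}, the problem reduces to exhibiting homeomorphisms $\varphi,\psi : \c \to \c$ with $\varphi \circ G_1 = G_2 \circ \psi$.

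To accomplish this, I would build a combinatorial skeleton for each $G_i$. Choose a Jordan curve $\Gamma_i \subset \hc$ passing through $\infty$ and through all (at most six) finite critical values of $G_i$ in a prescribed cyclic order; choose $\varphi : \hc \to \hc$ a homeomorphism carrying $\Gamma_1$ to $\Gamma_2$, fixing $\infty$, and sending each critical value of $G_1$ to the corresponding critical value of $G_2$. Let $\Lambda_i = G_i^{-1}(\Gamma_i)$. Then $\Lambda_i$ is an embedded graph in $\c$ whose vertices are the critical points of $G_i$ (together with the non-critical preimages of the other vertices of $\Gamma_i$) and whose faces are each mapped homeomorphically by $G_i$ onto one of the two Jordan domains $A_i, B_i$ into which $\Gamma_i$ cuts $\hc$.

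The heart of the argument is to check that $\Lambda_1$ and $\Lambda_2$ have the same combinatorial type. By the construction of Section \ref{sec3.2}, each $\Lambda_i$ is the preimage, under the conformal map $h(z)=z^{\alpha_i/(2\pi)}$ with $\alpha_i = \rho_i \pi$, of a graph $\Lambda_i^{\sharp}$ that tiles the sector $V$ of opening $\alpha_i$ by deformed parallelograms coming from the two Weierstrass functions $\wp_i^{(1)}, \wp_i^{(2)}$. Inside $V_{+}$ the tiling is the pullback under the quasiconformal map $h_1$ of the $\wp_1$-tiling; inside $V_{-}$ it is the pullback under $h_2$ of the $\wp_2$-tiling; these tilings are glued along the positive real axis (by \eqref{finapur}) and along the rays of argument $\pm\alpha_i/2$ (by the $\tau_i$-periodicity built into the construction of $\beta_i, \tilde{\beta}_i$). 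The hypothesis $\rho_i \neq 1$ guarantees that both sub-sectors $V_{\pm}$ have non-empty interior, so after applying the conformal map $z \mapsto z^{\pi/\alpha_i}$ sending $V$ onto $\c$ minus a ray (which is identified via $h_{\pm}$), we obtain the \emph{same} combinatorial tiling of the plane, independently of $\rho_i \in (0,2)\setminus\{1\}$. Fix a face containing (or adjacent to) the origin as $(0,0)$ and label all remaining faces by pairs of integers along the two "periodicity" directions inherited from the Weierstrass lattices, exactly as in the proof of Theorem \ref{equd}.

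Given this labeling, define $\psi : \Lambda_1 \to \Lambda_2$ as follows. For $z \in \Lambda_1$, the point $\varphi(G_1(z)) \in \Gamma_2$ has infinitely many preimages under $G_2$, but exactly one of them lies on the boundary of the face of $\Lambda_2$ carrying the same labels as the faces of $\Lambda_1$ adjacent to $z$; take this point to be $\psi(z)$. Extend $\varphi$ homeomorphically over $A_1 \cup B_1$, and then extend $\psi$ over each face of $\Lambda_1$ by lifting via the homeomorphism $G_i|_{\text{face}}$ onto the appropriate $A_i$ or $B_i$. The result is a homeomorphism $\psi : \c \to \c$ with $G_2 \circ \psi = \varphi \circ G_1$, which gives the desired topological (hence quasiconformal) equivalence.

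The main obstacle is verifying the combinatorial equivalence of the tilings $\Lambda_1^{\sharp}$ and $\Lambda_2^{\sharp}$ in the sectors. One must carefully keep track of which faces lie in $V_{+}$, which in $V_{-}$, and how they glue along the positive real axis and along the boundary rays; the need to exclude $\rho_i = 1$ comes precisely from the fact that in that case the sector $V$ degenerates to a half-plane and, as noted in the remark after Proposition \ref{02order}, one may use a single Weierstrass function, producing a tiling with strictly fewer critical orbits and therefore a different combinatorial type.
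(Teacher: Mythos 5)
Your proposal is correct and takes essentially the same approach as the paper, which omits the proof of Theorem \ref{equd1} with a pointer to the proof of Theorem \ref{equd}: you reduce to topological equivalence via \cite[Proposition 2.3 (d)]{epstein5}, pass to the quasi-meromorphic factors $G_i$, build the graphs $\Lambda_i = G_i^{-1}(\Gamma_i)$ with the integer face-labelling along the two periodicity directions, and extend $\psi$ face by face. Two minor slips in your explanatory remarks do not affect the argument: the map sending $V$ onto $\c$ minus a ray is $z\mapsto z^{2\pi/\alpha_i}$, not $z^{\pi/\alpha_i}$; and the exclusion of $\rho_i=1$ is not really a matter of the sector degenerating, but rather that $\alpha=\pi$ forces $\tau$ to be purely imaginary via $\tan(\alpha/2)=\Im\tau/\Re\tau$, so that $\overline{\tau}$ generates the same lattice and $\wp_1=\wp_2$, leaving the glued map with only the four critical values of a single $\wp$-function and hence a different combinatorial type.
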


Since there are uncountably many choices of $\rho_i$, we can have uncountably many quasiconformally equivalent Speiser functions but with different orders in $(0,2)$. The proof of the above theorem is similar as Theorem \ref{equd}, so we omit its proof.

\medskip
\noindent{\emph{Local behaviours near poles}.} We first consider local behaviours of the quasi-meromorphic function $G$ near its poles. Suppose that $z_0$ is a pole of $G$. It follows from our construction of $G$ in \eqref{defG} that $\zeta_0=h_i(z_{0}^{\rho/2}+c_i)$ is a pole of one of two Weierstra{\ss} $\wp$-function $\wp_i$. Assume that $i=1$. Put $\zeta=h_1(z_{0}^{\rho/2}+c_1)$. Use similar arguments as in the last section, we conclude that, there exists some constant $B_1$ not depending on the pole such that
\begin{equation}\label{res1}
G(z)\sim \left(\frac{B_1}{z^{\rho/2}-z_{0}^{\rho/2}}\right)^2\,\,\,\text{~as~}\,\,\,z\to z_0.
\end{equation}
By Lemma \ref{polemu}, $z_0$ is a double pole of $G$. We may thus assume that
\begin{equation}\label{res2}
G(z)\sim \left(\frac{B(z)}{z-z_{0}}\right)^2\,\,\,\text{~as~}\,\,\,z\to z_0,
\end{equation}
where $B(z)$ is holomorphic in a small neighbourhood of $z_0$ and moreover $B(z_0)\neq 0$. By comparing \eqref{res1} and \eqref{res2}, we see that
\begin{equation}\label{qmb}
B(z_0)= B_2\,z_{0}^{1-\rho/2}
\end{equation}
for some constant $B_2$. Since $G=f\circ\phi$, we see that $\phi(z_0)$ is a double pole of $f$. Put $w=\phi(z)$ and $w_0=\phi(z_0)$. By Lemma \ref{polemu}, we may assume that
$$f(w)\sim \left(\frac{b(w)}{w-w_{0}}\right)^2\,\,\,\text{~as~}\,\,\,w\to w_0,$$
where $b(w)$ is holomorphic in a small neighbourhood of $w_0$ and $b(w_0)\neq 0$. Therefore, it follows from the relation $G=f\circ\phi$ and \eqref{res2} and \eqref{qmb} that
$$\frac{b(w)}{w-w_0}\sim\frac{B(z)}{z-z_0}\,~\,~\,\text{as}~\,\,z\to z_0.$$
Note that $w\to w_0$ as $z\to z_0$ by Lemma \ref{logareaes}. So with \eqref{qmb}, we have
$$b(w_0)\sim B(z_0)\lim_{z\to z_0}\frac{w-w_0}{z-z_0}=B_3 z_{0}^{1-\rho/2},$$
where $B_3$ is certain constant. Again, by using Lemma \ref{logareaes}, we can have
\begin{equation}\label{res3}
b(w_0)= B_4\, w_{0}^{1-\rho/2},
\end{equation}
where $B_4$ is some constant.

\subsection{Speiser functions with order in $(2,\infty)$}\label{sec3.3}

The original construction of a meromorphic map with order $\rho \in (2,\infty)$ is here replaced by a simpler argument suggested to us by W. Bergweiler, where we use the result from the previous section. Let now $\rho  > 2$, and put $N = \lfloor \rho \rfloor$ and
$\sigma = \rho/N$. Then $\sigma \in [1,2)$. Hence we can find a meromorphic function $f$ with order $\sigma$ from Theorem \ref{leq2}. Now consider the map
\[
g(z) = f(z^N).
\]
Then this gives us the function with order bigger than $2$. Notice that from our construction in the last section, $0$ is not a pole or critical point of $f$. Thus the function $g$ also has only double poles, which are preimages of poles of $f$ under $z^{N}$. Moreover, one can check that $g$ has one more critical value which is $f(0)$. Thus we have

\begin{theorem}\label{geq2}
	For any given $\rho\in(2,\infty)$, there exists a Speiser meromorphic function $g$ of order $\rho$ which has no asymptotic values and has at most $8$ critical values. All poles are double poles.
\end{theorem}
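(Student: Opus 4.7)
The plan is to follow the strategy sketched at the start of Section \ref{sec3.3}: reduce to Theorem \ref{leq2} by pre-composing with an $N$-th power. Given $\rho\in(2,\infty)$, set $N=\lfloor\rho\rfloor\geq 2$ and $\sigma=\rho/N$. Since $N\leq\rho<N+1$, we have $1\leq\sigma<1+1/N\leq 3/2$, so $\sigma\in(0,2)$. Theorem \ref{leq2} then produces a Speiser meromorphic function $f$ of order $\sigma$ with at most $7$ critical values, no asymptotic values, and all poles of multiplicity $2$. Before proceeding, I would ensure that $0$ is not a pole of $f$; as indicated in the excerpt this is already built into the construction of Section \ref{sec3.2} (the origin lies in the central sector $V_i'$ where $h_i$ is the identity and the separating curves $\beta_i,\tilde\beta_i$ avoid poles of $\wp_i$). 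If needed, it can in any case be enforced by pre-translating $f\mapsto f(\,\cdot\,-a)$ for a suitable $a\in\c$, which preserves membership in $\s$, the order, the absence of asymptotic values, and the multiplicities of poles.

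Now set $g(z)=f(z^N)$, which is transcendental meromorphic. To check the order, observe that since $0$ is not a pole of $f$, each pole $w_0$ of $f$ in $\{|w|\leq r^N\}$ contributes exactly $N$ nonzero preimages under $z\mapsto z^N$, each a pole of $g$ of the same multiplicity as $w_0$; hence for all sufficiently large $r$,
\[
n(r,g)=N\cdot n(r^N,f).
\]
Since $g$ will have no asymptotic values (shown next), Lemma \ref{tlemmma} applies to $g$ and yields $\rho(g)=N\cdot\rho(f)=N\sigma=\rho$. For the singular set, every critical point of $g$ is either $z=0$ (contributing the single critical value $f(0)$, since $N\geq 2$) or a point $z_0\neq 0$ with $z_0^N$ critical for $f$ (contributing a critical value of $f$); thus
\[
\sing(g^{-1})\subseteq \sing(f^{-1})\cup\{f(0)\},
\]
which has at most $7+1=8$ elements (counting $\infty$, which is critical for $g$ via the double poles). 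If $a$ were an asymptotic value of $g$ along some curve $\gamma(t)\to\infty$, then $\gamma(t)^N\to\infty$ and $f(\gamma(t)^N)\to a$, making $a$ an asymptotic value of $f$, a contradiction. Hence $g\in\s$ with no asymptotic values and at most $8$ critical values.

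Finally, every pole $z_0$ of $g$ satisfies $z_0^N\neq 0$, so $z\mapsto z^N$ is locally biholomorphic at $z_0$, and the multiplicity of $z_0$ as a pole of $g$ equals the multiplicity of $z_0^N$ as a pole of $f$, which is $2$. The only genuinely delicate step is the input requirement that $0$ not be a pole of $f$; this is the unique place where one uses either the explicit nature of the construction in Section \ref{sec3.2} or the harmless normalising translation, and it is essential because otherwise the composition would produce a pole of multiplicity $2N>2$ at the origin, destroying property (2). Everything else is bookkeeping applied to the chain rule and the multiplicative effect of $z\mapsto z^N$ on the counting function.
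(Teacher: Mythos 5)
Your proof is correct and follows essentially the same approach as the paper: set $N=\lfloor\rho\rfloor$, $\sigma=\rho/N\in(0,2)$, take $f$ of order $\sigma$ from Theorem~\ref{leq2}, and consider $g(z)=f(z^N)$, noting that $0$ is neither a pole nor a critical point of $f$ by the Section~\ref{sec3.2} construction. You supply more of the verification (the identity $n(r,g)=N\,n(r^N,f)$ together with Lemma~\ref{tlemmma}, the inheritance of the no-asymptotic-values property, and the chain-rule bookkeeping for critical values and pole multiplicities) than the paper's brief sketch, but the underlying argument is identical.
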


The local behaviour near poles for $g$ will be similar to the description in the previous section for the case $\rho \in (0,2)$. More precisely, if 
$$g(w)\sim \left(\frac{B(w)}{w-w_{0}}\right)^2\,\,\,\text{~as~}\,\,\,w\to w_0,$$
where $B$ is holomorphic near $w_0$ and non-zero at $w_0$, then 
\begin{equation}\label{res4}
B(w_0)= B_4 \, w_{0}^{1-\rho/2},
\end{equation}
where $B_4$ as before is some constant.

\medskip
\noindent{\emph{Equivalence}.} The existence of uncountably many quasiconformally equivalent meromorphic functions with different orders in this situation can be proved in the same way as in Theorem \ref{equd}. So we omit details here and only state the result as follows.

\begin{theorem}\label{thm42}
There exist uncountably many meromorphic functions in the Speiser class which are mutually quasiconformally equivalent but of different orders in $(2,\infty)$.
\end{theorem}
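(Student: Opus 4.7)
Plan of proof. Fix an integer $N \geq 2$ and let $J \subset (1,2)$ be any uncountable set. For each $\sigma \in J$ let $f_\sigma$ be the Speiser function of order $\sigma$ produced by Theorem \ref{leq2}, and put $g_\sigma(z) = f_\sigma(z^N)$. By Theorem \ref{geq2}, $g_\sigma$ is a Speiser function of order $N\sigma \in (N, 2N) \subset (2,\infty)$, and distinct values of $\sigma$ give distinct orders. The theorem therefore reduces to showing that for any $\sigma_1, \sigma_2 \in J$ the maps $g_{\sigma_1}$ and $g_{\sigma_2}$ are quasiconformally equivalent.

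By Theorem \ref{equd1}, there exist quasiconformal homeomorphisms $\varphi, \psi: \c \to \c$ with $\varphi \circ f_{\sigma_1} = f_{\sigma_2} \circ \psi$. The first task is to arrange $\psi(0) = 0$. The topological equivalence underlying the proof of Theorem \ref{equd1} is built from a labelled tiling of the plane in which one face is singled out as $(0,0)$ and all others are obtained by translating along the horizontal and $\tau_i$-directions. Since $0$ is a regular non-singular point for each $f_\sigma$ (see the remark opening Section \ref{sec3.3}), we may choose the $(0,0)$-label for the face containing (or meeting) the origin in both graphs, and the extension of the boundary homeomorphism to that face can be arranged to send $0$ to $0$.

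The second task is to lift $\psi$ through the branched covering $p(z) = z^N$. Since $\psi$ is an orientation-preserving homeomorphism with $\psi(0) = 0$, the map $\psi \circ p$ winds $\pm N$ times around $0$ on a small loop around the origin, a multiple of $N$, so classical covering-space theory yields a unique $\mathbb{Z}/N$-equivariant homeomorphism $\tilde\psi: \c \setminus\{0\} \to \c \setminus\{0\}$ satisfying $p \circ \tilde\psi = \psi \circ p$, equivalently $\tilde\psi(z)^N = \psi(z^N)$. Since $\psi(z^N) \to 0$ as $z \to 0$, we extend $\tilde\psi$ continuously by setting $\tilde\psi(0) = 0$. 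Because $p$ is holomorphic on $\c\setminus\{0\}$, the complex dilatation of $\tilde\psi$ on $\c\setminus\{0\}$ is the pullback by $p$ of the complex dilatation of $\psi$; in particular $\tilde\psi$ has the same quasiconformal dilatation constant as $\psi$, and the removability of isolated singularities for quasiconformal maps promotes $\tilde\psi$ to a quasiconformal homeomorphism of $\c$. Taking $\tilde\varphi = \varphi$ we compute
\[
\tilde\varphi(g_{\sigma_1}(z)) = \varphi(f_{\sigma_1}(z^N)) = f_{\sigma_2}(\psi(z^N)) = f_{\sigma_2}(\tilde\psi(z)^N) = g_{\sigma_2}(\tilde\psi(z)),
\]
which establishes the required equivalence.

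The main obstacle is the reduction to $\psi(0) = 0$, which is not provided by the black-box statement of Theorem \ref{equd1} but is available once one inspects the combinatorial labelling construction that actually underlies it, as sketched above. Once this is in hand, the remainder is a routine application of the $N$-fold lifting, and quasiconformality is preserved because the branched covering $p$ is holomorphic off the isolated (removable) branch point at the origin.
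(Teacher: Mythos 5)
Your approach is genuinely different from the paper's. The paper asserts, without giving details, that the labelled-tiling argument of Theorem \ref{equd} applies directly to the order-$>2$ functions $g_i$; this requires setting up a new labelled tiling for the graphs $g_i^{-1}(\Gamma)$, whose structure near the origin now reflects the branch point of $z^N$. You instead lift the already-established equivalence of the order-$<2$ building blocks $f_{\sigma_i}$ through the branched covering $p(z)=z^N$. This is conceptually cleaner: once $\psi(0)=0$ the winding-number criterion makes the lift automatic, the identity $\tilde\varphi\circ g_{\sigma_1}=g_{\sigma_2}\circ\tilde\psi$ is a one-line computation, and (as you observe) quasiconformality of the lift follows because pre- and post-composition with the holomorphic map $p$ away from its branch point leaves the modulus of the Beltrami coefficient unchanged, the branch point itself being removable.

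There is one wrinkle to address. Theorem \ref{equd1}, following the proof of Theorem \ref{equd}, is obtained in two stages: first a \emph{topological} equivalence $\varphi_{top}\circ f_{\sigma_1}=f_{\sigma_2}\circ\psi_{top}$ is built from the labelled tiling, and then \cite[Prop.~2.3(d)]{epstein5} upgrades this to a \emph{quasiconformal} equivalence $\varphi\circ f_{\sigma_1}=f_{\sigma_2}\circ\psi$. Your inspection of the labelling shows that one may arrange $\psi_{top}(0)=0$, but you then lift the quasiconformal $\psi$ rather than $\psi_{top}$, and the quasiconformal $\psi$ produced by the black box of \cite{epstein5} need not fix the origin. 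The simplest repair is to lift the \emph{topological} equivalence: since $\psi_{top}$ is an orientation-preserving homeomorphism with $\psi_{top}(0)=0$, it lifts to a homeomorphism $\tilde\psi_{top}$ of $\c$ with $p\circ\tilde\psi_{top}=\psi_{top}\circ p$, and your computation then shows that $g_{\sigma_1}$ and $g_{\sigma_2}$ are \emph{topologically} equivalent via $(\varphi_{top},\tilde\psi_{top})$. Applying \cite[Prop.~2.3(d)]{epstein5} to the Speiser functions $g_{\sigma_1}$ and $g_{\sigma_2}$ then yields the required quasiconformal equivalence, so in fact the argument about quasiconformality of the lift is not needed.
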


\section{Hausdorff dimension of escaping sets}\label{eses}

In this section we prove our theorem: every number in $[0,2]$ can be  the Hausdorff dimension of escaping sets of certain Speiser functions.

\subsection{Escaping sets of zero dimension}

There are indeed Speiser meromorphic functions whose escaping sets have zero Hausdorff dimension. In fact, it follows from \cite[Theorem 1.1]{bergweiler2} that any class $\b$ meromorphic function of zero order with bounded multiplicities of poles will have escaping sets of zero Hausdorff dimension. One such example is given as follows: Let $\tau=2\pi i$, consider a lattice defined as
$$\Lambda=\{m+n\tau: m, n\in\mathbb{Z}\}.$$
We denote by $\wp=\wp_{\Lambda}$ the Weierstra{\ss} elliptic function with respect to the above lattice. Put
$$S=\left\{z=x+iy:\,x>0,\,|y|<\pi\right\}.$$
Then we can take a branch of  the inverse of $\cosh$, denoted by  $\varphi$, such that
$$\varphi: \c\setminus (-\infty, 1]\to S$$
is conformal. By setting
$$f(z)=\wp(\varphi(z)),$$
we see that $f$ is meromorphic in $\c\setminus (-\infty, 1]$. To see that this function is actually meromorphic in the whole plane, we need to show that the above $f$ extend continuously across $(-\infty,1]$. Now take any $x\in (-\infty, 1]$. Then the extension of $\varphi$ to $(-\infty,1]$ on both sides will map $x$ respectively to two points $z_{1}$ and $z_2$ on $\partial S$ which are complex conjugate. It then follows from the our choice of periods of $\wp$ that $\wp(z_1)=\wp(z_2)$. Thus we have a function meromorphic in the plane, denoted again by $f$. It follows from the construction that $f$ is a Speiser function which has no asymptotic values and four critical values at the critical values of $\wp$. The order of $f$ can be obtained by considering the counting function of poles, which are located at $\cosh(m)$, where $m\in\mathbb{N}$. By computation, one see that the order $\rho(f)=0$. All poles of $f$ are double poles, except for the one at $z=1$, which is a simple pole. Thus this function satisfies the condition of \cite[Theorem 1.1]{bergweiler2}. Therefore, the Hausdorff dimension of the escaping set of this function is zero.

\subsection{The general case}\label{generalcase}

In this part, we will estimate the Hausdorff dimension of the escaping set of the function constructed in Section \ref{CS}. More precisely, we will show the following.

\begin{theorem}\label{thm51}
	Given $\rho\in(0,\infty)$, there exists a Speiser meromorphic function $f$ of order $\rho$ such that
$$\dim\I(f)=\frac{2\rho}{1+\rho}.$$
\end{theorem}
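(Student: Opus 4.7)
The plan is to take $f$ to be the Speiser function of order $\rho$ from Section \ref{CS} (Theorem \ref{leq2}, Theorem \ref{sumup1}, or Theorem \ref{geq2} depending on whether $\rho\in(0,2)$, $\rho=2$, or $\rho>2$), and to exploit: the pole count $n(r,f)\asymp r^\rho$ (from \eqref{howmany} and Lemma \ref{tlemmma}); the pole expansion $f(w)\sim(B(w_0)/(w-w_0))^2$ with $|B(w_0)|\asymp|w_0|^{1-\rho/2}$ (from \eqref{res3}, \eqref{res4}); the finiteness of $\sing(f^{-1})$, guaranteeing univalent inverse branches outside a bounded set; and Koebe's theorem (Theorem \ref{koebe}).

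For the lower bound I would apply Theorem \ref{mcmlower} to a Cantor family indexed by pole sequences. Fix $R$ large and set $R_n=2^n R$. For each pole $w$ with $|w|\in[R_n,2R_n]$, let $V^{(n)}_w$ be the connected preimage component of $\{|z|>R_n\}$ containing $w$; by the local expansion it is a topological disk of Euclidean radius $\asymp R_n^{(1-\rho)/2}$ on which $f$ is a two-sheeted branched cover onto $\{|z|>R_n\}$. Take $E_\ell$ to be the collection of disks $V^{(\ell)}_{(w_1,\ldots,w_\ell)}$ obtained by selecting, for each admissible pole sequence and each step, one of the two univalent sheets, and pulling back $V^{(\ell)}_{w_\ell}$ by the corresponding branch of $f^{-(\ell-1)}$ inside the level-$(\ell-1)$ parent. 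For $R$ large enough no element meets $\sing(f^{-1})$, so the construction is valid.

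A direct count using $|B(w)|\asymp|w|^{1-\rho/2}$ yields, at level $\ell$, one-step preimage disks of area $\asymp R_\ell^{2-\rho}/R_{\ell+1}^{2+\rho}$ and multiplicity $\asymp R_{\ell+1}^{\rho}$; against the area $\asymp R_\ell^{1-\rho}$ of $V_{w_\ell}^{(\ell)}$ this gives $\Delta_\ell \asymp R_\ell/R_{\ell+1}^2 \asymp 2^{-\ell}/R$. The chain rule with $|f'|\asymp R_{k+1}^{3/2}/R_k^{1-\rho/2}$ at step $k$ then yields $\log|(f^{\ell-1})'|\sim \tfrac{1+\rho}{4}\,\ell^2\log 2$, and hence $|\log d_\ell|\sim \tfrac{1+\rho}{4}\,\ell^2\log 2$. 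Substituting into Theorem \ref{mcmlower},
\[
\limsup_{\ell\to\infty}\frac{\sum_{j=1}^{\ell+1}|\log\Delta_j|}{|\log d_\ell|} \;=\; \frac{2}{1+\rho},
\]
so $\dim E\geq 2-\tfrac{2}{1+\rho}=\tfrac{2\rho}{1+\rho}$. Since $|f^\ell(z)|>R_\ell\to\infty$ for every $z\in E$, we have $E\subseteq\I(f)$, giving the lower bound.

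The upper bound $\dim\I(f)\leq 2\rho/(1+\rho)$ follows from the dual covering argument: cover $\I(f)\cap K$ for any compact $K$ by the same iterated preimage disks and sum $d$-th powers of their diameters. The same local-derivative and pole-counting estimates show the resulting series converges geometrically for every $d>2\rho/(1+\rho)$, so $\mathcal{H}^d(\I(f))=0$. The main technical obstacle throughout is controlling univalent branches of $f^{\ell-1}$: because every pole is a double pole, the single-step pullback is $2$-to-$1$, so one must separate sheets (for instance by slitting each $V^{(n)}_w$ along a radial arc through $w$) before Koebe can be applied. A secondary subtlety is that the product estimates above rest on the clean scaling $|B(w_0)|\asymp|w_0|^{1-\rho/2}$, which is precisely what the explicit constructions of Section \ref{CS} were designed to provide.
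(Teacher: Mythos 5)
Your lower bound argument is essentially the paper's second McMullen computation: the paper first estimates $\dim\j_R(f)$ with a fixed $R$ and then reruns the argument with $R_k=e^k$ to land inside $\I(f)$; you go straight to the escaping set with $R_k=2^kR$, but the ingredients (pole count $n(r,f)\asymp r^\rho$, residue scaling $|b_j|\asymp|a_j|^{1-\rho/2}$, Koebe distortion, Theorem~\ref{mcmlower}) and the asymptotics $\sum|\log\Delta_j|\sim\tfrac12\ell^2\log2$ versus $|\log d_\ell|\sim\tfrac{1+\rho}{4}\ell^2\log2$ are the same, and your limit $\tfrac{2}{1+\rho}$ is correct. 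Where you genuinely depart from the paper is the upper bound: the paper simply cites \cite[Theorem~1.1]{bergweiler2} (with $M=2$), whereas you sketch a direct covering argument. That sketch has a gap. The iterated preimage disks you built for the lower bound (poles confined to geometric annuli $[R_\ell,2R_\ell]$) do not cover $\I(f)\cap K$; for the covering you must use all components $V$ of $f^{-\ell}(B(R))$ with $f^k(V)\subset B(R)$ for $k<\ell$, and then account separately for the fact that an escaping orbit need not leave $\overline D(0,R)$ at time $0$ — one has to write $\I(f)\subset\bigcup_{m\ge0}f^{-m}\bigl(\{z:f^k(z)\in B(R)\ \forall k\ge0\}\bigr)$ and invoke countable stability of Hausdorff dimension together with the fact that $f$ does not increase the dimension of subsets of $\c$ away from poles. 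Moreover, when you sum $d$-th powers of diameters over \emph{all} admissible pole sequences (not just those confined to a single annulus per level), the bound on $\sum\diam^d$ requires the full counting estimate $\sum_{|a_j|>R}|a_j|^{-d(1+\rho)/2}<\infty$ for $d>\tfrac{2\rho}{1+\rho}$, which you allude to but do not write down. None of this is wrong in spirit — it is essentially the content of the Bergweiler–Kotus proof — but it needs to be spelled out rather than asserted; or you can simply quote their theorem as the paper does.
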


Before we proceed with the proof of this theorem, we show first that how to deduce our Theorem \ref{thm1} and Theorem \ref{thm3} stated in the introduction from this result.

\begin{proof}[Proof of Theorem \ref{thm1}]
	By the above discussions, it suffices to consider the case $d\in(0,2)$. For such $d$, we let $\rho={d}/{(2-d)}$. Then Theorem \ref{thm51} assures the existence of a Speiser function $f$ with order $\rho$ for which the Hausdorff dimension of the escaping set is equal to $d$. The other properties of $f$ stated in Theorem \ref{thm1} follows from our construction.
\end{proof}

\begin{proof}[Proof of Theorem \ref{thm3}]
	By Theorem \ref{equd1} or Theorem \ref{thm42}, for two distinct $\rho_1,\,\rho_2\in (0,2)$ or $(2,\infty)$, there exist two quasiconformally equivalent meromorphic functions $f_1$ and $f_2$ whose orders are respectively $\rho_1$ and $\rho_2$. The above Theorem \ref{thm51} then says that their escaping sets have Hausdorff dimensions $2\rho_i/(1+\rho_i)$, which are different. Since there are uncountably many choices for $\rho_i$, the conclusion follows clearly.
\end{proof}

It remains to prove Theorem \ref{thm51}. The existence of a Speiser function $f$ for a given $\rho$ has been given in Section \ref{CS}. The rest of this subsection is then devoted to checking that the Hausdorff dimension of the escaping set of the constructed function is equal to $2\rho/(1+\rho)$. We will prove this by estimating the Hausdorff  dimension from above and from below in the following. Firstly,
the upper bound follows directly from Theorem 1.1 in \cite{bergweiler2}, i.e. that the Hausdorff dimension is at most $\frac{2 \rho}{1+ \rho}$ ($M=2$ in Theorem 1.1 in \cite{bergweiler2}). Thus it remains to prove the lower bound. Before proving this estimate, we need some preliminaries.

\smallskip
Since $f$ has only finitely many singular values, we may take $R_0>0$ sufficiently large such that $\sing(f^{-1})$ is contained in $D(0,R_0)$. Put $B(R)=\hc\setminus\overline{D}(0,R)$. Then for $R>R_0$ each component of $f^{-1}(B(R))$ is bounded, simply connected and contains exactly one pole of $f$; see \cite[Lemma 2.2]{bergweiler2}. Let $\{a_j\}$ be poles of $f$, arranged in the way such that $\cdots\leq|a_j|\leq|a_{j+1}|\leq\cdots$. Then by discussions in Section \ref{CS} and Theorems \ref{leq2} and \ref{geq2}, all poles are have multiplicity two. From the local behaviour near poles in Section \ref{CS}, we have 
$$f(z)\sim\left(\frac{b_j}{z-a_j}\right)^2\,\,\,\text{~as~}\,\,\,z\to a_j,$$
where
\begin{equation}\label{repole}
|b_j|\sim |a_j|^{1-\rho/2}.
\end{equation}
Denote by $U_j$ the component of $f^{-1}(B(R))$ containing $a_j$. Let $\varphi_j : U_j\to D(0,1/\sqrt{R})$ be a conformal map satisfying $\varphi_{j}(a_j)=0$. Since $|f(z)\varphi_j(z)^2|$ tends to $1$ as $z$ approaches the boundary of $U_j$, and $|f(z)\varphi_j(z)^2|$ is bounded near $a_j$ and not equal to zero in $U_j$. The maximum principle ensures that $|f(z)\varphi_j(z)^2|=1$ and that $|\varphi'_j(a_j)|=1/|b_j|$. 

Since $\varphi_j$ is conformal and $\varphi_{j}(U_j)=D(0,1/\sqrt{R})$, by applying \eqref{one-quarter theorem} of Theorem \ref{koebe} to the inverse function of $\varphi_j$, we can have
$$U_j\supset D\left(a_j,\frac{1}{4}\frac{1}{|\varphi'_{j}(a_j)|\sqrt{R}}\right)=D\left(a_j,\frac{|b_j|}{4\sqrt{R}}\right).$$
Moreover, by choosing $R$ sufficiently large (say, $R>4R_0$), the inverse of $\varphi_j$ can extend to a map which is univalent in $D(0,2/\sqrt{R})$. So with \eqref{estimate of value} of Theorem \ref{koebe} we see that, by putting $\lambda=1/2$,
$$U_{j}\subset D\left(a_j, \frac{\lambda}{(1-\lambda)^2}\frac{1}{|\varphi'_{j}(a_j)|\sqrt{R}}\right)=D\left(a_j, \frac{2|b_j|}{\sqrt{R}}\right).$$
Therefore, we have a good control over the size of $U_j$ in the following sense:
\begin{equation}\label{51}
D\left(a_j,\frac{|b_j|}{4\sqrt{R}}\right)\subset U_j \subset D\left(a_j,\frac{2|b_j|}{\sqrt{R}}\right).
\end{equation}
Moreover, for $z$ in any simply connected domain $D\subset B(R)\setminus\{\infty\}$, by the Monodromy theorem one can define all branches of the inverse of $f$. Denote by $g_j$ an inverse branch of $f$ from $D$ to $U_j$. Then
\begin{equation}\label{52}
\left|g'_{j}(z)\right|\leq C_1\,\frac{|b_j|}{|z|^{3/2}},
\end{equation}
for $z\in D$ and for some constant $C_1>0$. In later estimates, $D$ is usually chosen to be $U_k$ for large $k$.

\bigskip
For $k$ sufficiently large, we have $U_k\subset B(R)$. Then by \eqref{51} and \eqref{52}, one can see that,
$$\diam g_j(U_k)\leq\sup_{z\in U_k}|g'_j(z)|\diam U_k \leq C_1\frac{|b_j|}{|a_k|^{3/2}}\frac{4|b_k|}{\sqrt{R}}.$$
Now suppose that $U_{j_1},\,U_{j_2},\dots, U_{j_{\ell}}$ all are contained in $B(R)$. By the above estimate and induction, we have
\begin{equation}\label{53}
\begin{aligned}
\diam\left(g_{j_1}\circ g_{j_2}\circ\cdots\circ g_{j_{\ell-1}}\right)(U_{j_{\ell}})&\leq C_1\frac{|b_{j_1}|}{|a_{j_2}|^{3/2}}\cdots C_1\frac{|b_{j_{\ell_2}}|}{|a_{j_{\ell-1}}|^{3/2}}\cdot C_1\frac{|b_{j_{\ell-1}}|}{|a_{j_{\ell}}|^{3/2}}\cdot\frac{4|b_{j_{\ell}}|}{\sqrt{R}}\\
&= C_{1}^{\ell-1}\,\frac{4}{\sqrt{R}}\,|b_{j_1}|\,\prod_{k=2}^{\ell}\,\frac{|b_{j_k}|}{|a_{j_k}|^{3/2}}.
\end{aligned}
\end{equation}
In terms of spherical metric, we obtain
\begin{equation}\label{54}
\diam_{\chi}\left(g_{j_1}\circ g_{j_2}\circ\cdots\circ g_{j_{\ell-1}}\right)(U_{j_{\ell}})\leq C_{1}^{\ell-1}\,\frac{32}{\sqrt{R}}\,\prod_{k=1}^{\ell}\,\frac{|b_{j_k}|}{|a_{j_k}|^{3/2}}.
\end{equation}

Now we consider the set of points whose forward orbit always stay in $B(R)$. More precisely, we are looking at a subset of 
$$\j_{R}(f)=\left\{\,z\in B(R):\,f^{n}(z)\in B(R)~\,\text{for all}\,~n\in\mathbb{N}\, \right\}.$$
We also set
$$\I_R(f)=\j_R(f)\cap\I(f).$$
Let also $E_l$ be the collection of all components $V$ of $f^{-l}(B(R))$ for which $f^{k}(V)\subset B(R)$ holds for $0\leq k\leq l-1$. 
Then $E_l$ will be a cover of
$$\left\{\,z\in B(3R):f^{k}(z)\in B(3R)~\,\text{for}\,~0\leq k\leq l-1\,  \right\}.$$

We will use Theorem \ref{mcmlower} in Section \ref{pre} to estimate the lower bound. For $V\in E_l$, there exist $j_1, j_2,\dots,j_{l-1}$ such that
$$f^{k}(V)\subset U_{j_{k+1}}~\,\,\text{for}\,\,~k=0, 1, \dots, l-1.$$
Then by \eqref{res3}, \eqref{res4} and \eqref{54} we have
\begin{equation}\label{diamchiv}
\diam_{\chi}(V)\leq C_{1}^{\ell-1}\,\frac{32}{\sqrt{R}}\,\prod_{k=1}^{\ell}\,\frac{|b_{j_k}|}{|a_{j_k}|^{3/2}}=C_{1}^{\ell-1}\,\frac{32}{\sqrt{R}}\,\prod_{k=1}^{\ell}\,\frac{|B_4|^{\ell}}{|a_{j_k}|^{1/2+\rho/2}}\leq \left(\frac{C_2}{R^{1/2+\rho/2}}\right)^l
\end{equation}
where $C_2>0$ is some constant. In the last inequality we have used the fact that $|a_{j_{k}}|\geq R$. Thus, we put
\begin{equation}\label{dl}
d_l=\left(\frac{C_2}{R^{1/2+\rho/2}}\right)^l.
\end{equation}

To estimate the density of $\overline{E}_{l+1}$ in $V$, we consider for $s\,(\geq R)$ and for large $R$ the following annulus
$$A(s)=\left\{\,z: s<|z|<2s\,\right\}.$$
For $s \geq R$ we have $A(s) \subset B(R)$. Note first that the number of $U_j$'s lying in $A(s)$ is $n(2s, f)-n(s,f)$. (Recall that $n(r,f)$ is the number of poles of $f$ in the disk $\overline{D}(0,r)$). Using \eqref{howmany}, we see that $n(2s, f)-n(s,f)=C_3 s^{\rho}$ with $\rho=\rho(f)$ and certain constant $C_3>0$. By \eqref{51}, \eqref{res3} and \eqref{res4}, for these $U_j$ (contained in $A(s)$),
$$\diam U_j\geq \frac{|b_j|}{2\sqrt{R}}= |B_4|\frac{|a_j|^{1-\rho/2}}{2\sqrt{R}}\geq |B_4|\frac{s^{1-\rho/2}}{2\sqrt{s}}= C_4\,\frac{1}{s^{\rho/2-1/2}},$$
where $C_4=|B_4|/2$. This gives, with $C_5=\pi C_3 C_{4}^2$,
$$\area\left(\overline{E}_1 \cap  A(s)\right)\geq C_3\,s^{\rho}\cdot \pi\left( \frac{C_4}{s^{\rho/2-1/2}}\right)^2={C_5}\,s.$$
So we have
$$ \dens\left(\overline{E}_1, A(s)\right)\geq \frac{C_5}{3\pi s}.$$
Put $E_l^s = E_l \cap f^{-l}((A(s))$. 
Note that, by definition of $V\in E_{l}^s$, there is some $j$ such that $f^{l-1}(V)=U_j$.
By repeated use of Theorem \ref{koebe}, we see that
\begin{equation}\label{wik}
\dens_{\chi}\left(\overline{E}_{l+1}, V \right)\geq \frac{C_7}{R}.
\end{equation}
Here $C_6>0$ is certain constant. Choose $s=2^{k}R$, we have that
$$\dens_{\chi}\left(\overline{E}_{l+1}, V \right)\geq \frac{C_7}{R},$$
where $C_7=C_6/2^k$. Put
\begin{equation}\label{Deltal}
\Delta_l=\frac{C_7}{R}.
\end{equation}

Now with \eqref{dl} and \eqref{Deltal}, we apply Theorem \ref{mcmlower} to obtain that
$$\dim E\geq 2-\limsup_{l\to\infty}\frac{(l+1)\cdot(\log C_7-\log R)}{l\cdot(\log C_2-(\frac{1}{2}+\frac{\rho}{2})\log R)}=2-\frac{\log C_7-\log R}{\log C_2-(\frac{1}{2}+\frac{\rho}{2})\log R}.$$
By taking $R\to\infty$, we obtain
$$\dim E\geq\frac{2\rho}{1+\rho},$$
which implies that
$$\dim\j_{R}(f)\geq\frac{2\rho}{1+\rho}.$$

Now we identify a subset of $\I(f)$ whose Hausdorff dimension gives the right magnitude. To do this, we take an increasing sequence $(R_{k})$ tending to $\infty$ and consider the set of points whose $k$-th iterate falls into $B(R_k)$. Now for each $k$ we define $E_{k}$ as the collection of components $V$ of $f^{-k}(B(R_{k}))$ such that $f^{m}(V)\subset B(R_{k})$ for $0\leq m\leq k-1$. Denote by $\overline{E}_{k}$ the union of the components in $E_{k}$ and put $E=\cap_k\overline{E}_{k}$. It follows that $E$ is a subset of the escaping set $\I(f)$. Now by using similar estimates as before, we can have (compare with \eqref{diamchiv})
$$\diam_{\chi}(V)\leq \prod_{j=1}^{k}\frac{C_8}{R_{j}^{1/2+\rho/2}}$$
and (compare with \eqref{wik})
$$\dens_{\chi}\left(\overline{E}_{k+1}, V \right)\geq \frac{C_9}{R_k}.$$
Here $C_8, C_9>0$ are some constants. So, by putting 
$$d_{k}=\prod_{j=1}^{k}\frac{C_8}{R_{k}^{1/2+\rho/2}}~\,\quad\text{and}\quad\,~\Delta_k=\frac{C_9}{R_k},$$
and using Theorem \ref{mcmlower} we see that
$$\dim E\geq 2-\limsup_{k\to\infty}\frac{(k+1)\log C_9-\sum_{j=1}^{k+1}\log R_j}{k\log C_8-(\frac{1}{2}+\frac{\rho}{2})\sum_{j=1}^{k}\log R_j}$$
Choose a suitable sequence $(R_k)$, say $R_k=e^k$, we obtain
$$\dim\I(f)\geq \dim E\geq \frac{2\rho}{1+\rho}.$$
This completes the proof of Theorem \ref{thm51} and thus Theorem \ref{thm1}.

\subsection{Escaping set of full dimension}\label{efd} 
For having a meromorphic function with an escaping set of full Hausdorff dimension, one can consider a meromorphic function in the class $\s$ with finite order and with one logarithmic singularity over $\infty$. Simple examples can be obtained by considering a meromorphic function $f$ with a polynomial Schwarzian derivative and with more than three singular values. The function $f$ has only finitely many asymptotic values and no critical values (and hence belongs to the class $\s$). Moreover, it has finite order of growth. Assume also that such a function has $\infty$ as an asymptotic value (otherwise we apply a M\"obius map sending one of the asymptotic value to $\infty$ and the resulted function still has finite order by Theorem \ref{fft}). Then the escaping set has full dimension by repeating the argument by Bara\'nski \cite{baranski1} and  Schubert \cite{schubert}. 

However, our main intention here is to find meromorphic functions in class $\s$ for which $\infty$ is \emph{not} an asymptotic value and for which the escaping set has full dimension. By \cite[Theorem 1.1]{bergweiler2}, such a function should either have infinite order and bounded multiplicities for poles or have finite order and unbounded multiplicities for poles. We provide an example with the former property and with full dimension of escaping set. Let $\wp$ be a Weierstra{\ss} elliptic function with respect to a lattice and $c$ is chosen such that it is not a pole of $\wp$. As stated in Proposition \ref{full}, we will consider the function
$$f(z)=\wp(e^z+c).$$
Note that $f$ belongs to the class $\s$. Moreover, $\infty$ is not an asymptotic value of $f$ due to our choice of $c$. All poles of $f$ are double poles.

To prove Proposition \ref{full}, we apply the same method as in the Section \ref{generalcase}; i.e., we use Theorem \ref{mcmlower} to get the lower bound $2$ for $\dim\j_{R}(f)$. And then by taking a sequence $R_k$ tending to infinity we can estimate the Hausdorff dimension of a subset of the escaping set, which is sufficient to get the right lower bound. We will not give a full detailed proof but only address the ideas and difference from the previous case. Let now $z_0$ be a pole of $f$ and the residue of $f$ at $z_0$ is $b(z_0)$. Then by computation using L'Hospital's rule we have
$$|b(z_0)|=\frac{A_1}{|e^{z_0}|}=\frac{A_1}{e^{\re z_0}}$$
for some constant $A_1>0$. We will use same notations as before, in particular, the poles are denoted by $a_j$ arranged in a way such that $\cdots\leq |a_j|\leq |a_{j+1}|\leq\cdots$, and the component of $f^{-1}(B(R))$ containing $a_j$ are denoted by $U_j$. Put $b_j=\res(f,a_j)$. So we have, by using Theorem \ref{koebe} in the same way as before,
$$D\left(a_j, \frac{|b_j|}{4\sqrt{R}}\right)\subset U_j \subset D\left(a_j, \frac{2|b_j|}{\sqrt{R}}\right).$$
Now instead of considering a sequence of annuli as in the lower bound estimate in Section \ref{generalcase}, we consider a sequence of squares symmetric to the positive real axis. More precisely, we consider for $s(\geq R)$ and large $R$ the following squares
$$P(s):=\left\{z=x+iy:\, s<x<2s,\,|y|<\frac{s}{2}\, \right\}.$$
For $s\geq R$ we have $P(s)\subset B(R)$. We need to count the number of poles in $P(s)$. For this purpose, we count first the number of poles in
$$Q(s):=\left\{z=x+i y:\, s<x<2s,\,|y|<\pi \,\right\},$$
which is a subset of $P(s)$. This can be obtained by comparing the area of $\exp(Q(s))$ with that of a parallelogram for the function $\wp$. Denote by $n(P(s),\infty)$ and respectively $n(Q(s),\infty)$ the number of poles (ignoring multiplicities) in $P(s)$ and resp. $Q(s)$. Then, for large $s$,
$$n(Q(s),\infty)=\frac{\pi(e^{4s}-e^{2s})}{C}\sim A_2\,e^{4s}$$
for some constant $A_2>0$. Here $C$ is the area of a fundamental parallelogram of the function $\wp$. So we have for large $s$, by the periodicity of $f$,
$$n(P(s),\infty)\sim A_3\,se^{4s},$$
where $B_3>0$ is a constant.

With the above estimates, by repeating techniques in previous section, we may choose, for some constants $A_4,\,A_5$,
$$d_{l}=\left(\frac{A_4}{e^{R}R^{3/2}}\right)^{l}$$
and
$$\Delta_l=\frac{A_5}{R^2}.$$
Thus by applying Theorem \ref{mcmlower}, we obtain
$$\dim\j_R(f)\geq 2-\limsup_{R\to\infty}\frac{\log A_5 - 2\log R}{\log A_4-R-(3/2)\log R}=2.$$
As noted above, to obtain the Hausdorff dimension of the escaping set of $f$, we need to consider an increasing sequence $(R_k)$ tending to infinity and get similar estimates as before. This is a repeat of the previous argument. We omit details here and finally draw the following conclusion:
$$\dim\I(f)=2.$$

\begin{remark}
	The escaping set of the above function has zero Lebesgue measure by \cite[Theorem 1.3]{bergweiler2}.
\end{remark}

\begin{remark}
It is proved in \cite{kotus2} that if a meromorphic function $f$ is of the form $R(e^z)$, where $R$ is any rational function chosen such that $\infty$ is not an asymptotic value of $f$, then $\dim\I(f)=q/(q+1)<1$ with $q$ the maximal multiplicity of poles. Here the function $\wp(e^z+c)$ suggests that the Hausdorff dimension of the escaping set can be large if one takes transcendental functions instead of rational functions.
\end{remark}

\bigskip
\noindent Magnus Aspenberg

\smallskip
\noindent\emph{Centre for Mathematical Sciences, Lund University, Box 118, 22 100 Lund, Sweden}

\noindent\emph{magnus.aspenberg@math.lth.se}

\bigskip
\noindent Weiwei Cui

\smallskip
\noindent\emph{Shanghai Center for Mathematical Sciences, Fudan University, No. 2005
Songhu Road, Shanghai 200438, China;}

\smallskip
\noindent\emph{Centre for Mathematical Sciences, Lund University, Box 118, 22 100 Lund, Sweden}

\noindent\emph{weiwei.cui@math.lth.se}


\begin{thebibliography}{McM87}

\bibitem[AB12]{aspenberg1}
M.~Aspenberg and W.~Bergweiler.
\newblock Entire functions with {J}ulia sets of positive measure.
\newblock {\em Math. Ann.}, 352(1):27--54, 2012.

\bibitem[Ahl06]{ahlfors8}
L.~V. Ahlfors.
\newblock {\em Lectures on Quasiconformal Mappings}, volume~38 of {\em
  University Lecture Series}.
\newblock American Mathematical Society, Providence, RI, second edition, 2006.

\bibitem[Bar08]{baranski1}
K.~Bara\'nski.
\newblock Hausdorff dimension of hairs and ends for entire maps of finite
  order.
\newblock {\em Math. Proc. Cambridge Philo. Soc.}, 145(3):719--737, 2008.

\bibitem[Ber93]{bergweiler1}
W.~Bergweiler.
\newblock Iteration of meromorphic functions.
\newblock {\em Bull. Amer. Math. Soc.}, 29(2):151--188, 1993.

\bibitem[BF14]{branner3}
B.~Branner and N.~Fagella.
\newblock {\em Quasiconformal Surgery in Holomorphic Dynamics}, volume 141 of
  {\em Cambridge Studies in Advanced Mathematics}.
\newblock Cambridge University Press, Cambridge, 2014.

\bibitem[Bis15a]{bishop2}
C.~J. Bishop.
\newblock Models for the {E}remenko-{L}yubich class.
\newblock {\em J. London Math. Soc. (2)}, 93(1):202--221, 2015.

\bibitem[Bis15b]{bishop6}
C.~J. Bishop.
\newblock The order conjecture fails in {S}.
\newblock {\em J. Anal. Math.}, 127:283--302, 2015.

\bibitem[Bis17]{bishop5}
C.~J. Bishop.
\newblock Models for the {S}peiser class.
\newblock {\em Proc. Lond. Math. Soc. (3)}, 114(5):765--797, 2017.

\bibitem[BK12]{bergweiler2}
W.~Bergweiler and J.~Kotus.
\newblock On the {H}ausdorff dimension of the escaping set of certain
  meromorphic functions.
\newblock {\em Trans. Amer. Math. Soc.}, 364(10):5369--5394, 2012.

\bibitem[BKS09]{bergweiler10}
W.~Bergweiler, B.~Karpi\'nska, and G.~M. Stallard.
\newblock The growth rate of an entire function and the {H}ausdorff dimension
  of its {J}ulia set.
\newblock {\em J. London Math. Soc. (2)}, 80(3):680--698, 2009.

\bibitem[BRS08]{bergweiler3}
W.~Bergweiler, P.~J. Rippon, and G.~M. Stallard.
\newblock Dynamics of meromorphic functions with direct or logarithmic
  singularities.
\newblock {\em Proc. London Math. Soc. (3)}, 97(2):368--400, 2008.

\bibitem[Cui19]{cuiwei2}
W.~Cui.
\newblock Hausdorff dimension of escaping sets of {N}evanlinna functions.
\newblock {\em International Mathematics Research Notices,
  doi:10.1093/imrn/rnz152}, page to appear, online, 2019.

\bibitem[Cui20]{cuiwei1}
W.~Cui.
\newblock Lebesgue measure of escaping sets of entire functions.
\newblock {\em Ergodic Theory Dynam. Systems}, 40(1):89--116, 2020.

\bibitem[Dom98]{dominguez1}
P.~Dom\'{\i}nguez.
\newblock Dynamics of transcendental meromorphic functions.
\newblock {\em Ann. Acad. Sci. Fenn. Math.}, 23(1):225--250, 1998.

\bibitem[EL92]{eremenko2}
A.~Eremenko and M.~Lyubich.
\newblock Dynamical properties of some classes of entire functions.
\newblock {\em Ann. Inst. Fourier (Grenoble)}, 42(4):989--1020, 1992.

\bibitem[Ere89]{eremenko3}
A.~Eremenko.
\newblock On the iteration of entire functions.
\newblock In {\em Dynamical systems and ergodic theory ({W}arsaw, 1986)},
  volume~23 of {\em Banach Center Publ.}, pages 339--345. PWN, Warsaw, 1989.
  
 \bibitem[ERG15]{epstein5}
A.~L. Epstein and L.~Rempe-Gillen.
\newblock On invariance of order and the area property for finite-type entire
  functions.
\newblock {\em Ann. Acad. Sci. Fenn. Math.}, 40(2):573--599, 2015.

\bibitem[GK16]{kotus6}
P.~Ga\l{}azka and J.~Kotus.
\newblock Hausdorff dimension of sets of escaping points and escaping
  parameters for elliptic functions.
\newblock {\em Proc. Edinb. Math. Soc. (2)}, 59(3):671--690, 2016.

\bibitem[GK18]{kotus2}
P.~Ga\l{}azka and J.~Kotus.
\newblock Escaping points and escaping parameters for singly periodic
  meromorphic maps: {H}ausdorff dimensions outlook.
\newblock {\em Complex Var. Elliptic Equ.}, 63(4):547--568, 2018.

\bibitem[GO08]{goldbergmero}
A.~A. Goldberg and I.~V. Ostrovskii.
\newblock {\em Value Distribution of Meromorphic Functions}, volume 236 of {\em
  Translations of Mathematical Monographs}.
\newblock American Mathematical Society, Providence, RI, 2008.

\bibitem[Hay64]{hayman1}
W.~K. Hayman.
\newblock {\em Meromorphic Functions}.
\newblock Oxford Mathematical Monographs. Clarendon Press, Oxford, 1964.

\bibitem[LV73]{lehto1}
O.~Lehto and K.~I. Virtanen.
\newblock {\em Quasiconformal Mappings in the Plane}, volume 126.
\newblock Springer-Verlag, New York-Heidelberg, second edition, 1973.

\bibitem[McM87]{mcmullen11}
C.~T. McMullen.
\newblock Area and{ Hausdorff dimension of Julia} sets of entire functions.
\newblock {\em Trans. Amer. Math. Soc.}, 300(1):329--342, 1987.

\bibitem[Nev53]{nevanlinna6}
R.~Nevanlinna.
\newblock {\em Eindeutige analytische {F}unktionen}.
\newblock Die Grundlehren der mathematischen Wissenschaften in
  Einzeldarstellungen mit besonderer Ber\"ucksichtigung der Anwendungsgebiete,
  Bd XLVI. Springer-Verlag, Berlin-G\"ottingen-Heidelberg, 1953.

\bibitem[Pom92]{pommerenke1}
Ch. Pommerenke.
\newblock {\em Boundary Behaviour of Conformal Maps}, volume 299 of {\em
  Grundlehren der mathematischen Wissenschaften}.
\newblock Springer-Verlag, Berlin, 1992.

\bibitem[RS99]{rippon6}
P.~J. Rippon and G.~M. Stallard.
\newblock Iteration of a class of hyperbolic meromorphic functions.
\newblock {\em Proc. Amer. Math. Soc.}, 127(11):3251--3258, 1999.

\bibitem[RS10]{rempe11}
L.~Rempe and G.~M. Stallard.
\newblock Hausdorff dimensions of escaping sets of transcendental entire
  functions.
\newblock {\em Proc. Amer. Math. Soc.}, 138(5):1657--1665, 2010.

\bibitem[Sch07]{schubert}
H.~Schubert.
\newblock {\"Uber} die {Hausdorff-Dimension der Juliamenge von Funktionen
  endlicher Ordnung}.
\newblock {\em Dissertation, University of Kiel}, 2007.

\bibitem[Six18]{sixsmith6}
D.~J. Sixsmith.
\newblock Dynamics in the {E}remenko-{L}yubich class.
\newblock {\em Conform. Geom. Dyn.}, 22:185--224, 2018.

\bibitem[Tei37a]{teichmuller3}
O.~Teichm\"uller.
\newblock Eine {A}nwendung quasikonformer {A}bbildungen auf das {T}ypenproblem.
\newblock {\em Deutsche Math.}, 2:321--327, 1937.

\bibitem[Tei37b]{teichmuller2}
O.~Teichm\"uller.
\newblock Eine {U}mkehrung des zweiten {H}auptsatzes der
  {W}ertverteilungslehre.
\newblock {\em Deutsche Math.}, 2:96--107, 1937.



\end{thebibliography}
\end{document}